\documentclass{amsart} 
\usepackage[english]{babel}
\usepackage[utf8]{inputenc}

\usepackage{amsmath}
\usepackage{amsthm}
\usepackage{amssymb}
\usepackage{tikz}
\usepackage{csquotes}
\usepackage{imakeidx} 
\usepackage{appendix}
\usepackage{tikz-cd}
\usepackage{verbatim}
\usepackage{enumitem}
\usepackage{multicol}
\usepackage{aliascnt}
\usepackage[backref=page]{hyperref}
\usepackage{cleveref}
\usepackage{mathtools}
\usepackage[normalem]{ulem}
\usepackage[a4paper,margin=2.5cm,footskip=35pt]{geometry}

\hypersetup{colorlinks=true,linkcolor=blue,anchorcolor=blue,citecolor=blue}

\newtheorem{thm}{Theorem}[section]
\crefname{thm}{Theorem}{Theorems} 

\newaliascnt{prop}{thm}
\newtheorem{prop}[prop]{Proposition}
\aliascntresetthe{prop}
\crefname{prop}{Proposition}{Propositions}

\newaliascnt{lemma}{thm}
\newtheorem{lemma}[lemma]{Lemma}
\aliascntresetthe{lemma}
\crefname{lemma}{Lemma}{Lemmas}

\newaliascnt{cor}{thm}
\newtheorem{cor}[cor]{Corollary}
\aliascntresetthe{cor}
\crefname{cor}{Corollary}{Corollaries}

\newaliascnt{conj}{thm}

\aliascntresetthe{conj}
\crefname{conj}{Conjecture}{Conjectures}

\newaliascnt{question}{thm}

\aliascntresetthe{question}
\crefname{question}{Question}{Questions}

\theoremstyle{definition}
\newaliascnt{defin}{thm}
\newtheorem{defin}[defin]{Definition}
\aliascntresetthe{defin}
\crefname{defin}{Definition}{Definitions}

\theoremstyle{definition}              
\newaliascnt{nota}{thm}                
      
\aliascntresetthe{nota}                
\crefname{nota}{Notation}{Notations}

\newaliascnt{rmk}{thm}
\newtheorem{rmk}[rmk]{Remark}
\aliascntresetthe{rmk}
\crefname{rmk}{Remark}{Remarks}

\numberwithin{equation}{section}

\newcommand{\Z}{\mathbb Z}

\renewcommand{\P}{\mathbb P}
\newcommand{\Spec}{\operatorname{Spec}}

\newcommand{\mc}[1]{\mathcal{#1}}
\newcommand{\cl}{\overline}

\renewcommand{\phi}{\varphi}

\newcommand{\on}[1]{\operatorname{#1}}

\newcommand{\RC}{\mathrm{RC}}
\newcommand{\RCE}{\mathrm{RCE}}

\title{Comb smoothing and local triviality of homogeneous spaces over a relative curve}

\address{Institut Camille Jordan, Université Claude Bernard Lyon 1, Bâtiment Jean Braconnier 21 Av. Claude Bernard, 69100 Villeurbanne and Institute of Mathematics \enquote{Simion Stoilow} of the Romanian Academy, 21
Calea Grivitei Street, 010702 Bucharest, Romania}

\author{Margot Bruneaux}
\email{bruneaux@math.univ-lyon1.fr}

\thanks{The first author was supported by the project \enquote{Group schemes, root systems, and related representations} funded by the European Union - NextGenerationEU through Romania’s National Recovery
and Resilience Plan (PNRR) call no. PNRR-III-C9-2023-I8, Project CF159/31.07.2023, and coordinated by the Ministry of Research, Innovation and Digitalization (MCID) of Romania, and by the LABEX MILYON (ANR-10-LABX-0070) of Université de Lyon, within the program \enquote{France 2030} (ANR-11-IDEX-0007) operated by the French National Research Agency (ANR)}

\author{Federico Scavia}

\makeatletter
\@namedef{subjclassname@2020}{\textup{2020} Mathematics Subject Classification}
\makeatother

\definecolor{MyDarkGreen}{rgb}{0.0,0.5,0.0}

\address{CNRS\\ Institut Galil\'ee \\ Universit\'e Sorbonne Paris Nord \\ 93430, Villetaneuse, France}
\email{scavia@math.univ-paris13.fr}
\subjclass[2020]{
Primary 14L15, 14H10;
Secondary 14G20, 14M22, 14M17, 14F22, 16K50}

\keywords{Torsors, reductive group schemes, Henselian local rings, comb smoothing,
local-global principles, projective homogeneous spaces}

\begin{document}

\begin{abstract}
Let $R$ be a Henselian local ring, let $\kappa$ be the residue field of $R$, let $C$ be a smooth projective curve over $R$ with geometrically connected fibers, let $G$ be a reductive $C$-group with isotrivial radical torus $\mathrm{rad}(G)$, and let $E\to C$ be a $G$-torsor. We show that, if either the kernel of the central isogeny $G^{\mathrm{sc}}\times_C \mathrm{rad}(G)\to G$ is \'etale over $C$ or $\kappa$ is large, the Zariski-local triviality of $E_\kappa\to C_\kappa$ implies the Zariski-local triviality of $E\to C$. We also prove an averaged form of this result, assuming only that $\mathrm{rad}(G)$ is isotrivial, as well as a variant for projective homogeneous spaces under no restrictions on $G$. As consequences, we obtain a local-global principle for torsors over function fields of curves over Henselian discrete valuation rings, strengthening work of Gille--Parimala--Suresh and a Henselian version of a theorem of Drinfeld--Simpson. Our proofs are geometric and rely on compactifications of torsors and on a relative and arithmetic version of the comb smoothing technique, which we develop in detail, building on work of Koll\'ar and Graber--Harris--Starr.
\end{abstract}

\vspace*{-0.5cm}
\maketitle

\vspace*{-0.5cm}
\tableofcontents

\section{Introduction}

Let $C$ be a smooth projective relative curve over a scheme $S$, and let $G$ be a reductive group scheme over $C$. A natural question is whether $G$-torsors over $C$ become Zariski-locally trivial after passage to a cover $S' \to S$ in some Grothendieck topology. In the \'etale setting, this statement is related to uniformization results for the stack of $G$-torsors over a curve; in the Nisnevich setting, it provides a key geometric input for local-global principles for torsors over semi-global function fields.

\subsection{Local triviality and uniformization} The starting point is the work of Beauville--Laszlo \cite{beauville1994conformal}: given a section $\Sigma\subset C$ of $C\to S$, they proved \cite[Lemma 3.5]{beauville1994conformal} that every $\on{SL}_n$-torsor on $C$ is trivial on $C\setminus \Sigma$ Zariski-locally on $S$, and then used this to prove the uniformization of moduli spaces of vector bundles with trivial determinant over a curve. Drinfeld--Simpson
\cite{drinfeld1995b-structures} extended the uniformization statement to split semisimple groups $G$: given a section of $C\to S$, every $G$-torsor becomes trivial on the complement of the section after a suitable fppf base change $S'\to S$; if the order of $\pi_1(G)$ is invertible on $S$, then one may take $S'\to S$ to be \'etale. This was extended by Heinloth
\cite{heinloth2010uniformization}. For a parahoric group scheme
$G$ over a fixed curve $C_0$ over a field $k$, Heinloth proved that if $x\in C_0$ is a closed point,
then every family of $G$-torsors over $C_0\times_k S$ becomes trivial on
$(C_0\setminus\{x\})\times_k S'$, for some faithfully flat base change $S'\to S$. The theorems of Beauville--Laszlo, Drinfeld--Simpson and Heinloth underlie the one-point uniformization theorem for the stack of $G$-torsors over a curve \cite[Theorem 4.1.6]{zhu2017introduction}, and show how questions about the local triviality of
torsors fit in a broader framework involving loop groups, patching, and moduli
stacks of bundles.

For an arbitrary reductive $C$-group $G$, one cannot expect the $G$-torsor $E\to C$ to become trivial on the complement of a given section $\Sigma$ of $C\to S$, not even after a faithfully flat base change $S'\to S$, as can already be seen for the case of $\mathbb{G}_m$-torsors (equivalently, line bundles) on constant $1$-parameter families of elliptic curves. In the arbitrary reductive case, the most natural replacement of this condition that one can aim for is Zariski-local triviality. This is already considered in the work of Drinfeld--Simpson, who show that, for a split reductive group $G$, every $G$-torsor on $C$ admits a reduction to a Borel subgroup of $G$ after a suitable surjective \'etale base change on $S$, and hence in particular becomes Zariski-locally trivial after such a base change.

\subsection{Local triviality over a Henselian base} There is also a finer local triviality problem. Let $C\to S$ be a
smooth projective curve with geometrically connected fibers, let $G$ be a reductive $C$-group, let $s\in S$, and let $E$ be a $G$-torsor on $C$. If the fiber $E_s\to C_s$ is Zariski-locally trivial, does this local triviality spread to a neighborhood of $s$ in a suitable Grothendieck topology? For
this question, the natural topology on the base scheme $S$ is not the \'etale topology but the Nisnevich topology. Indeed, an \'etale neighborhood of $s$ may replace
the residue field $\kappa(s)$ by a finite separable extension, potentially adding
new sections, while a Nisnevich neighborhood preserves
the residue field at $s$. Moreover, considering the Nisnevich topology is relevant for local-global principles over semi-global fields. Indeed, if $R$ is a Henselian discrete valuation ring and $C\to\operatorname{Spec}R$ is a regular proper curve with function
field $F$, then local-global questions for $G_F$-torsors are governed by the
relation between the generic fiber, the special fiber, and the completions of
$F$. A Nisnevich-local triviality theorem for torsors on the model $C$ is
therefore a natural geometric input for proving local-global principles over semi-global function fields: this is the perspective taken by Gille--Parimala--Suresh \cite{Gille2021}.

Since the question is Nisnevich-local at the point $s\in S$, we may
pass to the Henselization of the local ring of $S$ at $s$. The problem then
takes the following form. Let $R$ be a Henselian local ring with
residue field $\kappa$, let $C\to\operatorname{Spec}R$ be a smooth projective
relative curve, and let $E$ be a $G$-torsor over $C$. If the closed fiber
$E_\kappa\to C_\kappa$ is Zariski-locally trivial, does it follow that $E$ is Zariski-locally trivial? 

Let $\on{rad}(G)$ be the radical torus of $G$, let $G^{\mathrm{sc}}$ be the simply connected cover of the derived subgroup of $G$, and let $\mu(G)$ be the kernel of the central isogeny $G^{\mathrm{sc}}\times_C \mathrm{rad}(G)\to G$, which is a finite flat $C$-group. Under the assumption that $\mu(G)$ is \'etale and that $\mathrm{rad}(G)$ becomes quasi-split after a finite \'etale morphism $C'\to C$ of degree invertible in $\kappa$, Gille--Parimala--Suresh \cite[Theorem 7.2]{Gille2021} prove that $E\to C$ is Zariski-locally trivial. As a consequence, they prove the local-global principle for $G$-torsors over a Henselian discrete valuation ring in many new cases; see \cite[Corollary 7.8]{Gille2021}. 

The assumption that $\mu(G)$ be \'etale is a mild restriction on the residue
characteristic. In the semisimple case it amounts to requiring that the order of
$\pi_1(G_\kappa)$ be invertible in $\kappa$, and is the Nisnevich analogue of the
corresponding hypothesis in Drinfeld--Simpson's \'etale-local uniformization
theorem. By contrast, the hypothesis on the radical torus is of a different
nature. Gille--Parimala--Suresh assume that the torus $\mathrm{rad}(G)$ becomes quasi-split after a finite \'etale cover $\widetilde C\to C$ of degree prime to
the residue characteristic. This condition enters through their treatment of the toral case \cite[Theorem 2.4]{Gille2021} via the proper base change theorem in \'etale cohomology.

\subsection{Main results} 

In this paper, we study the problem of Nisnevich-local triviality using geometric methods. Our first theorem is the following strengthening of the main result of \cite{Gille2021}.

\begin{thm}\label{thm:main}
  Let $R$ be a Henselian local ring with residue field $\kappa$, and let
  $C$ be a smooth projective $R$-curve with geometrically connected
  fibers, and let $G$ be a reductive $C$-group whose radical
  $\operatorname{rad}(G)$ is isotrivial and such that the natural central
  isogeny $G^{\mathrm{sc}}\times_C \operatorname{rad}(G)\to G$
  has \'etale kernel. If two $G$-torsors $E_1$ and $E_2$ over $C$ are
  Zariski-locally isomorphic over the special fiber $C_\kappa$, then they
  are Zariski-locally isomorphic over $C$.
\end{thm}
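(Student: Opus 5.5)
We first reduce to a statement about a single torsor. The sheaf $\mathcal{I}=\underline{\mathrm{Isom}}_G(E_1,E_2)$ is a torsor under the inner form $G'={}^{E_1}G$ of $G$. Twisting by a $G$-torsor preserves the radical up to twisting by the (trivial) inner action, so $\mathrm{rad}(G')\cong\mathrm{rad}(G)$ and $\mu(G')\cong\mu(G)$. Hence $G'$ satisfies the same hypotheses as $G$. Moreover, $E_1$ and $E_2$ are Zariski-locally isomorphic over an open $U\subset C$ exactly when $\mathcal{I}|_U$ has a section. It therefore suffices to prove that a $G$-torsor $E\to C$ whose restriction $E_\kappa\to C_\kappa$ is Zariski-locally trivial is itself Zariski-locally trivial.

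Since $C\to\Spec R$ is proper and every closed point of $C$ lies over the closed point, we only need, for each closed point $x\in C_\kappa$, a section of $E$ over an open neighbourhood of $x$. Standard limit arguments let us assume $R$ is the Henselization of a local ring of finite type over $\Z$, so that everything is Noetherian. We then split $G$ via the central isogeny
\[
1\to\mu(G)\to G^{\mathrm{sc}}\times_C\mathrm{rad}(G)\to G\to 1 .
\]
The plan has three parts.

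\textbf{Toral part.} Let $T=\mathrm{rad}(G)$ and let $E\mapsto E/[G,G]$ give a $G/[G,G]$-torsor. We show that an isotrivial torus torsor over $C$ that is Zariski-locally trivial on $C_\kappa$ is Zariski-locally trivial on $C$. Choose a finite étale Galois cover $C'\to C$ splitting $T$; Henselianity allows us to arrange that $C'$ is again a relative curve over a finite étale $R$-algebra. For split tori the statement concerns line bundles, which are always Zariski-locally trivial. For the general isotrivial case we embed $T$ into a quasi-trivial torus $\mathrm{R}_{C'/C}\mathbb{G}_m^n$ with torus cokernel, and lift generic sections. For the resulting coboundary we use that $\mathrm{Pic}$ of semilocal schemes vanishes.

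\textbf{Geometric part.} This is the main step. We compactify the torsor and apply comb smoothing. Choose a $G$-equivariant projective compactification $\overline{E}$ of $E$ over $C$, or, if that is unavailable, a projective homogeneous-space bundle such as $E/P$ for a parabolic $P$. Zariski-local triviality of $E_\kappa$ at $x$ gives a section of $E_\kappa$ over a neighbourhood of $x$, and hence a section $\sigma_0\colon C_\kappa\to\overline{E}_\kappa$ passing through the open orbit near $x$. Rationally connected fibers let us attach many very free rational teeth, forming a comb in the fibers over general points. Using the relative, arithmetic comb smoothing developed later in the paper, following Koll\'ar and Graber--Harris--Starr, we smooth this comb to a section $\sigma'_0$ with $H^1(C_\kappa,\sigma_0'^*T_{\overline{E}/C}(-D))=0$ for a suitable divisor $D$. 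Since this obstruction group vanishes, deformation theory over the Henselian ring $R$, via Hensel lifting of sections of the smooth Hom-scheme, lifts $\sigma'_0$ to a section $\sigma\colon C\to\overline{E}$. By openness, $\sigma$ lands in $E$ near $x$. I expect this step to be the hardest, for three reasons: keeping $\kappa$-rationality when $\kappa$ is finite, arranging that the smoothed section stays in the open orbit at $x$, which is why the teeth are fixed by $D$ through $x$, and handling the characteristic $p$ very-freeness of the fibers.

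\textbf{Combining.} Applying the geometric part to the $G^{\mathrm{sc}}$-part reduces the structure group near $x$. The image of $E$ in $H^1(-,\mu(G))$ is controlled by étaleness of $\mu(G)$: étale cohomology of the Henselian pair $(C_{x}^{h},\ldots)$ agrees with that of the closed fiber by proper base change. Combined with the toral part, this allows $E$ near $x$ to be lifted to $G^{\mathrm{sc}}\times T$. The simply connected factor is then handled by the geometric argument, and the torus factor by the toral part.
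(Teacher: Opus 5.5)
\paragraph{The toral part does not work.} Your reduction to a single torsor by twisting is fine and matches the paper. So does your geometric step: compactify $E$, smooth a comb whose handle is a section of $\overline{E}_\kappa$, then lift by Hensel's lemma on the Weil restriction. The genuine gap is the toral part. Choose $1\to T\to P\to Q\to 1$ with $P$ quasi-trivial, and use that $\mathrm{Pic}$ of semilocal schemes vanishes. This only gives
\[
H^1(\mathcal{O}_{C,x},T)\cong Q(\mathcal{O}_{C,x})/\pi\bigl(P(\mathcal{O}_{C,x})\bigr).
\]
Lifting units from $\mathcal{O}_{C_\kappa,x}$ then leaves some $q\in Q(\mathcal{O}_{C,x})$ with $q\equiv 1$ modulo $\mathfrak{m}_R$, which you must lift to $P(\mathcal{O}_{C,x})$. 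That is a Hensel property of the pair $(\mathcal{O}_{C,x},\mathfrak{m}_R\mathcal{O}_{C,x})$, and it fails in general. Henselianity of $R$ can only be exploited globally on the proper scheme $C$, not on its local rings.

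None of your toral steps really uses that $R$ is Henselian, and without it the statement is false. Take $R=\mathbb{Z}_{(5)}$, $R'=R[i]$ (étale over $R$), $T=\mathrm{R}^1_{R'/R}\mathbb{G}_m$, and the $T$-torsor $\{N_{R'/R}(z)=11\}$ pulled back to $C=\mathbb{P}^1_R$. It is trivial on $C_\kappa$, since $R'\otimes_R\mathbb{F}_5\cong\mathbb{F}_5\times\mathbb{F}_5$. It is nontrivial at the generic point, since $11$ is not a norm from $\mathbb{Q}(i)(t)$.

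The paper gets the toral case globally. Comb smoothing on $\overline{E}$ and Hensel lifting on $\mathrm{Res}_{C/R}(\overline{E})$ trivialize $E$ over a Zariski cover $(U_j)$, but only after finite étale $R_i$ of collectively coprime degrees (\Cref{thm:main-average}). Since $H^1(U_j,T)$ is abelian, restriction--corestriction gives $[R_i:R]\,\eta_j=0$ for all $i$, hence $\eta_j=0$.

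\paragraph{The averaging also breaks your plan for $G^{\mathrm{sc}}$.} Smoothing the comb produces sections only at closed points of the parameter curve, whose residue fields are finite over $\kappa$. A $\kappa$-rational one need not exist unless $\kappa$ is large, and this affects every non-large field, not only finite ones. So your geometric argument makes a $G^{\mathrm{sc}}$-torsor Zariski-locally trivial only over coprime-degree extensions. For non-abelian $H^1$ there is no corestriction to descend. The paper does not treat $G^{\mathrm{sc}}$ geometrically here; it uses Gille--Parimala--Suresh for $K_{\mathrm{sc}}=\ker\bigl(H^1(C,G^{\mathrm{sc}})\to H^1(C_\kappa,G^{\mathrm{sc}}_\kappa)\bigr)$.

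\paragraph{The $\mu(G)$-step must be global.} Proper base change applies to $C\to\Spec R$ and gives $H^i(C,\mu(G))\cong H^i(C_\kappa,\mu(G)_\kappa)$. It says nothing about a local scheme at $x$, and the obstruction to lifting lives in $H^2$, not $H^1$. The paper first treats $E$ trivial on $C_\kappa$. Then its $H^2$-class vanishes, and a lift to $G^{\mathrm{sc}}\times_C\mathrm{rad}(G)$ can be corrected by a class in $H^1(C,\mu(G))$ so that it lies in $K_{\mathrm{sc}}\times K_{\mathrm{rad}}$, with $K_{\mathrm{rad}}$ the analogous kernel for $\mathrm{rad}(G)$. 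It then passes to the Zariski-locally trivial case by the argument of \cite[Theorem~7.2]{Gille2021}.
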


Recall that every torus over a geometrically unibranch locally Noetherian scheme is isotrivial; see \cite[Expos\'e X, Th\'eor\`eme 5.16]{SGA3IInew}. Therefore, under the assumptions of \Cref{thm:main}, if $R$ is Noetherian and geometrically unibranch, the radical torus $\on{rad}(G)$ is automatically isotrivial.

\Cref{thm:main} was proved by Gille--Parimala--Suresh \cite[Theorem 7.2]{Gille2021} when $\mathrm{rad}(G)$ becomes quasi-split over a finite \'etale extension $C'\to C$ of degree invertible in $\kappa$; note that this implies that $\mathrm{rad}(G)$ is isotrivial. In particular, \Cref{thm:main} is new already if $G$ is an isotrivial $C$-torus (in which case $\mu(G)$ is trivial). 

When $\kappa$ is large (that is, every $\kappa$-curve with a smooth $\kappa$-point has a Zariski-dense set of $\kappa$-points), our methods allow us to remove the assumption on $\mu(G)$.

\begin{thm}\label{thm:large-field}
  Let $R$ be a Henselian local ring with large residue field $\kappa$, and let
  $C$ be a smooth projective $R$-curve with geometrically connected
  fibers, and let $G$ be a reductive $C$-group whose radical
  $\operatorname{rad}(G)$ is isotrivial. If two $G$-torsors $E_1$ and $E_2$ over $C$ are
  Zariski-locally isomorphic over the special fiber $C_\kappa$, then they
  are Zariski-locally isomorphic over $C$.
\end{thm}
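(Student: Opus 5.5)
The plan is to reduce to the case of a single torsor and then to a question about sections of a compactification. Replace $G$ by the inner twist $G' = \underline{\mathrm{Aut}}_G(E_1)$, and replace the pair $(E_1,E_2)$ by the $G'$-torsor $E = \underline{\mathrm{Isom}}_G(E_1,E_2)$. The group $G'$ is again reductive. Its radical is a twist of $\mathrm{rad}(G)$ by a central-type inner twist, hence equal to $\mathrm{rad}(G)$, so it is still isotrivial. The hypothesis says $E_\kappa$ is Zariski-locally trivial, and we must show $E$ is Zariski-locally trivial. Since $C$ is a relative curve over a local base, every point of $C$ specializes to a closed point of $C_\kappa$. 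It therefore suffices, for each finite set $\Sigma$ of closed points of $C_\kappa$, to find an open $U\subset C$ containing $\Sigma$ on which $E$ is trivial. Equivalently, we need a section of $E$ over a neighborhood of $\Sigma$.

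Since $E_\kappa$ is Zariski-locally trivial, it is generically trivial, so we have a rational section $\sigma_0$ of $E_\kappa$ defined at the points of $\Sigma$. We would like to lift it to a section of $E$ over some open subset containing $\Sigma$. To do this, take a $G'$-equivariant projective compactification $G'\subset \overline{G'}$ (wonderful-type, or built from a faithful representation) and form the contracted product $X = E\times^{G'}\overline{G'}\to C$. This is a projective $C$-scheme with $E$ as a fiberwise dense open. The section $\sigma_0$ extends to a section $s_0\colon C_\kappa\to X_\kappa$, since $C_\kappa$ is a regular curve and $X$ is proper. It lands in $E_\kappa$ at the points of $\Sigma$.

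The key step is to deform $s_0$, after modification, to a section of $X\to C$ over $R$ that still meets $E$ at the points of $\Sigma$. This is the comb smoothing step. Attach to $s_0$ many rational "teeth" in fibers of $X_\kappa$ at points of $C_\kappa$ away from $\Sigma$ where $s_0$ lies in $E_\kappa$. These teeth are very free curves in the rationally connected fibers $\overline{G'}_x$, which are $\kappa$-rational near the identity. The resulting comb $s_0\cup\bigcup T_i$ is then a nodal curve, and we need the following:

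(i) for enough teeth, a subcomb is smoothable with $H^1(N)=0$ relative to $\Sigma$;

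(ii) the smoothing is unobstructed over $R$, so the comb deforms by Henselian lifting (Hensel's lemma / Artin-type approximation on the Hom-scheme, which is smooth at the comb point) to a section $s$ over $R$ whose special fiber is a smoothing near $s_0$;

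(iii) the points $s(\Sigma)$ stay in the open $E$, which is an open condition and holds because the teeth are away from $\Sigma$.

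Then $s$ restricted to $s^{-1}(E)$ is a section of $E$ over an open subset containing $\Sigma$, which is what we need.

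The main obstacle is the arithmetic part of (i)–(ii): producing the teeth and the smoothing over $\kappa$ rather than $\overline\kappa$. One needs $\kappa$-rational attachment points and $\kappa$-rational very free rational curves through them, and the smoothed section must be defined over $\kappa$ and lie in a smooth locus of the Hom-scheme. Here largeness of $\kappa$ is used. The relevant parameter spaces (points of $C_\kappa$, rational curves through the identity of $\overline{G'}_x$, and the component of the Hom-scheme containing the smoothed comb) are $\kappa$-varieties with smooth $\kappa$-points, so their $\kappa$-points are Zariski-dense and we can choose everything rationally and generically. This is exactly what replaces the étaleness hypothesis on $\mu(G)$ used in \Cref{thm:main}. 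That hypothesis would otherwise control the separability needed in the Beauville–Laszlo/Drinfeld–Simpson-style reduction for finite fields.

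Isotriviality of $\mathrm{rad}(G)$ is used in the construction of the compactification. The torus part is trivialized by a finite étale cover, which lets one build $\overline{G'}$ as a projective $C$-scheme with geometrically rationally connected smooth-locus fibers, by descending a toric compactification.
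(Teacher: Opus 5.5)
Your architecture matches the paper's. You twist to a single torsor, compactify via $\overline{E}=E\times^{G'}\overline{G'}$, extend the rational section to $s_0\colon C_\kappa\to\overline{E}_\kappa$, smooth a comb with vertical very free teeth over $\kappa$, take a $\kappa$-rational smoothed section, and Hensel-lift it. Replacing the paper's ``smoothing fixing $s|_Z$'' by openness of the condition $s(\Sigma)\subset E$ is fine. Work one point at a time, though: for a finite $\Sigma$, getting $\sigma_0$ regular along all of $\Sigma$ needs the semilocal Grothendieck--Serre theorem.

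\textbf{First gap: rational attachment points.} The place where you invoke largeness fails. You require $\kappa$-rational attachment points and assert that $C_\kappa$ has smooth $\kappa$-points. Nothing forces $C_\kappa(\kappa)\neq\varnothing$, and largeness says nothing about curves without rational points. For instance, $\kappa=\mathbb{R}$ is large, and a smooth conic over $\mathbb{R}[[t]]$ with pointless special fibre satisfies all hypotheses, yet your construction cannot attach a single tooth. The paper instead allows teeth $\mathbb{P}^1_{\kappa(z)}$ attached at closed points $z$ with $\kappa(z)/\kappa$ finite separable; the comb is still a $\kappa$-curve (\Cref{comb}). It finds such points carrying very free vertical curves in prescribed normal directions using only that $\kappa$ is infinite (\Cref{avf_passing_Z}). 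Largeness enters exactly once, at the end: the smoothing curve $T$ has the $\kappa$-point $t_0=[\mathcal{C}]$, hence $\kappa$-points $t\neq t_0$ in every dense open subset.

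\textbf{Second gap: what is lifted, and why it is unobstructed.} Step (ii) is asserted rather than proved, and as written it lifts at the wrong point. The comb is not a section, so Hensel's lemma must be applied at the $\kappa$-rational smoothed member $s'$, and you must show this point is smooth over $R$. The paper works in $\mathrm{Res}_{C/R}(\overline{E})$. By \Cref{smoothness_weil}, smoothness there means $H^1(C_\kappa,s'^*\mathcal{T}_{\overline{E}/C})=0$. This is exactly why the teeth must be very free \emph{in the fibres}: then $\mathcal{T}_{\overline{E}/C}$ is ample on them, although they are only almost very free in $\overline{E}$. They must also outnumber the Serre-vanishing bound for $s_0^*\mathcal{T}_{\overline{E}/C}$. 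Then \cite[Lemma~7.10.1]{kollar1996rational} gives the vanishing on a general member of the smoothing.

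If your ``Hom-scheme'' means a relative Hilbert scheme over $R$, smooth at $[\mathcal{C}]$ because $H^1(\mathcal{N}_{\mathcal{C}})=0$, that variant can be made to work. But the comb must then be embedded, which is why the paper works in $X\times\mathbb{P}^3\times\mathbb{P}^1$. And you still lift a $\kappa$-rational smooth member, not the comb.

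\textbf{Two further corrections.} First, the compactification must be \emph{smooth} over $C$, because $s_0$ meets the boundary wherever $\sigma_0$ is undefined. So closures in projectivized representations are unusable, and wonderful compactifications exist only for adjoint groups. The paper applies Nath's theorem to the quasi-split inner form, which is isotrivial because its radical is, and then twists.

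Second, your account of the \'etaleness of $\mu(G)$ is inaccurate. Without largeness, the comb argument yields only the averaged statement over extensions of coprime degrees (\Cref{thm:main-average}). \'Etaleness of $\mu(G)$ serves a d\'evissage, via proper base change for $H^i(C,\mu(G))$, to the simply connected case and the torus case. In the torus case, restriction--corestriction removes the averaging.
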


We also prove an averaged version of \Cref{thm:main}, assuming only that the radical torus of $G$ is isotrivial.

\begin{thm}\label{thm:main-average}
Let $R$ be a Henselian local ring with residue field $\kappa$, and let
  $C$ be a smooth projective $R$-curve with geometrically connected
  fibers, and let $G$ be a reductive $C$-group whose radical
  $\operatorname{rad}(G)$ is isotrivial. If two $G$-torsors $E_1$ and $E_2$ over $C$ are
  Zariski-locally isomorphic over the special fiber $C_\kappa$, then there exists a finite collection of finite \'etale Henselian local $R$-algebras $\{R_i\}_{i\in I}$ of collectively coprime degrees such that, for each $i \in I$, the base changes $(E_1)_{R_i}$ and $(E_2)_{R_i}$ are Zariski-locally isomorphic.
\end{thm}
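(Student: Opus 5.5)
We first reduce to a statement about a single torsor with a section problem. Two $G$-torsors $E_1,E_2$ are Zariski-locally isomorphic exactly when the $\underline{\mathrm{Aut}}(E_1)$-torsor $\underline{\mathrm{Isom}}(E_1,E_2)$ is Zariski-locally trivial. Twisting by $E_1$ gives a reductive $C$-group $G'={}^{E_1}G$ whose radical is a twist of $\mathrm{rad}(G)$ by inner automorphisms, hence equal to $\mathrm{rad}(G)$ and still isotrivial. So it suffices to treat the case $E_1$ trivial. Then we must show: if a $G$-torsor $E$ has $E_\kappa$ Zariski-locally trivial, then after finite étale Henselian local extensions $R_i$ of coprime degrees, $E_{R_i}$ is Zariski-locally trivial.

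\textbf{Criterion via the generic point of each fibre.} By Nisnevich-type arguments (the Grothendieck--Serre type statement for reductive groups over regular semilocal schemes, or directly the definition), $E$ is Zariski-locally trivial iff for every point $c\in C$ the torsor is trivial over $\mathcal O_{C,c}$. Using properness of $C$ over the Henselian $R$, I would reduce this to producing a section of $E$ over an open $U\subset C$ whose complement is finite over $\Spec R$ and avoids any prescribed finite set of closed points of $C_\kappa$. Every point of $C$ specializes to a closed point of $C_\kappa$, so such a $U$ contains the relevant local rings.

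\textbf{Geometric core: compactification and comb smoothing.} Choose a $G$-equivariant compactification $\overline{E}\to C$ of $E$, obtained as the twist of a smooth projective equivariant compactification of $G$ (wonderful/toroidal type, fibrewise over $C$). Then $\overline{E}\to C$ is a projective family whose fibres are smooth rationally connected varieties. Since $E_\kappa$ is Zariski-locally trivial, the closed fibre $E_\kappa\to C_\kappa$ has rational sections through general points. Each such section $\sigma_0$ lies inside $E_\kappa$ away from finitely many points. Using the relative arithmetic comb smoothing technique, we attach to $\sigma_0$ many rational teeth in fibres of $\overline E_\kappa\to C_\kappa$ and smooth a subcomb. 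The result is a section $\sigma$ of $\overline{E}_\kappa\to C_\kappa$ with $H^1(C_\kappa,N_\sigma(-D))=0$ for a large divisor $D$, meeting the boundary $\overline E\setminus E$ only at prescribed points and transversally. Unobstructedness of the section space lets us lift $\sigma$, via Hensel's lemma for the smooth $R$-scheme of sections (a Hom-scheme smooth at $[\sigma]$), to a section $\tilde\sigma$ of $\overline{E}\to C$ over $R$. Then $\tilde\sigma^{-1}(E)$ is an open $U$ of the required type, and $\tilde\sigma|_U$ trivializes $E$ there.

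\textbf{Main obstacle: producing the teeth.} The teeth must be rational curves in fibres $\overline{E}_{c}$ over closed points $c\in C_\kappa$. In the arithmetic setting these curves must be defined over $\kappa$, or rather over $\kappa(c)$, and with $\kappa$ not large we cannot ensure that enough $\kappa(c)$-points and very free $\kappa(c)$-rational curves exist. This is why only an averaged statement is expected. I would attach teeth defined over finite separable extensions $\kappa'/\kappa$. Choosing two families of such extensions of coprime degrees (e.g.\ of degrees $\ell^a$ for different primes, using that $\kappa$-points exist on the rational fibres near $\sigma_0$), we run the smoothing over each $\kappa'_i$. The corresponding finite étale Henselian local $R_i$ lift $\kappa'_i$, and for each of them the above argument gives Zariski-local triviality of $E_{R_i}$. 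The delicate part is arranging that the comb smoothing works over a non-closed field with teeth at Galois-stable configurations. I would try to handle it by taking teeth in Galois orbits, so that the comb is defined over $\kappa'_i$, and then invoking the paper's arithmetic comb smoothing result over $\kappa'_i$.
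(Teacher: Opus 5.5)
There is a genuine gap at the step where you say that smoothing the comb ``results in a section $\sigma$ of $\overline{E}_\kappa\to C_\kappa$'' defined over $\kappa$ (or over $\kappa'_i$). A smoothing is a flat family $Y\to T$ over a smooth connected $\kappa$-curve $T$. Its fibre over the distinguished point $t_0\in T(\kappa)$ is the comb itself. Only the fibres over a dense open $U\subset T\setminus\{t_0\}$ are sections $C_{\kappa(t)}\to \overline{E}_{\kappa(t)}$ with $H^1(C_{\kappa(t)},F_t^*\mathcal{T}_{\overline{E}/C})=0$. Unless $\kappa$ is large, nothing forces $U$ to contain a $\kappa$-point; over a number field or a finite field, $T$ may have only finitely many rational points. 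Passing to a finite extension $\kappa'_i$ and smoothing there reproduces exactly the same problem over $\kappa'_i$. This, not the teeth, is where the averaging comes from. The paper resolves it as follows. Since $T$ is a smooth curve with a rational point, $U$ contains closed points $t_i$ of collectively coprime degrees. By Gabber--Liu--Lorenzini these can be chosen with $\kappa(t_i)/\kappa$ separable. The fibres $Y_{t_i}$ give smooth $\kappa(t_i)$-points of $\mathrm{Res}_{C/R}(\overline{E})$, which Hensel's lemma lifts over finite \'etale Henselian $R_i$ with residue field $\kappa(t_i)$.

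Conversely, the teeth are not an obstruction. A comb may have teeth $\mathbb{P}^1_{\kappa(z)}$ attached at closed points $z$ with $\kappa(z)/\kappa$ separable. Smoothness of the tangent evaluation map on vertical very free curves, together with the density result of Gille--Parimala--Suresh, shows that such points carrying very free vertical teeth in prescribed directions are dense. So the comb and its smoothing live over $\kappa$ itself, with no need for Galois-orbit constructions over $\kappa'_i$.

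Three further points are missing from your plan:
\begin{enumerate}
\item The comb smoothing needs $\kappa$ infinite. For finite $\kappa$ the paper applies the large-field case to the $p$-closures $\kappa^{(p)}$, which are large, obtaining for each prime $p$ an extension of degree prime to $p$.
\item Each closed point $z\in C_\kappa$ yields its own family $\{R_{z,i}\}$, and finitely many such families must be combined. One way is to take the local factors of $\bigotimes_j R_{z_j,i_j}$, which still have collectively coprime degrees.
\item What the smoothing must preserve is the value of the section at $z$, so that the Hensel lift lands in the open $E$ near $z$. This is a smoothing fixing $s(Z)$, obtained from the Hilbert scheme of subschemes containing $s(Z)$ and the vanishing of $H^1$ of the restricted normal bundle twisted by $\mathcal{O}_D(-Z)$. ``Meeting the boundary only at prescribed points'' is not the relevant condition.
\end{enumerate}
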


Theorems \ref{thm:main} and \ref{thm:large-field} have the following consequence for the local-global principle over function fields of curves over Henselian discrete valuation rings.

\begin{cor}\label{cor:local-global}
Let $R$ be a Henselian discrete valuation ring with residue field
$\kappa$, let $C$ be a smooth projective $R$-curve with geometrically
connected fibers, and let $F$ be the function field of $C$. Let $G$ be a
reductive $C$-group.
Assume that at least one of the following conditions holds:
\begin{enumerate}
  \item[(i)] the isogeny $G^{\mathrm{sc}}\times_C \operatorname{rad}(G)\to G$ has \'etale kernel;
  \item[(ii)] the field $\kappa$ is large.
\end{enumerate}
Then the local-global principle holds for $G_F$-torsors over $F$ with
respect to the discrete valuations associated with the codimension-one
points of $C$.
\end{cor}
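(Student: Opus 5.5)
We deduce \Cref{cor:local-global} from \Cref{thm:main} and \Cref{thm:large-field}, following the strategy of \cite[Corollary 7.8]{Gille2021}. First note that, since $R$ is a discrete valuation ring, it is Noetherian, regular and hence geometrically unibranch, and $C$ is smooth over $R$, hence regular, hence normal. By \cite[Expos\'e X, Th\'eor\`eme 5.16]{SGA3IInew} the torus $\on{rad}(G)$ is therefore isotrivial. Thus the hypotheses of \Cref{thm:main} hold in case (i), and those of \Cref{thm:large-field} hold in case (ii).

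Now let $P$ be a $G_F$-torsor that is trivial over the completion $F_v$ for every discrete valuation $v$ attached to a codimension-one point of $C$. We aim to show that $P$ is trivial, in four steps.

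\emph{Step 1: spreading out.} Since $C$ is a regular scheme of dimension $2$ and $P$ is trivial at all codimension-one points, hence over their (Henselian or complete) local rings, we extend $P$ to a $G$-torsor $E$ over $C$. To do this, we glue $P$ with trivial torsors over the local rings of the codimension-one points, using the Beauville--Laszlo/Harbater-type patching available for the trivial torsor. This gives a torsor over an open $U\subset C$ whose complement is a finite set of closed points. We then extend across these codimension-two points using purity for reductive group torsors over regular local rings of dimension $2$: a torsor over the punctured spectrum of a regular local ring of dimension $2$ extends, as in Colliot-Th\'el\`ene--Sansuc. This is the step Gille--Parimala--Suresh also carry out, and I would cite their argument.

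\emph{Step 2: the special fiber.} Let $\eta$ be the generic point of $C_\kappa$. Since $C_\kappa$ is integral (the fibers are geometrically connected and smooth), $\eta$ is a codimension-one point of $C$. By hypothesis $E$ is trivial over $\widehat{\mathcal O}_{C,\eta}$, hence over its residue field $\kappa(C_\kappa)$. So $E_\kappa$ is generically trivial on the smooth curve $C_\kappa$. A generically trivial torsor over a smooth curve over a field is Zariski-locally trivial, since by Nisnevich's theorem for DVRs, triviality over the fraction field of a DVR implies triviality over the DVR. Hence $E_\kappa$ is Zariski-locally isomorphic to the trivial torsor.

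\emph{Step 3: lifting to $C$.} Applying \Cref{thm:main} or \Cref{thm:large-field} with $E_1=E$ and $E_2$ trivial, we conclude that $E$ is Zariski-locally trivial on $C$. In particular, $E$ is trivial at the generic point of $C$, so $P=E_F$ is trivial.

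The main obstacle is Step 1. It requires care in the codimension-two extension, and in the patching, which uses triviality over the completions $F_v$ rather than over the Henselian local rings; here I would rely on Artin approximation or the Harbater--Hartmann--Krashen framework, as in \cite{Gille2021}. If that extension is awkward, an alternative is to avoid extending over all of $C$: work instead with a torsor over $C$ minus finitely many closed points of the generic fiber, and use that \Cref{thm:main} only needs a neighborhood of the special fiber. This is because the Zariski-local triviality along the special fiber already yields a point of $C_\kappa$ around which $E$ is trivial, which suffices to get generic triviality.
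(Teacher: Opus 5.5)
Your proposal is correct and is essentially the paper's argument. The paper records that $\on{rad}(G)$ is isotrivial by \cite[Expos\'e X, Th\'eor\`eme 5.16]{SGA3IInew} and then reruns the proof of \cite[Corollary 7.8]{Gille2021} with \cite[Theorem 7.2]{Gille2021} replaced by the new lifting theorem; your Steps 1--3 spell this out, correctly using \Cref{thm:large-field} in case (ii) and correctly justifying isotriviality via the regularity of $C$, which is the base of the torus.

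A few minor points. In Step 2, triviality over $\widehat{\mathcal O}_{C,\eta}$ is not given ``by hypothesis''; it follows from triviality over $F_\eta$ by Nisnevich's theorem. The gluing in Step 1 is the usual Beauville--Laszlo patching over the DVR $\mathcal O_{C,v}$ and needs no Artin approximation. The fallback in your last paragraph is unnecessary, since closed points of the generic fiber are codimension-one points of $C$ and are already handled by that gluing. It would also not work as stated, because \Cref{thm:main} and \Cref{thm:large-field} concern torsors over the whole projective curve $C$.
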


When $R=\Z_p$ is the ring of $p$-adic integers, for some prime $p$, \Cref{cor:local-global} is due to Colliot-Th\'el\`ene, Parimala and Suresh; see \cite[Theorem 4.8]{colliot2012patching}. It was proved by Harbater--Hartmann--Krashen under the assumption that $G_F$ is $F$-rational; see \cite[Theorem 3.7]{harbater2009applications}. When $\mu(G)$ is \'etale and the radical torus of $G$ becomes quasi-split over a finite \'etale extension $C'\to C$ of degree invertible in $\kappa$, it is due to \cite[Corollary 7.8]{Gille2021}.

\subsection{Projective homogeneous spaces} 
Our methods also apply to relative projective homogeneous spaces. More generally, we prove the following result (see \Cref{cor_prop_principal_Weil}).

\begin{thm}\label{intro:cor_prop_principal_Weil}
Let $R$ be a Henselian local ring with residue field $\kappa$, let
$C$ be a smooth projective $R$-curve with geometrically connected fibers, and let $Z \subset C_{\kappa}$ be a subscheme finite over $\kappa$. Let $X$ be a smooth projective $R$-scheme, let $p\colon X\to C$ be an $R$-morphism, and assume that $p_\kappa$ admits a rational section. Denote by $s\colon C_\kappa\to X_\kappa$ its unique extension to a section. Assume moreover that there exists a nonempty open subscheme $C_\kappa^\circ\subset C_\kappa$ such that the restriction $p_\kappa^{-1}(C_\kappa^\circ)\to C_\kappa^\circ$ is smooth and has separably rationally connected fibers.
\begin{enumerate}
    \item There exists a finite collection of finite \'etale Henselian local $R$-algebras
$\{R_i\}_{i\in I}$ of collectively coprime degrees such that, for each $i\in I$, the base change $X_{R_i} \to C_{R_i}$ has a section $s_i$ such that $s_i|_{Z_{\kappa_i}} = s_{\kappa_i}|_{Z_{\kappa_i}}$, where $\kappa_i$ is the residue field of $R_i$.
    \item Assume further that the field $\kappa$ is large. Then there exists a section $s'$ of $p \colon X \to C$ such that ${s'}|_{Z}=s|_{Z}$.
\end{enumerate}
\end{thm}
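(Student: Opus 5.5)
We follow the comb smoothing strategy of Koll\'ar and Graber--Harris--Starr, carried out relatively over $\Spec R$. The aim is to deform the special-fiber section $s\colon C_\kappa\to X_\kappa$ to a section over $C$ with the prescribed values on $Z$. Since $R$ is Henselian, it suffices to produce a section $\sigma$ of $X_\kappa\to C_\kappa$ with three properties:
\begin{enumerate}
\item[(a)] $\sigma$ agrees with $s$ on $Z$, possibly after a finite separable extension of $\kappa$;
\item[(b)] $\sigma(C_\kappa)$ lies in the smooth locus of $p_\kappa$ away from finitely many points;
\item[(c)] $H^1(C_\kappa,\sigma^*T_{X_\kappa/C_\kappa}(-Z'))=0$ for a suitable effective divisor $Z'\supseteq Z$ through which we fix the section.
\end{enumerate}
Given such $\sigma$, the relative Hom-scheme (or Hilbert scheme) of sections of $X\to C$ with prescribed restriction to a lift of $Z$ is smooth over $R$ at $[\sigma]$. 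Hence $\sigma$ lifts to an $R$-point by Hensel's lemma. In case (1) we work over $R_i$, the finite \'etale local $R$-algebra with residue field $\kappa_i$ corresponding to the extension used in (a).

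\textbf{Step 1: making $s$ unobstructed.} First we move the points $Z$ and the bad locus off $C_\kappa\setminus C_\kappa^\circ$ as far as possible. Then we attach teeth: very free rational curves $T_j$ in fibers $X_{\kappa,c_j}$ over general points $c_j\in C^\circ_\kappa$, each meeting $s(C_\kappa)$ at $s(c_j)$. These exist because the fibers are separably rationally connected. The resulting comb $s(C_\kappa)\cup\bigcup T_j$ has normal sheaf that becomes arbitrarily positive as more teeth are added. Koll\'ar's comb smoothing lemma then shows that, for a suitable subcomb, a smoothing exists. This smoothing is a section $\sigma$ satisfying (c) and still agreeing with $s$ on $Z$, since the deformation is performed in the space of curves fixing $Z$.

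\textbf{Step 2: arithmetic.} The difficulty is that the points $c_j$ and the teeth must be defined over $\kappa$ (or over $\kappa_i$) for the construction to descend. If $\kappa$ is large, then $C_\kappa^\circ(\kappa)$ is Zariski dense, since $s$ supplies smooth rational points on suitable curves. Moreover, each fiber $X_{c_j}$ is separably rationally connected with a $\kappa$-point $s(c_j)$, so very free curves through $s(c_j)$ over $\kappa$ exist by largeness applied to curves in the parameter space of rational curves. This gives (2), with all choices made over $\kappa$.

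For general $\kappa$ we instead choose closed points $c_j$ with residue fields of degrees $d_j$, and build the comb over a finite separable extension. Choosing two families of points whose degrees are coprime gives extensions $\kappa_i$ of collectively coprime degrees. Over each $\kappa_i$ the same smoothing argument works, and these lift to the $R_i$ by Henselianity.

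\textbf{Main obstacle.} The hardest step is the relative, arithmetic comb smoothing. One must check that the smoothing of the comb can be taken among sections fixing $Z$, and that it is defined over the base field rather than only geometrically. The tooth count has to be bounded uniformly and the unobstructedness verified scheme-theoretically over $R$, not just over $\bar\kappa$. We would handle this by working with the $R$-scheme of morphisms from $C$ fixing $Z$. Smoothness of the relevant component over $R$ would follow from the vanishing of $H^1$ on the comb, which has nodal domain $C_\kappa\cup\bigcup\P^1$. We would then use a smooth $\kappa$-point on the component of smoothings of the comb and apply Hensel's lemma.
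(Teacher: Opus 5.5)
\textbf{There is a genuine gap, and it is in Step 2.} Your Hensel-lifting frame is fine. A section $\sigma$ of $X_{\kappa_i}\to C_{\kappa_i}$ that agrees with $s$ on $Z$ and satisfies $H^1(C_{\kappa_i},\sigma^*\mathcal{T}_{X/C})=0$ is a smooth point of $\mathrm{Res}_{C/R}(X)$, so it lifts to $R_i$. You do not even need to lift $Z$, because the condition on $Z$ only concerns the closed fiber. Condition (b) is automatic, since any section of $p_\kappa$ between smooth $\kappa$-schemes lands in the smooth locus.

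The problem is that you place the arithmetic difficulty in the wrong spot.

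\begin{itemize}
\item The claim that $C^\circ_\kappa(\kappa)$ is Zariski dense when $\kappa$ is large is false. The section $s$ gives points of $X_\kappa$ lying over points of $C_\kappa$, not points of $C_\kappa$ itself. In fact $C_\kappa$ may have no $\kappa$-point at all: take $\kappa=\mathbb{R}$ and $C_\kappa$ a pointless conic.
\item ``Largeness applied to the parameter space of rational curves'' presupposes a smooth $\kappa$-point of that space. That is, it presupposes a very free $\kappa$-curve, which is exactly what you are trying to produce.
\item Most importantly, even a comb defined entirely over $\kappa$ only yields a one-parameter smoothing $Y\to T$, where $t_0\in T(\kappa)$ is the comb itself. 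The sections you want are the fibers over points $t\neq t_0$. So the field of definition of the final section is the residue field of a point of $T\setminus\{t_0\}$; the degrees of the attachment points play no role.
\item Consequently, choosing ``two families of attachment points of coprime degrees'', or rebuilding the comb over $\kappa_i$, does not control the degrees. Over a non-large $\kappa_i$, the curve $T\setminus\{t_0\}$ need not have any $\kappa_i$-point.
\end{itemize}

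\textbf{The missing idea is that rational attachment points and rational teeth are never needed.}

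\begin{itemize}
\item A comb over $\kappa$ may have teeth $\P^1_{\kappa(z)}$ attached at closed points $z\in C^\circ_\kappa$ with $\kappa(z)/\kappa$ separable. \Cref{avf_passing_Z} provides a dense set of such $z$, each carrying very free vertical curves over $\kappa(z)$ in prescribed normal directions.
\item This control of directions is what makes ``the normal sheaf becomes arbitrarily positive'' actually true, via the Serre-duality choice of teeth in \Cref{existence_comb_smoothable_intermediaire}. It also replaces Koll\'ar's subcomb selection, which is performed geometrically and need not be defined over $\kappa$.
\item For $\kappa$ infinite, the comb and its smoothing fixing $Z$ are then built over $\kappa$. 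This happens in the Hilbert scheme of an embedding into $X\times\P^3\times\P^1$, not in a scheme of morphisms $C\to X$, which cannot contain a nodal comb as a point. The output is a smooth $\kappa$-curve $T$ with a $\kappa$-point $t_0$.
\item Largeness is then used exactly once, on $T$, and gives (2). In general, $T\setminus\{t_0\}$ has index one, and Gabber--Liu--Lorenzini supplies separable closed points of coprime degrees, which gives (1).
\item Your argument also has no mechanism for finite $\kappa$. There the smoothing step breaks down: one needs a normal vector that is nonzero at every node, and general choices of points and directions. The paper handles this by running the large-field case over the $p$-closures $\kappa^{(p)}$, which are large, and descending to finite extensions of degree prime to $p$.
\end{itemize}
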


We originally proved \Cref{intro:cor_prop_principal_Weil} assuming that every fiber of the morphism $p\colon X\to C$ is smooth and separably rationally connected. This is enough for our purposes. The current stronger form of \Cref{intro:cor_prop_principal_Weil}, where only a general fiber of $p$ is supposed to be smooth and separably rationally connected, was suggested to us by János Koll\'ar. We include it here for possible future use; the increased generality requires only minor modifications to the proofs.

As a consequence of \Cref{intro:cor_prop_principal_Weil}, we prove a Henselian analogue of the aforementioned theorem of Drinfeld--Simpson \cite[Theorem~1]{drinfeld1995b-structures}.
 
 \begin{cor}\label{cor:drinfeld-simpson}
Let $R$ be a Henselian local ring with residue field $\kappa$, let $C$ be a smooth projective $R$-curve with geometrically connected fibers, let $G$ be a quasi-split reductive group over $C$, and let $B$ be a Borel subgroup of $G$. Let $E$ be a $G$-torsor over $C$, and suppose that $E_\kappa\to C_\kappa$ is Zariski-locally trivial. 
\begin{enumerate}
    \item There exists a finite collection of finite étale Henselian local $R$-algebras $\{R_i\}_{i\in I}$ of collectively coprime degrees such that $E_{R_i}$ admits a reduction of structure group to $B\times_C C_{R_i}$ for every $i\in I$.
    \item If $\kappa$ is large, then $E$ admits reduction of structure to $B$.
\end{enumerate}
\end{cor}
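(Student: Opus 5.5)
The plan is to deduce \Cref{cor:drinfeld-simpson} from \Cref{intro:cor_prop_principal_Weil}, applied to $X := E/B$, the scheme of reductions of $E$ to $B$, together with the projection $p\colon X\to C$. Since $G$ is quasi-split over $C$ with Borel subgroup $B$, the fppf quotient $G/B$ is represented by a smooth projective $C$-scheme. This is the scheme of Borel subgroups of $G$. The twist $X=E\times^G (G/B)$ is therefore a smooth projective $C$-scheme: it is an fppf form of $G/B$, and projectivity descends because $E$ is étale-locally trivial and $G/B$ carries a $G$-equivariant relatively ample line bundle, namely an anti-dominant character of $B$. Since $C$ is smooth projective over $R$, the scheme $X$ is a smooth projective $R$-scheme.

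Next I would check the remaining hypotheses. Every geometric fiber of $p$ is a flag variety $G_{\bar x}/B_{\bar x}$, which is smooth, projective and rational, hence separably rationally connected. So we may take $C^\circ_\kappa = C_\kappa$.

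Sections of $p$ over any $C$-scheme $T$ are exactly the reductions of $E_T$ to $B_T$. So it suffices to produce a section of $X_\kappa\to C_\kappa$, or even only a rational one. By hypothesis $E_\kappa$ is Zariski-locally trivial, so it is trivial over the generic point $\eta$ of $C_\kappa$. The trivial torsor has the reduction given by $B$ itself, which yields a $\kappa(\eta)$-point of $X_\kappa$, that is, a rational section of $p_\kappa$. Alternatively, one can argue that reductive groups over curves over fields admit Borel reductions of generically trivial torsors; this is not needed here.

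With these hypotheses in place, I would apply \Cref{intro:cor_prop_principal_Weil} with $Z=\emptyset$. Part (1) gives finite étale Henselian local $R$-algebras $R_i$ of collectively coprime degrees and sections $s_i$ of $X_{R_i}\to C_{R_i}$. Each such section is a reduction of $E_{R_i}$ to $B\times_C C_{R_i}$; this uses the identification $X_{R_i}=E_{R_i}/B_{R_i}$, since formation of the quotient commutes with base change. Part (2) gives, for large $\kappa$, a section $s'$ of $p$, which is a reduction of $E$ to $B$.

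The only step needing real care is the representability and projectivity of $E/B$ over $C$, which is what allows us to regard $X$ as a smooth projective $R$-scheme. I would handle it via the functorial identification of $E/B$ with the scheme of Borel subgroups of the inner form ${}^E G$, which is representable and projective by SGA3, Exp. XXII, Cor. 5.8.3. The rest is formal.
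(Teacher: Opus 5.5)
Your proposal is correct and is essentially the paper's proof. The paper applies \Cref{intro:prop_projective_homogeneous_space} to $Y=E/B$, and that corollary is just \Cref{cor_prop_principal_Weil} with $Z=\varnothing$, after checking via \Cref{src_invariant_base_change} and the Bruhat decomposition that the fibers are separably rationally connected. You have inlined this intermediate step, and your identification $E/B\cong \mathrm{Bor}({}^E G)$ supplies the representability and projectivity of $E/B$, which the paper takes for granted.
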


Applying \Cref{cor:drinfeld-simpson} to $G=\mathrm{PGL}_{n,C}$ for all $n\geqslant 1$, one deduces that the specialization map between Brauer--Azumaya groups $\mathrm{Br}_{\mathrm{Az}}(C)\to \mathrm{Br}_{\mathrm{Az}}(C_\kappa)$ is injective. This is due to Colliot-Thélène--Ojanguren--Parimala \cite[Theorem~1.8]{ColliotTheleneOjangurenParimala2002}: using Artin's approximation theorem, they proved that the map $\mathrm{Br}_{\mathrm{Az}}(C)\to \mathrm{Br}_{\mathrm{Az}}(C_\kappa)$ is bijective.

\subsection{Comb smoothing and proofs of the main results} 

The works described above studied uniformization and local triviality via techniques involving moduli stacks, loop groups, and patching. In contrast, our proofs of Theorems \ref{thm:main}, \ref{thm:large-field} and \ref{thm:main-average} are geometric; more precisely, they are based on a relative and arithmetic version of the comb smoothing technique, whose development takes up the majority of this paper.

For the proofs of Theorems \ref{thm:main} and \ref{thm:large-field}, by a twisting argument it suffices to show that, for every $G$-torsor $E\to C$ such that $E_\kappa\to C_\kappa$ is Zariski-locally trivial, the $G$-torsor $E\to C$ is Zariski-locally trivial. Since $\mathrm{rad}(G)$ is isotrivial, a theorem of Nath \cite{nath2026compactification} implies the existence of a fiberwise dense smooth proper compactification $\overline{E}$ of $E$ over $C$; see \Cref{sec:6} and Appendix~\ref{sec:appendix}. By the valuative criterion for properness, a rational section of $E_\kappa\to C_\kappa$ extends uniquely to a section $s\colon C_\kappa\to \overline{E}_\kappa$. Equivalently, $s$ defines a $\kappa$-point of the Weil restriction $\mathrm{Res}_{C/R}(\overline{E})$, which by a theorem of Grothendieck is represented by an $R$-scheme locally of finite type. If this $\kappa$-point were smooth, we could lift it by Hensel's lemma to an $R$-point of $\mathrm{Res}_{C/R}(\overline{E})$. The strategy is therefore to replace this $\kappa$-point by a smooth point whose corresponding section agrees with the initial one on any prescribed finite closed subset of $C_\kappa$. In order to accomplish this, we prove an arithmetic version of the comb smoothing technique of \cite{graber2003families}.

\begin{thm} \label{thm:add_enough_teeth}
Let $k$ be an infinite field, let $D$ be a geometrically connected smooth projective $k$-curve, let $Z \subset D$ be a finite subscheme, let $X$ be a smooth projective $k$-scheme, let $p\colon X\to D$ be a morphism, and assume that there exists a nonempty open subscheme $D^\circ\subset D$ such that $p^{-1}(D^\circ)\to D^\circ$ is smooth and has separably rationally connected fibers, and let $s \colon D \to X$ be a section of $p$. Then, for every integer $N \geqslant 0$, there exists an integer $m_N \geqslant N$, a comb $\mathcal{C}$ with handle $D$ and teeth $C_1, \dots, C_{m_N}$ attached to $D$ at points of $D^\circ$ distinct from those of $Z$, and a morphism $f \colon \mathcal{C} \to X$ smoothable fixing $f(Z)$, such that $f|_D = s$ and such that, for all $1 \leqslant i \leqslant m_N$, $p \circ f|_{C_i}$ is constant and the restriction $f|_{C_i} \colon C_i \to X$ is almost very free.
\end{thm}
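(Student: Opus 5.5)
We follow the strategy of Graber--Harris--Starr and Kollár, adapted to a non-closed infinite field $k$. Since $k$ is infinite, $D(k)$ is Zariski dense in $D$, and $D^\circ(k)\setminus Z$ is infinite. For each fiber $X_t=p^{-1}(t)$ with $t\in D^\circ(k)$, the fiber is smooth and separably rationally connected over $k$ and has the $k$-point $s(t)$.

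\textbf{Step 1: a very free tooth through each chosen point.} We need a very free rational curve $g_t\colon \P^1_k\to X_t$ defined over $k$ with $g_t(0)=s(t)$. For separably rationally connected varieties, very free curves through any given $k$-point exist over $k$ once $k$ is infinite. One route is Kollár's result that through a $k$-point of a smooth SRC variety over an infinite field there pass very free curves, obtained by smoothing a comb over $k$ or by taking a general member of a family of free curves parametrized by a rational variety. Viewed in $X$, the tooth $C_t$ is contracted by $p$. Its normal bundle sits in
\[
0\to N_{C_t/X_t}\to N_{C_t/X}\to \mathcal O_{C_t}\to 0 .
\]
Since $N_{C_t/X_t}$ is ample, the bundle $N_{C_t/X}$ is generated by global sections with $H^1(N_{C_t/X}(-1))=0$. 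This is the property we take as ``almost very free''; if the paper's definition asks for it only after twisting by $-q$ at the attachment point $q=0$, the same computation applies. Choose $m_N\ge N$ distinct points $t_1,\dots,t_{m_N}\in D^\circ(k)\setminus Z$ and glue the teeth at $s(t_i)$ to obtain the comb $f\colon\mathcal C\to X$ with $f|_D=s$.

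\textbf{Step 2: smoothability fixing $f(Z)$.} We want $H^1(\mathcal C, N_f(-Z))=0$, where $N_f$ is the normal sheaf of the comb; at the nodes it is an elementary modification of $N_{D}$ in the direction of the teeth. This vanishing makes the deformation space of $f$ with $Z$ fixed smooth and lets us deform away the nodes. Following Graber--Harris--Starr, we restrict to the handle and use
\[
0\to N_{s}(-Z)\to N_{s}(-Z)\bigl(\textstyle\sum_i t_i \text{ twisted toward } T_{C_i}\bigr)\to \bigoplus_i \kappa(t_i)\to 0 .
\]
The teeth contribute nothing to $H^1$ because of the positivity in Step 1. It therefore suffices to show that, for enough general points $t_i$ and general choices of tangent directions, the modified bundle $N_s(-Z)'$ on $D$ has $H^1=0$.

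\textbf{Step 3 (main obstacle).} The GHS argument shows that $H^1$ drops by one for each added tooth whose direction is general. It then uses finiteness, since $H^1(N_s(-Z))$ is finite dimensional, and the fact that a general $N$-tuple can be extended. Over a non-closed field the difficulty is that ``general'' must be realized by $k$-rational data. The data are the points $t_i\in D^\circ(k)$ and the very free curves in $X_{t_i}$, whose tangent directions must sweep out a dense set of directions in $T_{s(t_i)}X$ transverse to $D$.

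My plan is to argue duality-wise. A nonzero class $\omega\in H^0(N_s^\vee(Z)\otimes\omega_D)$ obstructs only if it annihilates every added direction. The tangent vectors at $s(t)$ of very free $k$-curves in $X_t$ span $T_{s(t)}X_t$, since they form a Zariski-dense set given by $k$-points of an open subset of a rational family. Because $k$ is infinite, we can choose $t$ and a $k$-curve whose tangent direction avoids $\ker\omega(t)$. Iterating kills $H^1$ after at most $h^1(N_s(-Z))$ teeth. Further teeth can always be added while preserving the vanishing, which gives $m_N\ge N$. The delicate point I expect to check carefully is the rationality of the parameter space of very free curves through $s(t)$ with prescribed tangent. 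This justifies Zariski density of the $k$-points and can be handled by composing with rational self-maps of $\P^1_k$.
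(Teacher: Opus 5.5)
\textbf{Your construction uses $k$-rational data at every step, and an arbitrary infinite field does not supply it.}

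First, the claim ``$k$ is infinite, so $D(k)$ is Zariski dense'' is false. For a conic over $\mathbb{Q}$ without rational points, or a curve of genus $\geqslant 2$ over $\mathbb{Q}$ (Faltings), the set $D^\circ(k)\setminus Z$ is empty or finite. So you cannot attach $m_N\geqslant N$ teeth at $k$-points. This is exactly the situation in the application, where $D=C_\kappa$ for an arbitrary residue field $\kappa$.

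Second, Step~1 needs a very free $\mathbb{P}^1_k\to X_t$ through the \emph{prescribed} point $s(t)$. Koll\'ar's results of this kind are proved for large fields. One smooths a comb over $k$ and then needs $k$-points on the smoothing base near $t_0$, which is precisely what largeness provides (compare the role of largeness in \Cref{prop_principal_Weil}(2)). Over an arbitrary infinite field, such as a number field, no such statement is available.

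Third, Step~3 asks for more. It needs $k$-points of the space of very free curves through $s(t)$ to be dense enough that their tangent directions span $T_{s(t)}X_t$. You justify this by saying these curves form an open subset of a rational family, but nothing guarantees that. Density of $k$-points near a given $k$-point of a smooth variety is again a large-field property. Composing with self-maps of $\mathbb{P}^1_k$ cannot supply it: it only reparametrizes one image curve, so it yields only the tangent lines of that curve's branches at $s(t)$. Your duality step itself is the right one; it is \Cref{existence_comb_smoothable_intermediaire}, after Koll\'ar. The problem is that it cannot be fed with $k$-rational teeth.

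The missing idea is to let the attachment point move and to allow non-rational attachment points, as \Cref{comb} permits. A tooth is then $\mathbb{P}^1_{k(z)}$ glued at a closed point $z\in D^\circ$ with $k(z)/k$ separable; the point $z$ and the curves are chosen together, which resolves both the first and second issues at once.

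Given a nonzero $\varphi$ dual to the offending $H^1$, the paper fixes rational line fields $v_1,\dots,v_n$ along the handle whose images span the normal space. It takes the preimages of the closures of $\mathbb{G}_m\cdot v_i$ under the tangent-evaluation map $\RC(p^{-1}(D^\circ)/D^\circ)_1\to T_{p^{-1}(D^\circ)/D^\circ}$. By \Cref{lem:ev-surj}, this map is smooth (from $H^1(\mathbb{P}^1,f^*\mathcal{T}(-2))=0$ for very free $f$) and surjective (a statement over $\overline{k}$). Hence the fiber product of these preimages is smooth and surjective over a dense open $U\subset D^\circ$. Then \cite[Proposition~6.1]{Gille2021} gives a dense set of closed points $z\in U$, with $k(z)/k$ separable, over which this fiber product has a $k(z)$-point. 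This produces very free $k(z)$-curves in $X_z$ through $s(z)$ whose tangent directions form a basis of the normal space, so one of them is not killed by $\varphi(z)$ (\Cref{avf_passing_Z}). No arithmetic input about rational curves through a given rational point is needed.

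Two smaller points:
\begin{enumerate}
\item Very free curves in low-dimensional fibers need not be embeddings. This is why the paper works in $X\times\mathbb{P}^3\times\mathbb{P}^1$, so that the comb is embedded and Hilbert-scheme deformation theory applies.
\item The vanishing $H^1(D,\mathcal{N}|_D(-Z))=0$ gives unobstructedness, but not that a first-order deformation smooths \emph{every} node. That requires each map $H^0\to T_{C_i,q_i}$ to be nonzero. The paper obtains this from global generation of $\mathcal{N}|_D(-Z)$, by killing $H^1$ after a further twist by $\mathcal{L}^{-1}$ and applying Castelnuovo--Mumford. It then uses that $k$ is infinite to pick one vector nonzero in all $T_{C_i,q_i}$ (\Cref{thm:smoothing}).
\end{enumerate}
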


To prove \Cref{thm:add_enough_teeth}, we first establish a smoothing theorem for combs embedded in $X$ satisfying a certain cohomological vanishing condition; see \Cref{thm:smoothing}. We then construct a comb satisfying this condition inside $X$ with handle $D$; see \Cref{existence_comb_smoothable}. Koll\'ar \cite[Theorem 16]{Kollar2004Specialisation} has already established a result very close to \Cref{existence_comb_smoothable}, but not exactly in the form required for our purposes. Since the precise form we need does not appear in the literature, we provide complete proofs, following the strategy of \cite{Kollar2004Specialisation} with the modifications required by our setting. Some of our deformation-theoretic arguments over arbitrary fields are similar to those developed in \cite{AraujoKollar2003}.

To construct teeth contained in the fibers of $X \to D$, we study in \Cref{sec:2} free rational curves in the fibers of $X \to D$. This also allows us to treat the case when $D$ has genus zero without separate arguments. Since a rational curve that is very free inside a fiber of $X \to D$ is not very free when viewed as a curve in the total space $X$ (the tangent directions coming from the base $D$ contribute trivial summands to the pullback of the tangent bundle of $X$), we keep track of almost very free curves and of the number of trivial summands in the splitting type of the restricted tangent bundle. In this way, we relate very free curves of $X\to D$ in the fibers to smoothness properties of the relevant evaluation maps. In \Cref{sec:3}, we recall the relationship between separable rational connectedness and very free rational curves, and use it to produce vertical rational curves in many tangent directions through prescribed points of the fibers of a smooth morphism with separably rationally connected fibers. In \Cref{sec:4}, we first establish \Cref{thm:smoothing}. Then, using the abundance of very free curves in the fibers proved in \Cref{sec:3} and following the strategy of \cite[Theorem 16]{Kollar2004Specialisation}, we construct a comb with the required properties inside $X$ (see \Cref{existence_comb_smoothable}), and apply \Cref{thm:smoothing} to this comb to prove \Cref{thm:add_enough_teeth}.

In \Cref{sec:5}, we use \Cref{thm:add_enough_teeth} to prove \Cref{intro:cor_prop_principal_Weil}. Let $\mc{X}\to C$ be a smooth projective morphism with separably rationally connected fibers, let $s\colon C_\kappa\to \mc{X}_\kappa$ be a section, and let $Z\subset C_\kappa$ be a finite subscheme. Applying \Cref{thm:add_enough_teeth}, we obtain a smoothable comb with handle $C_\kappa$ inside $\mc{X}_\kappa$ which agrees with $s$ along $Z$. Smoothing this comb produces a new section $s'\colon  C_\kappa\to \mc{X}_\kappa$ which agrees with $s$ along $Z$. Interpreting this section as a point of the Weil restriction $\mathrm{Res}_{C/R}(\mc{X})$, the fact that the teeth of the comb are chosen to be very free in the fibers, hence almost very free in the total space, implies that this point is smooth. If $\kappa$ is large, one obtains $s'$ over $\kappa$ itself; in general, one obtains finitely many $s_i'\in\mathrm{Res}_{C/R}(\mc{X})(\kappa_i)$ defined over finite separable extensions $\{\kappa_i/\kappa\}_{i\in I}$ of collectively coprime degrees.

In Sections \ref{sec:6}--\ref{sec:7}, we return to the specific situation of torsors over curves. In \Cref{sec:7}, we apply \Cref{cor_prop_principal_Weil} to $\mc{X} = \overline{E}$ to prove \Cref{thm:large-field} and \Cref{thm:main-average}. Finally, a d\'evissage argument shows that \Cref{thm:main} reduces to the cases where (i) $G$ is semisimple and (ii) $G$ is an isotrivial $C$-torus. Case~(i) is \cite[Theorem 7.2]{Gille2021}, and case~(ii) follows from \Cref{thm:main-average} and a restriction-corestriction argument.

The idea of using comb smoothing on a compactification of a torsor, and then interpreting sections of this compactification as points of a Weil restriction, was first suggested by Jason Starr \cite{starr2023local} in the context of the Drinfeld--Simpson theorem. In that setting one works \'etale-locally on the base, and the fact that a smooth morphism admits sections \'etale-locally allows the geometric argument to be carried out after passing to algebraically closed residue fields. This removes most of the technical difficulties we face in this paper.

\subsection{Notation}
Let $k$ be a field. We write $\cl{k}$ for an algebraic closure of $k$. By definition, a $k$-variety is a separated $k$-scheme of finite type, and a $k$-curve is a $k$-variety of pure dimension $1$. 

Let $X$ be a smooth $k$-variety. We let $\mathcal{T}_{X/k} \coloneqq \underline{\operatorname{Hom}}_{\mathcal{O}_X}(\Omega_{X/k}, \mathcal{O}_X)$ be the tangent sheaf of $X$. More generally, given a morphism of smooth $k$-varieties $X \to Y$, we let $\mathcal{T}_{X/Y} \coloneqq \underline{\operatorname{Hom}}_{\mathcal{O}_X}(\Omega_{X/Y}, \mathcal{O}_X)$ be the relative tangent sheaf of $X$ over $Y$.

\section{Vertical rational curves and evaluation maps}\label{sec:2}

\subsection{Rational curves}

Let $k$ be a field. Recall that, by a theorem of Grothendieck \cite{grothendieck1957classification} (see \cite{hazewinkel1982short} for a simple proof for arbitrary $k$), for every integer $r\geqslant 0$ and every rank $r$ vector bundle $\mc{E}$ on $\mathbb{P}^1_k$ there exists a unique sequence of integers $a_1\leqslant a_2\leqslant \dots \leqslant a_r$ such that 
\begin{equation}\label{eq:grothendieck-p1}\mc{E} \cong \bigoplus_{i=1}^r \mathcal{O}_{\mathbb{P}^1_k}(a_i).\end{equation}
We call the sequence $a_1\leqslant a_2\leqslant \dots \leqslant a_r$ the \emph{splitting type} of $\mc{E}$. We let $\mathfrak{z}(\mc{E})$ be the number of indices $i\in\{1,\dots,r\}$ such that $a_i=0$, that is, the rank of the largest trivial direct summand of $\mc{E}$.  

Observe that $a_i \geqslant -1$ for all $i$ if and only if $H^1(\mathbb{P}^1_k, \mathcal{E}) = 0$, 
and that $a_i \geqslant 0$ for all $i$ if and only if $\mathcal{E}$ is globally generated, that is, if and only if the natural map 
$\mathcal{O}_{\mathbb{P}^1_k} \otimes H^0(\P^1_k, \mathcal{E}) \to \mathcal{E}$ is surjective. 
We say that $\mathcal{E}$ is \emph{almost ample} if $a_i \geqslant 0$ for all $i$ and $a_i \geqslant 1$ for every $i > 1$, and \emph{ample} if $a_i \geqslant 1$ for all $i$ (equivalently, if $\mathcal{E} \otimes \mathcal{O}_{\mathbb{P}^1_k}(-1)$ is globally generated).

A \emph{$k'$-rational curve on $X$} is a $k$-morphism $\mathbb{P}^1_{k'} \to X$, where $k'/k$ is a field extension and $X$ is a $k$-variety. A $k'$-rational curve
$f \colon \mathbb{P}^1_{k'} \to X$ is said to be
\begin{itemize}
\item[\textup{(i)}] \emph{free} if $f^* \mathcal{T}_{X/k}$ is globally generated,
\item[\textup{(ii)}] \emph{almost very free} if $f^* \mathcal{T}_{X/k}$ is almost ample, and
\item[\textup{(iii)}] \emph{very free} if $f^* \mathcal{T}_{X/k}$ is ample.
\end{itemize}

\begin{rmk}\label{rmk:field-extension}
    Let $\mc{E}$ be a vector bundle over $\P^1_k$. For every field extension $k'/k$, the splitting type of $\mc{E}$ is equal to the splitting type of $\mc{E}_{k'}$, and in particular $\mathfrak{z}(\mc{E})=\mathfrak{z}(\mc{E}_{k'})$. Therefore, given a smooth $k$-variety $X$ and field extensions $k\subset k'\subset k''$, a $k'$-rational curve $f\colon \P^1_{k'}\to X$ is free (respectively very free, almost very free) if and only if so is the composite $\P^1_{k''}\to  \P^1_{k'} \xrightarrow{f} X$.
\end{rmk}

\subsection{Free curves on the fibers of a smooth morphism}

Let $d\geqslant 0$ be an integer, let $Y$ be a smooth $k$-variety of pure dimension $d$, and let $p \colon X \to Y$ be a smooth morphism. By \cite[Lemma 02K4]{stacks-project}, we have a short exact sequence of vector bundles on $X$:
\begin{equation}\label{exact_sequence_tangent}
0 \longrightarrow \mathcal{T}_{X/Y} \longrightarrow \mathcal{T}_{X/k} \longrightarrow p^*\mathcal{T}_{Y/k} \longrightarrow 0.
\end{equation}
Let $k'/k$ be a field extension, and let $f\colon\P^1_{k'}\to X$ be a $k'$-rational curve such that the composite morphism $p \circ f$ factors through a $k'$-point $y\in Y(k')$. The pullback of \eqref{exact_sequence_tangent} along $f$ is given by the following exact sequence of vector bundles on $\P^1_{k'}$:
\begin{equation}\label{exact_sequence_tangent_pullback}
0 \longrightarrow f_y^* \mathcal{T}_{X_y/k'} \longrightarrow f^* \mathcal{T}_{X/k} \longrightarrow \mathcal{O}_{\mathbb{P}^1_{k'}}^{\oplus d} \longrightarrow 0.
\end{equation}

\begin{lemma}\label{vf_avf}
Let $k$ be a field, let $d\geqslant 0$ be an integer, let $X$ be a smooth $k$-variety, let $Y$ be a smooth $k$-variety of pure dimension $d$, and let $p\colon X \to Y$ be a smooth morphism. Let $k'/k$ be a field extension, let $f\colon \P^1_{k'} \to X$ be a $k'$-rational curve such that the composite $p \circ f$ factors through a $k'$-point $y\in Y(k')$, let $X_y\coloneqq X\times_Yy$, and consider the $k'$-rational curve $f_y\colon \P^1_{k'}\to X_y$ induced by $f$. Consider the following assertions.
\begin{enumerate}
    \item The $k'$-rational curve $f$ is free.
    \item The $k'$-rational curve $f_y$ is free.
    \item The sequence of vector bundles (\ref{exact_sequence_tangent_pullback}) splits.
    \item We have $\mathfrak{z}(f^*\mc{T}_{X/k})=\mathfrak{z}(f_y^*\mc{T}_{X_y/k'})+d$.
\end{enumerate}
Then (1)$\iff$(2) $\Longrightarrow$ (3) $\Longrightarrow$ (4).
\end{lemma}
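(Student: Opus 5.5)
Throughout, the argument works with the exact sequence \eqref{exact_sequence_tangent_pullback} on $\P^1_{k'}$, written $0\to \mc{A}\to \mc{B}\to \mc{O}^{\oplus d}\to 0$ with $\mc{A}=f_y^*\mc{T}_{X_y/k'}$ and $\mc{B}=f^*\mc{T}_{X/k}$. Two standard facts about splitting types via Grothendieck's theorem \eqref{eq:grothendieck-p1} will be used. First, a quotient of a globally generated bundle is globally generated. Second, a bundle $\mc{E}$ is globally generated if and only if $H^1(\P^1_{k'},\mc{E}(-1))=0$; equivalently, all $a_i\geqslant 0$ iff all $a_i-1\geqslant -1$. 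By \Cref{rmk:field-extension}, nothing changes if we pass to $\cl{k'}$, but this should not be needed.

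\emph{(2)$\Rightarrow$(1).} Twist the sequence by $\mc{O}(-1)$ and take the long exact cohomology sequence. This gives $H^1(\mc{A}(-1))\to H^1(\mc{B}(-1))\to H^1(\mc{O}(-1)^{\oplus d})=0$. If $\mc{A}$ is globally generated, the left term vanishes, so $H^1(\mc{B}(-1))=0$ and $\mc{B}$ is globally generated.

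\emph{(1)$\Rightarrow$(2).} This direction is slightly subtler, since a subbundle of a globally generated bundle need not be globally generated. Suppose $\mc{A}$ had a summand $\mc{O}(a)$ with $a<0$. Write $\mc{B}=\bigoplus \mc{O}(b_j)$ with all $b_j\geqslant 0$. The composite $\mc{O}(a)\to\mc{B}$ is a subbundle inclusion, but every map $\mc{O}(a)\to \mc{O}(b_j)$ is fine for degree reasons, so this gives no contradiction directly. Instead, use degrees and $H^0$ via the twisted sequence $0\to\mc{A}(-1)\to\mc{B}(-1)\to\mc{O}(-1)^d\to0$. Since $H^0(\mc{O}(-1)^d)=0$, we get $H^0(\mc{A}(-1))\cong H^0(\mc{B}(-1))$ and an exact sequence $0\to H^1(\mc{A}(-1))\to H^1(\mc{B}(-1))=0$. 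Hence $H^1(\mc{A}(-1))=0$, i.e.\ $\mc{A}$ is globally generated. So the $H^1$ sequence handles both directions at once. This is also essentially the intrinsic description of freeness of $f_y$, since $\mc{T}_{X_y/k'}$ pulled back to $X_y$ is the restriction of $\mc{T}_{X/Y}$.

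\emph{(2)$\Rightarrow$(3).} The sequence is classified by an element of $\on{Ext}^1(\mc{O}^{\oplus d},\mc{A})=H^1(\P^1_{k'},\mc{A})^{\oplus d}$. If $\mc{A}$ is globally generated, all its degrees are $\geqslant 0>-1$, so $H^1(\mc{A})=0$ and the sequence splits.

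\emph{(3)$\Rightarrow$(4).} If $\mc{B}\cong\mc{A}\oplus\mc{O}^{\oplus d}$, then by the uniqueness in Grothendieck's theorem the splitting type of $\mc{B}$ is that of $\mc{A}$ with $d$ zeros added. So $\mathfrak{z}(\mc{B})=\mathfrak{z}(\mc{A})+d$.

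The only point requiring care is (1)$\Rightarrow$(2), where the vanishing of $H^0(\mc{O}(-1))$ in the long exact sequence is the key input. Everything else is routine.
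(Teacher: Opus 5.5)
Your proof is correct. For (2)$\Rightarrow$(3) (vanishing of $\operatorname{Ext}^1(\mathcal{O}^{\oplus d},\mathcal{A})=H^1(\mathcal{A})^{\oplus d}$) and for (3)$\Rightarrow$(4) (uniqueness of the splitting type) it is the same as the paper's argument. The genuine difference is in how you get (1)$\iff$(2).

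The paper first proves (1)$\Rightarrow$(3) directly. It writes $f^*\mathcal{T}_{X/k}=E\oplus\mathcal{O}^{\oplus e}$ with $E$ a sum of positive-degree line bundles. It then uses $\operatorname{Hom}(\mathcal{O}(a),\mathcal{O})=0$ for $a>0$ to see that $\mathcal{O}^{\oplus e}\to\mathcal{O}^{\oplus d}$ is surjective, hence split. So under either (1) or (2) the sequence splits, and (1)$\iff$(2) is read off by comparing the splitting types of $\mathcal{A}\oplus\mathcal{O}^{\oplus d}$ and $\mathcal{A}$.

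You instead twist by $\mathcal{O}(-1)$ and use that $\mathcal{O}(-1)$ is acyclic. This gives $H^1(\mathcal{A}(-1))\cong H^1(\mathcal{B}(-1))$, so both directions follow at once from the criterion that a bundle $\mathcal{E}$ on $\P^1$ is globally generated iff $H^1(\mathcal{E}(-1))=0$.

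What each route buys: yours is shorter and symmetric, and proves (1)$\iff$(2) without passing through (3); in your version, (1)$\Rightarrow$(3) only comes via (2). The paper's stays at the level of explicit splitting types and gives a direct proof of (1)$\Rightarrow$(3). That same viewpoint is reused in the proof of \Cref{ev_smooth_almost_free}, where one fixes a splitting compatible with the projection onto $\mathcal{O}^{\oplus d}$.

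Two things in your write-up can simply be deleted: the abandoned attempt with a negative-degree summand of $\mathcal{A}$, and the unused remark that quotients of globally generated bundles are globally generated.
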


\begin{proof}
To simplify notation, we write $\mathcal{O}$ for $\mathcal{O}_{\mathbb{P}^1_{k'}}$.

(1) $\Longrightarrow$ (3). Write $f^* \mathcal{T}_{X/k}=E\oplus \mathcal{O}^{\oplus e}$, where $E$ is the direct sum of $\mc{O}(a_i)$ such that $a_i>0$. For every $a>0$, we have $\on{Hom}(\mc{O}(a),\mc{O})=0$, and hence the restriction of $f^* \mathcal{T}_{X/k}\to \mc{O}^{\oplus d}$ to $E$ is trivial. It follows that the restriction of $f^* \mathcal{T}_{X/k}\to \mc{O}^{\oplus d}$ to $\mc{O}^{\oplus e}$ is surjective. Since every surjection $\mc{O}^{\oplus e}\to \mc{O}^{\oplus d}$ is split, we conclude that (\ref{exact_sequence_tangent_pullback}) splits.

(2) $\Longrightarrow$ (3). If $f_y^* \mathcal{T}_{X_y/k'}$ is globally generated, then $\on{Ext}^1(\mathcal{O}^{\oplus d},f_y^* \mathcal{T}_{X_y/k'})\cong H^1(\P^1_{k'},f_y^* \mathcal{T}_{X_y/k'})^{\oplus d}$ vanishes. It follows that the extension (\ref{exact_sequence_tangent_pullback}) splits.

(1)$\iff$(2). Since either of (1) and (2) implies (3), we may assume that the extension (\ref{exact_sequence_tangent_pullback}) splits. Thus, $f^*\mc{T}_{X/k}= f_y^*\mc{T}_{X_y/k'} \oplus \mc{O}^{\oplus d}$, and the conclusion follows by comparing the splitting types of $f_y^*\mc{T}_{X_y/k'}$ and $f^*\mc{T}_{X/k}$. 

(3) $\Longrightarrow$ (4). This is clear.
\end{proof}

\subsection{Schemes of vertical rational curves}

Let $X$ be a quasi-projective $k$-variety. We write $\mathrm{RC}(X)$ for the functor $\underline{\mathrm{Hom}}(\P^1_k,X)$ of rational curves on $X$, that is, the presheaf on the category of $k$-schemes which associates to a $k$-scheme $S$ the set
\[\mathrm{RC}(X)(S)\coloneqq \on{Hom}_k(\P^1_S,X).\]
By \cite[Theorem 5.23]{fantechi2005fundamental}, the presheaf $\mathrm{RC}(X)$ is representable by a $k$-scheme locally of finite type all of whose connected components are quasi-projective. For a $k$-scheme $S$ and a $k$-morphism $f\colon \P^1_S\to X$, we write $[f]$ for the corresponding $S$-point of $\RC(X)$.

\begin{lemma}\label{hom_smooth_very_free}
Let $k$ be a field, let $X$ be a smooth quasi-projective $k$-variety, let $k'/k$ be a field extension, and let $f \colon \mathbb{P}^1_{k'} \to X$ be a $k'$-rational curve on $X$. Suppose that $f$ is free. Then the corresponding $k'$-point $[f] \in \RC(X)(k')$ lies in the smooth locus of $\RC(X)$.
\end{lemma}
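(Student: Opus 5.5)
The plan is to use the standard deformation theory of morphisms. The tangent space to $\RC(X)$ at $[f]$ is $H^0(\P^1_{k'},f^*\mc{T}_{X/k})$, and obstructions to deforming $f$ lie in $H^1(\P^1_{k'},f^*\mc{T}_{X/k})$. Freeness means all $a_i\geqslant 0$. Since $H^1(\P^1_{k'},\mc{O}(a))=0$ for $a\geqslant -1$, the obstruction space vanishes, and smoothness at $[f]$ should follow from the infinitesimal lifting criterion.

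First I reduce to $k'=k$, or at least make the point closed. Set $M\coloneqq \RC(X)$. The morphism $\Spec k'\to M$ has image a point $m$ with residue field $\kappa(m)\subset k'$, and $f$ descends to $f_m\colon \P^1_{\kappa(m)}\to X$. By \Cref{rmk:field-extension}, $f_m$ is free if and only if $f$ is. Smoothness of $M$ at $m$ means that $M\to\Spec k$ is smooth in a neighborhood of $m$, which is a property of the point $m$ alone. It is also enough to check smoothness after the base change $k\to\cl{k}$ at points over $m$. Alternatively, I can work directly over $k$ using the infinitesimal criterion for the locally finite type morphism $M\to \Spec k$.

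Second, I verify formal smoothness at $m$. Take a surjection $A'\to A$ of local Artinian $k$-algebras with square-zero kernel $I$, where $A$ has residue field $\kappa(m)$ or an extension $L$ of it, together with a morphism $f_A\colon \P^1_A\to X$ lifting the given point. Since $X$ is smooth over $k$, the lifts of $f_A$ to $\P^1_{A'}$ exist locally on $\P^1_{A'}$, using affine opens of $X$ and the formal smoothness of $X$. Any two local lifts differ by a section of $f_L^*\mc{T}_{X/k}\otimes_L I$. The local lifts therefore glue after a correction, and the obstruction to gluing is a Čech class in $H^1(\P^1_L, f_L^*\mc{T}_{X/k})\otimes_L I$. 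This class vanishes because the splitting type of $f_L^*\mc{T}_{X/k}$ has all $a_i\geqslant 0$. Hence every such lift exists.

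This gives the lifting property for $\Spec A\to M$ along $\Spec A'$. By the infinitesimal criterion for a morphism locally of finite presentation (EGA IV 17.14.2, checked at the complete local ring $\widehat{\mc{O}}_{M,m}$), $M$ is smooth over $k$ at $m$. The local dimension there is $h^0(f^*\mc{T}_{X/k})$. The main technical point is the reduction from formal smoothness at a possibly non-closed point with residue field $\kappa(m)$ to smoothness of $M$ at $m$. I will handle it by the criterion that a complete Noetherian local $k$-algebra with the lifting property for Artinian thickenings is formally smooth, hence regular and geometrically regular. This uses the representability and local finite type of $\RC(X)$ from \cite[Theorem 5.23]{fantechi2005fundamental}. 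Alternatively, I can simply cite the standard result \cite[Theorem 5.23]{fantechi2005fundamental} and Kollár's book, which say that $\RC(X)$ is smooth at $[f]$ whenever $H^1(f^*\mc{T}_{X/k})=0$.
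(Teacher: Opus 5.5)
Your proposal is correct and follows essentially the paper's route. The paper simply cites \cite[II, Corollary 3.5.4]{kollar1996rational} (see also \cite[Theorem 5.2]{AraujoKollar2003}), whose proof is exactly the vanishing of the obstruction space $H^1(\P^1_{k'}, f^*\mc{T}_{X/k})$ for a free curve that you carry out by hand, and your fallback of citing the standard result coincides with what the paper does. The step you flag as delicate, passing from Artinian lifting at a possibly non-closed point $m$ to smoothness at $m$, can also be sidestepped: freeness is an open condition on $\RC(X)$, so it suffices to treat closed points, or you can run the lifting criterion over arbitrary affine square-zero thickenings into the open free locus.
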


\begin{proof}
This follows from \cite[II, Corollary 3.5.4]{kollar1996rational}; see also \cite[Theorem 5.2]{AraujoKollar2003}.
\end{proof}

Let $Y$ be another quasi-projective $k$-variety, and let $p\colon X\to Y$ be a morphism. For a $k$-scheme $S$, a morphism $\P^1_S\to Y$ is said to be \emph{constant} if it factors through the projection $\P^1_S\to S$, that is, if it belongs to the image of the natural map $Y(S)\to Y(\P^1_S)$. We define the subfunctor $\RC(X/Y)$ of $\RC(X)$ as the functor sending a $k$-scheme $S$ to 
\[\RC(X/Y)(S)\coloneqq \{f\colon \P^1_S\to X\;|\; \text{$p\circ f$ is constant}\}.\]
We have a cartesian square of functors
\begin{equation}\label{eq:hom-const-representable}
\begin{tikzcd}
    \RC(X/Y) \arrow[r,"p\circ(-)"]\arrow[d,hook,"\iota"] & Y \arrow[d,hook,"c"] \\
    \RC(X) \arrow[r,"p\circ(-)"] & \RC(Y), 
\end{tikzcd}
\end{equation}
where $\iota$ is the natural inclusion, and where $c$ is the \enquote{constant-maps section}, that is, the morphism induced by the structure morphism $\P^1_k\to\on{Spec}(k)$. The morphism $c$ is a closed immersion, because it is a section of the evaluation morphism $\RC(Y)\to Y$ at any given $k$-point of $\P^1_k(k)$. It is also an open immersion. Indeed, letting $L$ be a very ample line bundle on $Y$, for every $k$-scheme $S$ the function $s\mapsto \on{deg}(f_s^*L)$ is locally constant on $S$; in particular, the locus of points $s\in S$ such that $\on{deg}(f_s^*L)=0$ is open and closed in $S$. It follows that $\RC(X/Y)$ is represented by a $k$-scheme locally of finite type and that $\iota$ is an open and closed immersion.

We define the functor $\RCE(X/Y)$ of rational curve embeddings in $X$ as the subfunctor of $\RC(X/Y)$ which to a $k$-scheme $S$ associates
\[
\RCE(X/Y)(S)
\coloneqq \{ f \colon \mathbb{P}^1_S \to X \mid p\, \circ f \text{ is constant and the induced map }
\P^1_S\to X_S \text{ is a closed immersion} \}.
\]
By \cite[Lemma 05XA]{stacks-project}, this is an open
subscheme of $\RC(X)$. By the next lemma, for every $k$-scheme $S$ we have
\[\RC(X/Y)(S)= \{f\colon \P^1_S\to X\,|\, \text{$(p\circ f)|_{\P^1_{k(s)}}$ is constant for every $s\in S$}\}.\]

\begin{lemma}\label{lem:all-p1-are-constant}
    Let $Y$ be a $k$-variety, let $S$ be a $k$-scheme, and let $f \colon \P^1_S \to Y$ be a $k$-morphism such that for every $s\in S$, the restriction of $f$ to $\P^1_{k(s)}$ is constant. Then $f$ is constant. 
\end{lemma}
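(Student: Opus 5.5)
The plan is to check that $f$ factors through the projection $\pi\colon \P^1_S\to S$ locally on $S$, and then glue these local factorizations. The only inputs are the properness of $\pi$ and the identity $\pi_*\mathcal{O}_{\P^1_S}=\mathcal{O}_S$. Working with affine opens of $Y$ avoids any scheme-theoretic comparison of $f$ with $f\circ\sigma\circ\pi$ for a section $\sigma$. Such a comparison would only give set-theoretic agreement when $S$ is non-reduced, so it is not enough on its own.

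\emph{Step 1 (finding a good neighbourhood).} Fix $s\in S$. By hypothesis $f(\P^1_{k(s)})$ is a single point $y\in Y$. Choose an affine open $U\ni y$ of $Y$. Then $f^{-1}(U)$ is an open subset of $\P^1_S$ containing the whole fiber $\pi^{-1}(s)$. Its complement $F$ is closed, and $\pi$ is proper, hence closed, so $\pi(F)$ is closed in $S$ and does not contain $s$. Choose an affine open $V\ni s$ with $V\cap \pi(F)=\emptyset$. Then $\P^1_V\subset f^{-1}(U)$.

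\emph{Step 2 (factoring over $V$).} Since $U$ is affine, the morphism $f|_{\P^1_V}\colon \P^1_V\to U$ corresponds to a ring homomorphism $\mathcal{O}(U)\to \Gamma(\P^1_V,\mathcal{O})$. We have $\Gamma(\P^1_V,\mathcal{O})=\mathcal{O}(V)$, because $H^0(\P^1_A,\mathcal{O})=A$ for every ring $A$; this holds by the standard Čech computation with no flatness or reducedness assumptions. Hence $f|_{\P^1_V}$ factors as $g_V\circ\pi|_{\P^1_V}$ for a unique $g_V\colon V\to U\subset Y$.

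\emph{Step 3 (gluing).} Cover $S$ by such opens $V$. On an overlap $V\cap V'$ we have $g_V\circ \pi=g_{V'}\circ\pi$. Since $\pi$ admits a section (for instance the point $0$), $\pi$ is an epimorphism of schemes, so $g_V=g_{V'}$ on $V\cap V'$. The $g_V$ therefore glue to a morphism $g\colon S\to Y$ with $f=g\circ \pi$, which means $f$ is constant.

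The only step that needs any care is Step 1: one has to see that fiberwise constancy is exactly what puts an entire fiber inside the preimage of a single affine open, which then lets properness shrink $S$. The rest is formal.
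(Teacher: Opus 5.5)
Your proof is correct and follows essentially the same route as the paper: restrict to an affine open $U\subset Y$, use $\Gamma(\P^1_V,\mathcal{O})=\mathcal{O}(V)$ to factor $f$ through the base over a suitable open of $S$, and then pass from local to global. The differences are cosmetic. The paper gets $f^{-1}(U)=\P^1_W$ exactly, using openness of the projection and the fact that $f^{-1}(U)$ is a union of fibres, whereas you shrink to a neighbourhood using closedness. The paper also concludes by checking $f=f\circ\sigma\circ p$ on each $f^{-1}(U)$ rather than gluing the $g_V$; done this way, that comparison is scheme-theoretic, so the approach your remark sets aside actually works fine.
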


\begin{proof}
    Let $p\colon \P^1_S\to S$ be the projection, let $\sigma\colon S\to \P^1_S$ be a section of $p$, and let $f\colon \P^1_S\to Y$ be a morphism. It is enough to show that $f\circ \sigma \circ p=f$. Let $U\subset Y$ be an affine open subscheme. Then the open subscheme $f^{-1}(U)$ of $\P^1_S$ is a union of fibers of $p$, and hence, the projection $p$ being open, we have $f^{-1}(U)=p^{-1}(W)=\P^1_W$, where $W\coloneqq p(f^{-1}(U))$ is an open subscheme of $S$. Since $U$ is affine and $(p|_{\mathbb P^1_W})_*\mc{O}_{\mathbb P^1_W}=\mc{O}_W$, we deduce that the restriction $f|_{f^{-1}(U)}\colon f^{-1}(U)\to U$ factors through a morphism $g\colon W\to U$. This implies that $f\circ \sigma \circ p= g\circ p\circ \sigma\circ p= g\circ p =f$ over $f^{-1}(U)$. By letting $U$ vary over an affine open covering of $Y$, we conclude that $f\circ \sigma \circ p=f$, as desired. 
\end{proof}

\subsection{Evaluation maps} 

Let $X$ be a quasi-projective $k$-variety. We have a $k$-scheme morphism
\begin{equation}\label{eq:evaluation-map}
    \on{ev}\colon\P^1_k\times_k\RC(X)\longrightarrow X,\qquad (t,f)\mapsto f(t),
\end{equation}
called the \emph{evaluation map}, and an induced morphism
\begin{equation}\label{eq:evaluation-map-2}
\operatorname{ev}^{(2)} \colon \P^1_k \times_k \P^1_k \times_k \RC(X) \longrightarrow X \times_k X,\qquad (t,t',f)\mapsto(\on{ev}(t,f),\on{ev}(t',f))=(f(t),f(t')).
\end{equation}
Let $Y$ be another quasi-projective $k$-variety, and let $p\colon X\to Y$ be a morphism. The natural morphism $X\times_YX\to X\times_kX$ is a closed immersion, and the restriction of $\operatorname{ev}^{(2)}$ to $\mathbb{P}^1_k \times_k \mathbb{P}^1_k \times_k\RC(X/Y)$ uniquely factors through a morphism 
\begin{equation}\label{evaluation-map-2-y}
\on{ev}^{(2)}_Y \colon \P^1_k \times_k \P^1_k \times_k \RC(X/Y) \longrightarrow X \times_Y X.
\end{equation}
In other words, we have a commutative square
\[
\begin{tikzcd}
    \P^1_k \times_k \P^1_k \times_k \RC(X/Y) \arrow[r,"\on{ev}^{(2)}_Y"]\arrow[d,hook,"\on{id}\times_k\on{id}\times_k\iota"]  & X\times_YX \arrow[d,hook] \\
    \P^1_k \times_k \P^1_k \times_k \RC(X) \arrow[r,"\on{ev}^{(2)}"] & X\times_kX,
\end{tikzcd}
\]
where $\iota$ is the closed immersion of \eqref{eq:hom-const-representable}. In the special case $Y=\on{Spec}(k)$, we have $\RC(X/Y)=\RC(X)$ and $\on{ev}^{(2)}_Y=\on{ev}^{(2)}$.

\begin{prop}\label{ev_smooth_almost_free}
Let $k$ be a field, let $d\geqslant 0$ be an integer, let $X$ be a smooth quasi-projective $k$-variety, let $Y$ be a smooth $k$-variety of pure dimension $d$, and let $p\colon X\to Y$ be a smooth morphism. Let $k'/k$ be a field extension, let $t,t'\in \P^1_k(k')$ be distinct $k'$-points, let $f\colon \P^1_{k'}\to X$ be a $k'$-rational curve such that $p \circ f$ is constant, let $y\coloneqq p \circ f(t)= p \circ f(t')\in Y(k')$, and let $X_y\coloneqq X\times_Yy$. The following are equivalent.
\begin{enumerate}
\item The $k'$-point $(t,t', [f])$ of $\P^1_k \times_k \P^1_k \times_k \RC(X/Y)$ lies in the smooth locus of $\on{ev}^{(2)}_Y$.
\item The vector bundle $f^*\mc{T}_{X/k}$ is globally generated and $\mathfrak{z}(f^*\mc{T}_{X/k})=d$.
\item The $k'$-rational curve $f_y\colon \P^1_{k'}\to X_y$ is very free.
\end{enumerate}
\end{prop}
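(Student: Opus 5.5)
The plan is to prove (2)$\iff$(3) by a splitting-type computation using \Cref{vf_avf}, and (1)$\iff$(3) by identifying the tangent map of $\on{ev}^{(2)}_Y$ at $(t,t',[f])$ through deformation theory, then applying the Jacobian criterion. Throughout, write $\mc{O}=\mc{O}_{\P^1_{k'}}$ and $\mc{E}_y\coloneqq f_y^*\mc{T}_{X_y/k'}$. By \Cref{rmk:field-extension}, and since smoothness at a point can be checked after extending scalars, we may enlarge $k'$ whenever convenient.

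(3)$\Rightarrow$(2): if $f_y$ is very free, it is free, so by \Cref{vf_avf} the sequence \eqref{exact_sequence_tangent_pullback} splits and $f^*\mc{T}_{X/k}\cong \mc{E}_y\oplus\mc{O}^{\oplus d}$. This bundle is globally generated, and since $\mc{E}_y$ is ample, $\mathfrak{z}(f^*\mc{T}_{X/k})=d$. (2)$\Rightarrow$(3): $f$ is free, so by \Cref{vf_avf} $f_y$ is free, the sequence splits, and $\mathfrak{z}(\mc{E}_y)=\mathfrak{z}(f^*\mc{T}_{X/k})-d=0$. A globally generated bundle on $\P^1$ with no trivial summands is ample, so $f_y$ is very free.

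For (1)$\iff$(3), first describe $\RC(X/Y)$ near $[f]$. By \eqref{eq:hom-const-representable} it is open and closed in $\RC(X)$, with tangent space $H^0(f^*\mc{T}_{X/k})$ and obstruction space $H^1(f^*\mc{T}_{X/k})$. Locally it is also $\RC(X_y)$ fibered over $Y$, i.e.\ the relative Hom scheme $\underline{\on{Hom}}_Y(\P^1_Y,X)$, which is smooth over $Y$ at $[f]$ when $H^1(\mc{E}_y)=0$. The map $\on{ev}^{(2)}_Y$ is a morphism over $Y$, and $X\times_YX\to Y$ is smooth. By the fibrewise criterion for smoothness (flatness over $Y$ being automatic once the source is smooth over $Y$ and the fibre map is smooth, or via the infinitesimal lifting criterion), smoothness of $\on{ev}^{(2)}_Y$ at the point then reduces to smoothness of the fibre map $\P^1\times\P^1\times\RC(X_y)\to X_y\times X_y$ at $(t,t',[f_y])$.

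For the fibre map, the standard deformation theory of maps with two marked points (as in \cite[II.3]{kollar1996rational}) applies. Its relative deformations fixing the images of $t,t'$ are governed by $\mc{E}_y(-t-t')$: the relative tangent space is $H^0(\mc{E}_y(-t-t'))$ together with the motion of the points, and the relative obstructions lie in $H^1(\mc{E}_y(-t-t'))$. If $f_y$ is very free, then $\mc{E}_y(-t-t')$ has all degrees $\geqslant -1$, so the $H^1$ vanishes. Hence the morphism is smooth at the point, and the source is smooth at $[f]$ by \Cref{hom_smooth_very_free}. Conversely, suppose the point is smooth for $\on{ev}^{(2)}_Y$. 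Then the differential is surjective, and its restriction to the $\RC$ factor, $H^0(f^*\mc{T}_{X/k})\to T_{X\times_YX}$, is $\sigma\mapsto(\sigma(t),\sigma(t'))$, where the target is the fibre product of tangent spaces over $T_yY$. The $\P^1$-factors contribute only $df(T_t)$ and $df(T_{t'})$. Restricting to the $\mc{E}_y$ part forces $H^0(\mc{E}_y)\to \mc{E}_y|_t\oplus\mc{E}_y|_{t'}$ to be surjective modulo those two lines. This gives global generation of $\mc{E}_y$ and excludes both $\mc{O}$ summands and negative summands. A trivial summand $\mc{O}\subset \mc{E}_y$ has sections with equal values at $t$ and $t'$, so surjectivity fails unless the summand is absorbed by $df$. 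That cannot happen, since $df$ lands in $T_{\P^1}\cong\mc{O}(2)$-directions.

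The main obstacle is the converse (1)$\Rightarrow$(3). Here one must carefully separate the contributions of the $\P^1\times\P^1$ factors and of the base directions $T_yY$, and use smoothness of the source at the point (which holds because a smooth morphism to a smooth target has smooth source) to equate smoothness of the morphism with surjectivity of the differential.
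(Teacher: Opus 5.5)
Your proposal is correct, and it is partly a different route from the paper. Your (2)$\iff$(3) is the paper's argument via \Cref{vf_avf}. Your (1)$\Rightarrow$(3) is essentially the paper's tangent-map computation, with two variations:
\begin{itemize}
\item You restrict directly to the vertical part $T_t\oplus T_{t'}\oplus H^0(\mc{E}_y)\to \mc{E}_y|_t\oplus\mc{E}_y|_{t'}$. This is legitimate: a surjection stays surjective onto any subspace when restricted to its preimage, and $H^0$ is left exact. The paper instead first proves global generation of $f^*\mc{T}_{X/k}$ and splits \eqref{exact_sequence_tangent_pullback}.
\item You kill the $\P^1$-contributions using $\on{Hom}(\mc{O}(2),\mc{O}(b))=0$ for $b\leqslant 1$. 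Your phrase ``$df$ lands in $\mc{O}(2)$-directions'' should say this explicitly, and you need it for negative summands as well as trivial ones. The paper absorbs $(T_tf(u),T_{t'}f(u'))$ more cleanly: it picks $\xi\in H^0(\P^1,\mc{T}_{\P^1})$ with $\xi(t)=u$ and $\xi(t')=u'$, so the image of the differential equals the image of $\sigma\mapsto(\sigma(t),\sigma(t'))$.
\end{itemize}
The genuine difference is (3)$\Rightarrow$(1). You identify $\RC(X/Y)$ with $\underline{\on{Hom}}_Y(\P^1_Y,X)$ and use its smoothness over $Y$ at $[f]$, which follows from $H^1(\mc{E}_y)=0$. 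You then apply the fibrewise flatness criterion and two-pointed obstruction theory in $H^1(\mc{E}_y(-t-t'))$. The paper instead notes three things: the source is smooth at $(t,t',[f])$ by \Cref{hom_smooth_very_free}; $X\times_YX$ is smooth over $k$; and the differential is surjective, because $f^*\mc{T}_{X/k}\cong\bigoplus_{i>d}\mc{O}(a_i)\oplus\mc{O}^{\oplus d}$ with $a_i\geqslant 1$ evaluates surjectively at two distinct points summand by summand. The paper's route is more elementary and symmetric with the converse, since it needs only the tangent-space formula. Yours imports more deformation theory, but it shows why $\mc{E}_y(-t-t')$ governs the question and separates the base directions cleanly. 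It also gives smoothness of $\RC(X/Y)\to Y$ at $[f]$ along the way, which the paper deduces only afterwards in \Cref{smoothness_almost_very_free_locus}.
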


\begin{proof}
To simplify notation, we write $\mathcal{O}$ for $\mathcal{O}_{\mathbb{P}^1_{k'}}$. When $Y=\on{Spec}(k)$, \Cref{ev_smooth_almost_free} is proved in \cite[Chapter II, Proposition 3.5]{kollar1996rational}; here we follow the same proof strategy.

By \Cref{rmk:field-extension} and the compatibility of the smooth locus of a morphism under base change, we may assume that $k'=k$. We let $y \in Y(k)$ denote the image of $\P^1_k$ under $p \circ f$. The differential of $\on{ev}^{(2)}_Y$ at $(t,t',f)$ is given by
\begin{equation}\label{tangent_map_C}
\begin{array}{cccc}
T_{(t,t',[f])}(\operatorname{ev}^{(2)}_Y) \colon 
& 
T_{\P^1_k,t} \oplus T_{\P^1_k,t'} \oplus H^0(\P^1_k, f^*\mathcal{T}_{X/k}) 
& \longrightarrow 
& T_{X,f(t)} \times_{T_{Y,y}} T_{X,f(t')} \\
& (u,u',\sigma) 
& \longmapsto 
& \bigl((T_t f)(u) + \sigma(t),\; (T_{t'}f)(u') + \sigma(t')\bigr)
\end{array}
\end{equation}
since the map $\iota$ of \eqref{eq:hom-const-representable} is an open and closed immersion.

(2)$\iff$(3). This follows from \Cref{vf_avf}.

(1) $\Longrightarrow$ (2). By assumption, the map (\ref{tangent_map_C}) is surjective. Observe that $T_{X,f(t)} = (f^*\mathcal{T}_{X/k})_t$ and $T_{\P^1_k,t} = (\mathcal{T}_{\P^1_k/k})_t$. Moreover, we have a commutative diagram
\[
\begin{tikzcd}
H^0(\P^1_k, \mathcal{T}_{\P^1_k/k}) \arrow[r] \arrow[d] 
& T_{\P^1_k,t} \arrow[d,"T_t f"] \\
H^0(\P^1_k, f^*\mathcal{T}_{X/k}) \arrow[r] 
& T_{X,f(t)}
\end{tikzcd}
\]
where the horizontal maps are given by evaluation and the vertical maps are induced by $f$. It follows that the natural map
$H^0(\P^1_k, f^*\mathcal{T}_{X/k}) 
\to 
T_{X,f(t)} \times_{T_{Y,y}} T_{X,f(t')}$
is also surjective. Since $X \to Y$ is smooth, the map $T_{X,f(t')} \to T_{Y,y}$ is surjective, hence $f^*\mathcal{T}_{X/k}$ is globally generated, that is, $f$ is free. Let $X_y \coloneqq X \times_Y y$. By \Cref{vf_avf}, the induced $k$-rational curve 
$f_y \colon \P^1_k \to X_y$
is free and (\ref{exact_sequence_tangent_pullback}) splits. We fix a splitting $f^*\mathcal{T}_{X/k}=(\oplus_{i>d} \mathcal{O}(a_i)) \oplus \mathcal{O}^{\oplus d}$ of (\ref{exact_sequence_tangent_pullback}). Under this identification, the surjection $f^*\mathcal{T}_{X/k}\to \mathcal{O}^{\oplus d}$ is given by the projection onto the last summand. This yields an identification of the map $T_{f(t)}p\colon T_{X,f(t)}\to T_{Y,y}$ with the projection onto the last factor $(\oplus_{i>d} \mathcal{O}(a_i)_t) \oplus k^{\oplus d} \to k^{\oplus d}$. The surjectivity of the map (\ref{tangent_map_C}) implies, for all $i>d$, the surjectivity of the maps $H^0(\mc{O}(a_i))\to \mc{O}(a_i)_t\oplus \mc{O}(a_i)_{t'}$. Since $t\neq t'$, this implies that $a_i\geqslant 1$ for all $i>d$, and hence that $\mathfrak{z}(f^*\mc{T}_{X/k})=d$.

(2) $\Longrightarrow$ (1). By \Cref{hom_smooth_very_free} and the fact that $\RC(X/Y)$ is an open subscheme of $\RC(X)$, the $k$-scheme $\P^1_k\times_k\P^1_k\times_k\RC(X/Y)$ is smooth at $(t,t',[f])$. Again by \Cref{vf_avf}, the extension (\ref{exact_sequence_tangent_pullback}) splits, and we can write
$f^*\mathcal{T}_{X/k} = \bigoplus_{i>d} \mathcal{O}(a_i) \oplus \mathcal{O}^{\oplus d}$, such that $a_i\geqslant 1$ for all $i>d$, so that the pullback of the projection $\mathcal{T}_{X/k} \to p^*\mathcal{T}_{Y/k}$ corresponds to the projection onto the last factor. As the evaluation maps
$H^0(\mathbb{P}^1_k, \mathcal{O}(a_i)) \to \mathcal{O}(a_i)_t \oplus \mathcal{O}(a_i)_{t'}$ 
are surjective for all $i>d$, it follows that the tangent map (\ref{tangent_map_C}) is surjective, and hence that $\mathrm{ev}^{(2)}_Y$ is smooth at $(t,t',[f])$.
\end{proof}

\subsection{Schemes of free curves}

Let $d\geqslant 0$, let $X$ be a smooth quasi-projective $k$-variety,
let $Y$ be a $k$-variety, and let $p\colon X\to Y$ be a morphism. We denote by $\RC(X/Y)_d$ the subfunctor of $\RC(X/Y)$ which associates to a $k$-scheme $S$ the set
\begin{equation}\label{eq:d}
\begin{aligned}
\RC(X/Y)_d(S) 
\coloneqq  \bigl\{[f] \in \RC(X/Y)(S)  \bigm|\; \text{$f_s$ is free and $\mathfrak{z}((f_s)^*\mc{T}_{X/k})\leqslant d$ for all $s\in S$}\bigr\}. \\
\end{aligned}
\end{equation}
We set 
\[\RC(X/Y)_{\mathrm{vf}}\coloneqq \RC(X/Y)_0, \qquad \RC(X/Y)_{\mathrm{avf}}\coloneqq \RC(X/Y)_1.\] Thus, for every $k$-scheme $S$:
\begin{equation}\label{eq:vf}
\RC(X/Y)_{\mathrm{vf}}(S) \coloneqq 
\left\{
[f] \in \RC(X/Y)(S) \;\middle|\; f_s \text{ is very free for all } s \in S
\right\},
\end{equation}
\begin{equation}\label{eq:avf}
\RC(X/Y)_{\mathrm{avf}}(S) \coloneqq 
\left\{
[f] \in \RC(X/Y)(S)\;\middle|\; f_s \text{ is almost very free for all } s \in S\right\}.
\end{equation}
It follows from \Cref{rmk:field-extension} that \eqref{eq:d}-\eqref{eq:avf} determine well-defined subfunctors of $\RC(X/Y)$.

\begin{lemma}\label{smoothness_almost_very_free_locus} Let $k$ be a field, let $d\geqslant 0$ be an integer, let $X$ be a smooth quasi-projective $k$-variety, let $Y$ be a smooth $k$-variety of pure dimension $d$, and let $p \colon X \to Y$ be a smooth morphism. 
The functor $\RC(X/Y)_d$ is represented by a smooth open subscheme of $\RC(X/Y)$.
Moreover, the map $\RC(X/Y)_d \to Y$ is smooth.
\end{lemma}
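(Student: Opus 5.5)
We must show three things about $\RC(X/Y)_d$: it is an open subscheme of $\RC(X/Y)$, it is smooth, and the map $\RC(X/Y)_d\to Y$ is smooth.

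\textbf{Openness.} Consider the universal family $F\colon \P^1_{\RC(X/Y)}\to X$, write $\pi\colon \P^1_{\RC(X/Y)}\to \RC(X/Y)$ for the projection, and set $\mc{E}\coloneqq F^*\mc{T}_{X/k}$. For a point $s$, the bundle $\mc{E}_s=\bigoplus_i \mc{O}(a_i)$ is free with $\mathfrak{z}(\mc{E}_s)\leqslant d$ exactly when both of the following hold:
\[
H^1\bigl(\P^1_{k(s)},\mc{E}_s(-1)\bigr)=0,
\qquad
h^0\bigl(\P^1_{k(s)},\mc{E}_s(-2)\bigr)\leqslant \deg \mc{E}_s - r + d,
\]
where $r$ is the rank.
\begin{itemize}
\item The first condition is equivalent to $a_i\geqslant 0$ for all $i$, i.e.\ to $f_s$ being free.
\item Once all $a_i\geqslant 0$, we have $h^0(\mc{E}_s(-2))=\sum_i (a_i-1)+\#\{i : a_i=0\}=\deg\mc{E}_s-r+\mathfrak{z}(\mc{E}_s)$. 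So the second condition is exactly $\mathfrak{z}(\mc{E}_s)\leqslant d$.
\end{itemize}
The degree is locally constant in $s$. Both conditions are open in $s$ by upper semicontinuity of cohomology for the flat proper morphism $\pi$ (one may restrict to each quasi-compact connected component of $\RC(X/Y)$). Hence $\RC(X/Y)_d$ is an open subscheme of $\RC(X/Y)$. It represents the subfunctor \eqref{eq:d}, because that subfunctor is defined pointwise on $S$ and \Cref{rmk:field-extension} shows the pointwise conditions are insensitive to field extension.

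\textbf{Smoothness of $\RC(X/Y)_d$.} Every point of $\RC(X/Y)_d$ corresponds to a free curve. By \Cref{hom_smooth_very_free}, such a point lies in the smooth locus of $\RC(X)$. Since $\iota$ is an open immersion, $\RC(X/Y)_d$ is smooth over $k$.

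\textbf{Smoothness of $\RC(X/Y)_d\to Y$.} Both source and target are smooth over $k$, so by the Jacobian criterion it suffices to check that the map is surjective on tangent spaces at each point. After base change we may assume the point is $[f]$, defined over $k'=k$, and let $y$ be its image. The tangent map is
\[
H^0(\P^1_k,f^*\mc{T}_{X/k})\longrightarrow T_{Y,y},\qquad \sigma\mapsto T p(\sigma(t)),
\]
which is independent of $t$ because $p\circ f$ is constant. By \Cref{vf_avf}, the sequence \eqref{exact_sequence_tangent_pullback} splits, so this map identifies with
\[
H^0(f^*\mc{T}_{X/k})\longrightarrow H^0(\mc{O}^{\oplus d})=k^d,
\]
which is surjective. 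The map $\RC(X/Y)_d\to Y$ is therefore smooth at every point.

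\textbf{Main obstacle.} The step needing the most care is the tangent-space computation for $\RC(X/Y)\to Y$, that is, identifying $T_{[f]}\RC(X/Y)$ with $H^0(f^*\mc{T}_{X/k})$. This holds because $\iota$ is open, and one then checks that the differential of $p\circ(-)$ is given by pushing forward sections to $H^0(p^*\mc{T}_{Y/k}|_{\P^1})=T_{Y,y}$. A fallback, if the Jacobian argument is delicate over imperfect fields, is to deduce smoothness of the map from flatness plus smooth fibers.
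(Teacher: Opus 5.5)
Your proof is correct, but it takes a different route from the paper's.

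\textbf{The paper's route.} The paper derives everything from \Cref{ev_smooth_almost_free}. It lets $W$ be the smooth locus of $\mathrm{ev}^{(2)}_Y$ on $V\times_k\RC(X/Y)$, with $V\coloneqq \P^1_k\times_k\P^1_k\setminus\Delta$. It then sets $U_d$ to be the image of $W$ under the flat projection to $\RC(X/Y)$, so $U_d$ is open. The composite $W\to X\times_YX\to Y$ is smooth and factors through $U_d\to Y$, so smoothness descends along the fppf surjection $W\to U_d$. Smoothness of $U_d$ over $k$ follows at once, and \Cref{ev_smooth_almost_free} (with $\dim Y=d$) identifies $U_d$ with $\RC(X/Y)_d$.

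\textbf{Your route.} You treat the three assertions separately.
\begin{enumerate}
\item Openness comes from semicontinuity, via a neat cohomological encoding of ``free and $\mathfrak{z}\leqslant d$'' as $H^1(\mc{E}_s(-1))=0$ together with $h^0(\mc{E}_s(-2))\leqslant \deg\mc{E}_s-r+d$.
\item Smoothness of the scheme comes from \Cref{hom_smooth_very_free}.
\item Smoothness over $Y$ comes from surjectivity of $H^0(f^*\mc{T}_{X/k})\to H^0(\mc{O}^{\oplus d})=T_{Y,y}$. This needs only the vanishing of $H^1$ of the vertical tangent bundle, i.e.\ freeness.
\end{enumerate}

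\textbf{Comparison.} Your route is more elementary and proves slightly more: the whole free locus of $\RC(X/Y)$ is open and smooth over $Y$, and the bound on $\mathfrak{z}$ only cuts out a further open subset. The paper's route avoids semicontinuity and a separate computation of the differential of $\RC(X/Y)\to Y$, because the tangent-space analysis is already packaged in \Cref{ev_smooth_almost_free}. It also exhibits $\RC(X/Y)_d$ directly as the projection of the smooth locus of the two-point evaluation map.

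Your fallback is not needed. At rational points of smooth $k$-schemes, the differential criterion for smoothness of a morphism holds over any field, and you already reduced to rational points by base change.
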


\begin{proof}
Let $V\subset \P^1_k\times_k\P^1_k$ be the complement of the diagonal, let $W\subset V\times_k\RC(X/Y)$ be the smooth locus of the restriction of the map $\mathrm{ev}_Y^{(2)}$ of (\ref{evaluation-map-2-y}) to $V \times_k \RC(X/Y)$, let $q \colon V \times_k \RC(X/Y) \to \RC(X/Y)$ be the second projection, and define $U_d\coloneqq q(W)\subset  \RC(X/Y)$. Since $q$ is flat, $U_d$ is open in $\RC(X/Y)$. The composite 
\[W\xlongrightarrow{\on{ev}^{(2)}_Y|_W}X\times_YX \longrightarrow Y\]
is smooth and factors through the morphism $U_d\to Y$. Since the morphism $W\to U_d$ is fppf, it follows that the morphism $U_d\to Y$ is smooth. In particular, $U_d$ is smooth over $k$.

It remains to prove that $U_d$ represents $\RC(X/Y)_d$. For this, let $S$ be a $k$-scheme. By \Cref{ev_smooth_almost_free} and the fact that $\dim(Y)=d$, a morphism 
$\varphi \colon S \to \RC(X/Y)$ belongs to $\RC(X/Y)_{d}$ if and only if it set-theoretically factors through $U_d$, that is, since $U_d$ is open in $\RC(X/Y)$, if and only if $\varphi$ scheme-theoretically factors through $U_d$. Therefore $U_d$ represents $\RC(X/Y)_d$, as desired.
\end{proof}

\section{Vertical teeth in separably rationally connected fibrations}\label{sec:3}

\subsection{Separably rationally connected varieties}

Let $k$ be a field, let $\cl{k}$ be an algebraic closure of $k$, and let $X$ be a geometrically integral $k$-variety. Following \cite[IV, (3.1.2) and (3.2.3)]{kollar1996rational}, we say that $X$ is \emph{separably unirational} if there exists an integer $n\geqslant 0$ and a generically smooth dominant rational map $\mathbb{P}^n_k \dashrightarrow X$, and that $X$ is \emph{separably rationally connected}  if there exists an integral $k$-variety $M$ and a morphism
$e\colon \mathbb{P}^1_k \times_k M \to X$
such that the morphism
\begin{equation}\label{eq:src-e-2}
    e^{(2)}\colon \mathbb{P}^1_k \times_k \mathbb{P}^1_k \times_k M \longrightarrow X \times_k X,\qquad (t,t',m)\mapsto (e(t,m),e(t',m))
\end{equation}
is dominant and generically smooth.

The following characterization of separably rationally connected varieties is well known, at least when $k$ is algebraically closed.

\begin{prop}\label{very_free_src}
Let $k$ be a field, and let $X$ be a smooth projective geometrically integral $k$-variety of positive dimension. The following are equivalent.
\begin{enumerate}
    \item The $k$-variety $X$ is separably rationally connected.
    \item There exists a field extension $k'/k$ and a very free rational curve 
$f \colon \mathbb{P}^1_{k'} \to X$.
    \item There exists a finite field extension $k'/k$ and a very free rational curve 
$f \colon \mathbb{P}^1_{k'} \to X$.
\end{enumerate}
\end{prop}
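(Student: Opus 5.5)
The plan is to prove the cycle (3)$\Rightarrow$(2)$\Rightarrow$(1)$\Rightarrow$(3). The implication (3)$\Rightarrow$(2) is trivial.

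For (2)$\Rightarrow$(1), I first reduce to a curve over a finitely generated extension of $k$. A very free $f\colon \P^1_{k'}\to X$ is defined over a finitely generated subextension $k_0\subset k'$, and by \Cref{rmk:field-extension} it stays very free there. So I may assume $k'$ is the function field of an integral $k$-variety. By \Cref{hom_smooth_very_free}, the point $[f]$ lies in the smooth locus of $\RC(X)$. Let $M$ be the irreducible component of $\RC(X)$ through the image of $[f]$, or rather a smooth open neighbourhood of that image inside it; $M$ is an integral $k$-variety.

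Take $e\colon \P^1_k\times_k M\to X$ to be the evaluation map and $e^{(2)}$ the map $\on{ev}^{(2)}$ restricted to $M$. Apply \Cref{ev_smooth_almost_free} with $Y=\on{Spec}(k)$ and $d=0$: for distinct $k'$-points $t,t'$, the point $(t,t',[f])$ lies in the smooth locus of $e^{(2)}$. Since $\P^1_{k}\times\P^1_k\times M$ is integral, this smooth locus is a nonempty open subset. Hence $e^{(2)}$ is dominant and generically smooth. Both spaces have the same kind of generic point, and smooth morphisms are open, so the image of the smooth locus is a nonempty open subset of the irreducible scheme $X\times_k X$. Thus $X$ is separably rationally connected. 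One checks that the definition allows $M$ to be any integral variety, so no properness of $M$ is needed.

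For (1)$\Rightarrow$(3), start from $e\colon\P^1_k\times M\to X$ with $e^{(2)}$ dominant and generically smooth. Let $U\subset \P^1\times\P^1\times M$ be the nonempty smooth locus of $e^{(2)}$, after shrinking $M$ to be smooth. The key step is the converse tangent computation. At a point $(t,t',m)$ of $U$ with $t\ne t'$, the differential
\[T_t\P^1\oplus T_{t'}\P^1\oplus T_mM\to T_{X,e(t,m)}\oplus T_{X,e(t',m)}\]
is surjective. Since $T_mM$ maps through $H^0(\P^1,e_m^*\mathcal{T}_{X/k})$, the argument of (1)$\Rightarrow$(2) in \Cref{ev_smooth_almost_free}, with $d=0$, applies. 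It shows that $e_m^*\mathcal{T}_{X/k}$ is globally generated and that the evaluation $H^0\to (\cdot)_t\oplus(\cdot)_{t'}$ is surjective. This forces all $a_i\ge1$, so $e_m$ is very free, provided $e_m$ is nonconstant. Nonconstancy is automatic from dominance of $e^{(2)}$, since $\dim X>0$.

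It remains to find such a point with finite residue field. The set of $(t,t',m)\in U$ with $t\neq t'$ is a nonempty open subset of a variety of finite type over $k$, so it contains a closed point. That closed point has residue field $k'$ finite over $k$. It gives distinct $k'$-points $t,t'$ and a morphism $e_m\colon\P^1_{k'}\to X$, and the computation above, done over $k'$, shows that $e_m$ is very free.

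The main obstacle is the subtlety in this tangent computation when $M$ is not $\RC(X)$ itself. The differential factors through the Kodaira–Spencer-type map $T_mM\to H^0(e_m^*\mathcal{T}_X)$, so surjectivity for $M$ implies surjectivity for $H^0$. This implication goes in the direction we need, so it should be fine; one only has to write out the deformation-theoretic identification of tangent vectors over $k'$.
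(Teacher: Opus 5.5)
Your proof is correct. For (2)$\Rightarrow$(1) you follow the paper's argument: take a smooth integral open neighbourhood of $[f]$ in $\RC(X)$, then apply \Cref{ev_smooth_almost_free} with $Y=\Spec k$. The preliminary reduction to a finitely generated $k'$ is harmless but not needed.

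For (1)$\Rightarrow$(3) you take a genuinely different route.

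The paper first shows that $X_{\cl{k}}$ is separably rationally connected. It does this by shrinking $M$ to be smooth and restricting $e_{\cl{k}}$ to an irreducible component of $M_{\cl{k}}$. It then invokes \cite[IV, Theorem 3.7]{kollar1996rational} over the algebraically closed field to get a very free $\cl{k}$-curve. Finally, it descends this curve to a finite subextension, with very freeness preserved by \Cref{rmk:field-extension}.

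You instead work directly over $k$:
\begin{enumerate}
\item pick a closed point of the smooth locus of $e^{(2)}$ lying off $\Delta\times M$;
\item factor $e^{(2)}$ through $\on{ev}^{(2)}$ via the classifying morphism $M\to\RC(X)$;
\item rerun the differential argument of (1)$\Rightarrow$(2) in \Cref{ev_smooth_almost_free}.
\end{enumerate}
That argument does use only surjectivity of the differential, not smoothness of $\RC(X)$ at $[e_m]$. The $\P^1$-directions are absorbed because $H^0(\P^1,\mathcal{T}_{\P^1})\to T_t\P^1\oplus T_{t'}\P^1$ is onto for $t\neq t'$.

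Your route stays within the paper's own machinery and essentially reproves the needed direction of Koll\'ar's theorem over an arbitrary field. It also gives slightly more: an open dense set of members $e_m$ are very free. Taking $M$ smooth, and using that closed points with separable residue field are dense in a smooth variety, one can even take $k'/k$ separable. The paper's route is shorter on the page but outsources the key step to the literature.

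One small correction: your caveat about nonconstancy is unnecessary. Dominance of $e^{(2)}$ alone would not give nonconstancy at a specific $m$ anyway. The condition $a_i\geqslant 1$ for all $i$ is already the definition of very free, and it rules out constant $e_m$ because $\dim X>0$.
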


When $k$ is algebraically closed, this is proved in \cite[IV, Theorem 3.7]{kollar1996rational}.

\begin{proof}
(1) $\Longrightarrow$ (3). We first show that the $\cl{k}$-variety $X_{\cl{k}}$ is separably rationally connected. Let $M$ be an integral $k$-variety and $e\colon \P^1_k\times_kM\to X$ be a morphism such that the corresponding morphism $e^{(2)}$ of \eqref{eq:src-e-2} is dominant and generically smooth. In particular, $M$ contains a smooth integral dense open subscheme $U\subset M$. Replacing $M$ by $U$, we may assume that $M$ is smooth. Let $M'$ be one of the irreducible components of the smooth $\cl{k}$-scheme $M_{\cl{k}}$, and let $e'\colon \P^1_{\cl{k}}\times_{\cl{k}}M'\to X_{\cl{k}}$ be the restriction of $e_{\cl{k}}$. Then $M'$ is integral and the corresponding morphism $(e')^{(2)}$ is dominant and generically smooth. Thus $X_{\cl{k}}$ is separably rationally connected, as desired. By \cite[IV, Theorem 3.7]{kollar1996rational}, there exists a very free $\cl{k}$-rational curve $f \colon \mathbb{P}^1_{\cl{k}} \to X$. This morphism descends to a finite subextension $k'/k$.

(3) $\Longrightarrow$ (2). This is clear.

(2) $\Longrightarrow$ (1). We follow the proof of \cite[IV, Theorem~3.7]{kollar1996rational}. Let $k'/k$ be a field extension, and let $f \colon \P^1_{k'} \to X$ be a very free $k'$-rational curve. By \Cref{hom_smooth_very_free}, there exists a smooth integral open subscheme $U \subset \RC(X)$ such that  $[f]\in U(k')$. 
For any two distinct $k'$-points $t,t'\in \P^1_k(k')$, by \Cref{ev_smooth_almost_free} (with $Y=\on{Spec}(k)$) the morphism $\on{ev}^{(2)}_{\Spec k}=\on{ev}^{(2)}$ of (\ref{evaluation-map-2-y}) is smooth at the image of $(t,t',[f])\in (\P^1_k \times \P^1_k \times_k U)(k')$. It follows that the restriction of $\operatorname{ev}^{(2)}$ to $\P^1_k \times \P^1_k \times U$ is generically smooth and dominant. Therefore $X$ is separably rationally connected.
\end{proof}

\begin{cor}\label{src_invariant_base_change}
Let $k$ be a field, let $k'/k$ be a field extension, and let $X$ be a smooth projective geometrically integral $k$-variety. The $k$-variety $X$ is separably rationally connected if and only if so is $X_{k'}$.
\end{cor}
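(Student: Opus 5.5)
The plan is to use the characterization in \Cref{very_free_src}, whose condition (2) is visibly insensitive to extending the ground field by \Cref{rmk:field-extension}. First we dispose of the degenerate case. If $\dim X=0$, then $X$, being geometrically integral, is $\operatorname{Spec}(k)$. Separable rational connectedness of a point then holds trivially on both sides: take $M=\operatorname{Spec}(k)$, respectively $M=\operatorname{Spec}(k')$. So we may assume $\dim X>0$. Since smoothness, projectivity and geometric integrality are stable under base change and $\dim X_{k'}=\dim X$, \Cref{very_free_src} applies both to $X$ over $k$ and to $X_{k'}$ over $k'$.

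Suppose first that $X$ is separably rationally connected. By \Cref{very_free_src} (1)$\Rightarrow$(2), there are a field extension $k''/k$ and a very free $k''$-rational curve $f\colon\P^1_{k''}\to X$. Choose a common extension $L$ containing both $k'$ and $k''$; for instance, take a residue field of a point of $\operatorname{Spec}(k'\otimes_k k'')$. Consider the composite $\P^1_L\to\P^1_{k''}\to X$. Since $\mathcal{T}_{X/k}$ pulls back compatibly, \Cref{rmk:field-extension} shows this composite is very free. It factors through $X_{k'}$, giving an $L$-rational curve $g\colon \P^1_L\to X_{k'}$ over $k'$, and we have $g^*\mathcal{T}_{X_{k'}/k'}\cong (\mathrm{pr}\circ g)^*\mathcal{T}_{X/k}$ because $\mathcal{T}_{X_{k'}/k'}$ is the pullback of $\mathcal{T}_{X/k}$ along the projection $\mathrm{pr}\colon X_{k'}\to X$. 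Hence $g$ is very free, and \Cref{very_free_src} (2)$\Rightarrow$(1) over $k'$ shows that $X_{k'}$ is separably rationally connected.

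For the converse, suppose $X_{k'}$ is separably rationally connected. By \Cref{very_free_src} over $k'$, there is an extension $k''/k'$ and a very free $k''$-rational curve $g\colon\P^1_{k''}\to X_{k'}$. Composing with $X_{k'}\to X$ gives a $k''$-rational curve on $X$ over $k$, since $k''/k$ is also a field extension. By the same identification of tangent sheaves, this curve is very free. Then \Cref{very_free_src} (2)$\Rightarrow$(1) over $k$ shows that $X$ is separably rationally connected.

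The only point needing care is this identification $g^*\mathcal{T}_{X_{k'}/k'}=(\mathrm{pr}\circ g)^*\mathcal{T}_{X/k}$. It follows from $\Omega_{X_{k'}/k'}=\mathrm{pr}^*\Omega_{X/k}$ together with the fact that $\Omega_{X/k}$ is locally free, so that taking duals commutes with pullback. With that in place, the argument is short and presents no real obstacle; essentially all of the work is already contained in \Cref{very_free_src}.
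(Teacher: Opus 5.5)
Your proposal is correct and follows the paper's proof. The paper also treats $\dim X=0$ as trivial and, in positive dimension, combines condition (2) of \Cref{very_free_src} with \Cref{rmk:field-extension}; you additionally spell out the common extension $L$ of $k'$ and $k''$ and the identification $g^*\mathcal{T}_{X_{k'}/k'}\cong(\mathrm{pr}\circ g)^*\mathcal{T}_{X/k}$.
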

\begin{proof}
This is clear when $\on{dim}(X)=0$, and it is a consequence of \Cref{very_free_src} and \Cref{rmk:field-extension} when $\on{dim}(X)>0$.
\end{proof}

\begin{rmk}\label{unirational_src}
    If $X$ is a separably unirational projective $k$-variety, then $X$ is separably rationally connected; see \cite[IV, Example (3.2.6.2)]{kollar1996rational}. Indeed, let $f\colon \P^n_k\dashrightarrow X$ be a generically smooth dominant rational map, let $e$, $e^{(2)}$ and $M$ be as in \eqref{eq:src-e-2} for $\P^n_k$. Since $e^{(2)}$ is generically smooth, shrinking $M$ if necessary, we may assume that $M$ is smooth. The composite \[(f\circ e)^{(2)}=(f\times f)\circ e^{(2)}\colon \P^1_k\times_k\P^1_k\times_kM\longrightarrow \P^n_k\times_k\P^n_k\dashrightarrow X\times_kX\] is a well-defined generically smooth dominant rational map. It remains to show that, shrinking $M$ if necessary, the composite rational map $f\circ e\colon \P^1_k\times_kM\dashrightarrow X$ is a morphism. Since $X$ is projective, the rational map $f\circ e$ is defined away from a closed subset $Z\subset \P^1_k\times_kM$ of codimension $\geqslant 2$. It follows that $\on{dim}(Z)<\on{dim}(M)$, and hence the image of $Z$ under the second projection $p\colon \P^1_k\times_kM\to M$ is a projective closed subset of $M$. Replacing $M$ by the dense open subscheme $M\setminus p(Z)$, we may assume that $f\circ e$ is a morphism, as desired.
\end{rmk}

\subsection{The tangent evaluation morphism}

Let $k$ be a field, let $p\colon X\to Y$ be a smooth morphism of $k$-varieties, and let $d\geqslant 0$. For every $k$-scheme $S$ and every $[f]\in \RC(X/Y)_d(S)$, let
$0_S\colon S\to \P^1_S$ be the section induced by $0\in \P^1_k(k)$, and set $x\coloneqq f\circ 0_S\colon S\to X$. Let \[D_{0,S}=\Spec_S(\mathcal{O}_S[t]/(t^2))\subset \P^1_S\] be the first-order relative infinitesimal neighbourhood of $0_S$. Since $[f]\in \RC(X/Y)_d(S)$, the composite $p\circ f$ is constant, and hence the restriction $f|_{D_{0,S}}$ defines a relative first-order infinitesimal deformation of $x$ over $Y$, and hence a section
\[
    \mathcal{T}_{0,Y}^{d,X}([f])
    \in \Gamma(S,x^*\mathcal{T}_{X/Y}),
\]
or, equivalently, an $S$-point of $T_{X/Y}$ lying over $x$. This construction is compatible with base change in $S$, and hence defines a
$k$-morphism
\[
    \mathcal{T}_{0,Y}^{d,X}\colon \RC(X/Y)_d\longrightarrow T_{X/Y}
\]
fitting into the commutative triangle
\[
\begin{tikzcd}
\RC(X/Y)_d \arrow[rr, "\mathcal{T}_{0,Y}^{d,X}"] \arrow[dr, "\operatorname{ev}_{0,Y}^d"'] 
&& T_{X/Y} \arrow[dl, "\pi"] \\
& X
\end{tikzcd}
\]
where $\pi\colon T_{X/Y}\to X$ is the projection map.
For every field extension $k'/k$ and every $[f]\in \RC(X/Y)_d(k')$, the morphism $\mathcal{T}_{0,Y}^{d,X}$ sends $[f]$ to its tangent vector at
$0$, denoted $T_0(f)(\partial/\partial t)$.

\begin{lemma}\label{lem:ev-surj}
Let $k$ be a field, let $d \geqslant 0$ be an integer, let $Y$ be a smooth $k$-variety of pure dimension $d$. Let $p \colon X \to Y$ be a smooth projective morphism such that for every $y \in Y$ the $k(y)$-variety $X_y\coloneqq X\times_Yy$ is separably rationally connected. 

\begin{enumerate}
\item  The evaluation tangent map 
\[
    \mathcal{T}_{0,Y}^{d,X} \colon \RC(X/Y)_d \longrightarrow T_{X/Y},
    \qquad [f] \longmapsto T_{0}(f)(\partial/\partial t)
\]
is smooth and surjective. 

\item The evaluation tangent map 
\[
    \mathcal{T}_{0,Y}^{d, X\times_k \P^3_k}|_{\RCE( X \times_k \P^3_k/Y)_d} \colon \RCE( X \times_k \P^3_k/Y)_d \longrightarrow T_{X \times_k \P^3_k/Y},
    \qquad [f] \longmapsto T_{0}(f)(\partial/\partial t)
\]
is smooth and its image contains $T_{X \times_k \P^3_k/Y}\setminus (p_1^*T_{X/Y}\oplus \{0\})$, where $p_1\colon X \times_k \P^3_k\to X$ denotes the first projection.
\end{enumerate}
\end{lemma}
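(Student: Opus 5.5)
To prove (1), I will verify smoothness via the infinitesimal lifting criterion at each point, and surjectivity by producing, through every point of $T_{X/Y}$, a curve with that tangent vector at $0$. Both $\RC(X/Y)_d$ and $T_{X/Y}$ are smooth over $Y$: the first by \Cref{smoothness_almost_very_free_locus}, the second because it is a vector bundle over the smooth $Y$-scheme $X$. So it suffices to check that the differential of $\mathcal{T}_{0,Y}^{d,X}$ is surjective at each $k'$-point $[f]$, after reducing to $k'=k$ by \Cref{rmk:field-extension}.

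The tangent space of $\RC(X/Y)_d$ at $[f]$ is the preimage of $T_{Y,y}$ in $H^0(\P^1,f^*\mathcal{T}_{X/k})$. By \Cref{ev_smooth_almost_free}, \Cref{vf_avf} and the definition of $\RC(X/Y)_d$, the curve $f_y$ is very free. Hence (\ref{exact_sequence_tangent_pullback}) splits as $f^*\mathcal{T}_{X/k}=f_y^*\mathcal{T}_{X_y}\oplus\mathcal{O}^{\oplus d}$, with $f_y^*\mathcal{T}_{X_y}=\bigoplus\mathcal{O}(a_i)$ and all $a_i\geqslant1$.

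The tangent space of $T_{X/Y}$ at $v=T_0f(\partial_t)$ is an extension of $T_{X,x}$ by $(\mathcal{T}_{X/Y})_x$. The differential sends a section $\sigma$ to the pair (value $\sigma(0)$, first-order jet of the vertical part of $\sigma$ at $0$). The horizontal directions $T_{Y,y}$ are hit by the constant $\mathcal{O}^{\oplus d}$ summand. For the vertical part, I need the map
\[
H^0(\mathcal{O}(a))\longrightarrow \mathcal{O}(a)\otimes\mathcal{O}/\mathfrak{m}_0^2
\]
to be surjective, which holds since $a\geqslant1$. This is exactly where very freeness is used.

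For surjectivity, fix a point $v\in T_{X/Y}$ over $x\in X$ with image $y\in Y$. The fiber $X_y$ is separably rationally connected, so by \Cref{very_free_src} and \Cref{src_invariant_base_change} there is a very free curve in $X_y$ over a finite extension, and by \cite[IV]{kollar1996rational} such curves pass through any given point. Since $\RC(X_y)_{\mathrm{vf}}\to X_y$ is smooth, it is open, and all its fibers are nonempty. Given a very free curve $g$ through $x$ with tangent $w\neq0$, I will compose with automorphisms of $\P^1$ fixing $0$ (to rescale $w$) and with deformations of $g$ fixing $x$. The tangent directions at $x$ obtained this way form an open orbit under the smooth morphism above, and because $\P^1_{k'}\to\P^1$ of degree $2$ (e.g.\ $t\mapsto t^2$) is still very free after composition, the zero tangent vector is attained as well. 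Thus every direction is realized over $\overline{k}$. Since surjectivity is a statement about points over $\overline{k}$, this settles (1) once I show the image of the smooth map is all of $T_{X/Y}$. I expect this step to be the main obstacle: the open image need not be closed, and I must use that very free curves through $x$ realize every tangent direction, via a comb/gluing argument as in \cite[IV.3.9]{kollar1996rational}, for instance gluing a very free curve with a line in the prescribed direction inside a product and smoothing.

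For (2), work with $X\times\P^3\to Y$, whose fibers $X_y\times\P^3$ are still separably rationally connected. The locus $\RCE$ is open, so smoothness follows from (1). For the image, take $v=(v_1,v_2)$ with $v_2\neq0$. By (1) choose $f=(g,h)$ with tangent $v$, where $h$ is a very free curve in $\P^3$ with $T_0h=v_2\neq0$. Then a general deformation of $f$ with fixed $0$-jet is an embedding, since $\dim\P^3=3$ and the standard general-position argument of \cite[II.3.14]{kollar1996rational} makes $h$ itself an embedding. Here $v_2\neq0$ guarantees the differential is injective at $0$.
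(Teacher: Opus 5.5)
Your smoothness argument in (1) matches the paper's. The paper reduces fiberwise to $Y=\Spec k$ and uses $H^1(\P^1,\mathcal{O}(-2\cdot[0])\otimes f^*\mc{T}_{X/k})=0$ to get surjectivity of $H^0(f^*\mc{T}_{X/k})\to f^*\mc{T}_{X/k}\otimes\mathcal{O}/\mathfrak{m}_0^2$; your splitting with all $a_i\geqslant 1$ gives exactly this.

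The gap is in surjectivity. You assert that every direction is realized via an ``open orbit'', but then call this the main obstacle and defer it to a comb/gluing argument. The version you suggest, gluing a very free curve to a line inside a product, does not produce a curve in $X_y$ with a prescribed tangent vector. There is no ``line in the prescribed direction'' in $X_y$, and a curve in an auxiliary $\P^3$-factor contributes nothing in the $X_y$-directions. So as written, surjectivity is not proved. The paper does not prove it by hand either; it cites \cite[Theorem 2.42]{Debarre2010GAeL}.

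Your own ingredients already close the gap, without any comb:
\begin{enumerate}
\item Work over $\overline{k}$ and fix $x\in X(\overline{k})$ over $y$. Since $\mathcal{T}_{0,Y}^{d,X}$ is smooth, its image is open, so its trace $U_x$ on the fiber $(T_{X/Y})_x\cong\mathbb{A}^n$ is open.
\item Precomposing with $t\mapsto\lambda t$ shows $U_x$ is $\overline{k}^{\times}$-stable.
\item Take a very free curve $g$ in $X_y$ with $g(0)=x$, which exists by \cite[IV, 3.9]{kollar1996rational}. Precomposing with $t\mapsto t^2$ doubles the splitting type, so the curve stays very free and has zero tangent vector at $0$. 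Hence $0\in U_x$.
\item If the closed complement of $U_x$ contained some $w\neq 0$, it would contain the closure of $\overline{k}^{\times}w$, which is the line $\overline{k}w\ni 0$. This contradicts $0\in U_x$, so $U_x=(T_{X/Y})_x$ for every $x$, and surjectivity follows.
\end{enumerate}

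For (2), your general-position step is unnecessary. A deformation argument with the $1$-jet at $0$ fixed would also need positivity that fails even for lines, since $h^*\mc{T}_{\P^3}(-2)$ is not globally generated. As in the paper, take $g$ from (1) with $T_0g=v_1$ (possibly $v_1=0$), and take $h$ a line parametrized so that $T_0h=v_2$. Then $(g,h)$ is a closed immersion because $h$ is, and $(g,h)^*\mc{T}=g^*\mc{T}_{X_y}\oplus h^*\mc{T}_{\P^3}$ is ample.
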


\begin{proof}
We may assume that $k$ is algebraically closed.

(1) Since the morphisms $\RC(X/Y)_d \to Y$ and $T_{X/Y} \to Y$ are smooth, the smoothness of $\mathcal{T}_{0,Y}^{d,X}$ can be checked by verifying the surjectivity of its differential at each $k$-point of $\RC(X/Y)_d$. This may be done fiberwise over $Y$, so that we may reduce to the case $Y = \operatorname{Spec} k$. In other words, it suffices to show that if $X$ is separably rationally connected over $k$, then the morphism
\[
    \mathcal{T}_{0,\operatorname{Spec} k}^{0,X} \colon \RC(X)_{\mathrm{vf}} \longrightarrow T_{X/\operatorname{Spec} k},
    \quad [f] \longmapsto T_{0}(f)(\partial/\partial t),
\]
has surjective differential at every $k$-point $[f]\in \RC(X)_{\mathrm{vf}}(k)$. This is equivalent to the surjectivity of the map
\begin{equation}\label{eq:exact_sequence_O_2}
    H^0\!\left(\mathbb{P}^1_k,\, f^*\mathcal{T}_{X/k}\right) \longrightarrow f^*\mathcal{T}_{X/k} \otimes \mathcal{O}_{\mathbb{P}^1_k, 0} / \mathfrak{m}_{\mathbb{P}^1_k, 0}^2.
\end{equation}
Consider the exact sequence
\[
    0 \longrightarrow \mathcal{O}_{\mathbb{P}^1_k}(-2 \cdot [0]) \otimes f^*\mathcal{T}_{X/k} \longrightarrow f^*\mathcal{T}_{X/k} \longrightarrow \left(\mathcal{O}_{\mathbb{P}^1_k, 0} / \mathfrak{m}_{\mathbb{P}^1_k, 0}^2\right) \otimes f^*\mathcal{T}_{X/k} \longrightarrow 0.
\]
Since $f$ is very free, $H^1(\mathbb{P}^1_k,\, \mathcal{O}_{\mathbb{P}^1_k}(-2 \cdot [0]) \otimes f^*\mathcal{T}_{X/k}) = 0$. The associated long exact sequence in cohomology then shows that the map~\eqref{eq:exact_sequence_O_2} is surjective. Hence the differential of $\mathcal{T}_{0,\operatorname{Spec} k}^{0,X}$ is surjective at every point, and therefore $\mathcal{T}_{0,\operatorname{Spec} k}^{0,X}$ is smooth. Surjectivity follows from \cite[Theorem 2.42]{Debarre2010GAeL}.

(2) As $\RCE(X \times_k \mathbb{P}^3_k/Y)_d$ is an open subscheme of 
$\RC(X \times_k \mathbb{P}^3_k/Y)_d$, 
the smoothness of the map is implied by (1); it is then enough to show that its image contains $T_{X \times_k \P^3_k/Y}\setminus (T_{X/Y}\oplus \{0\})$. We can then suppose $k$ algebraically closed. Since $\mathcal{T}_{X \times_k \mathbb{P}^3_k/Y} = p_1^* \mathcal{T}_{X/Y} \oplus p_2^* \mathcal{T}_{\mathbb{P}^3_k/k}$, it is enough to prove that the image of
$\mathcal{T}_{0,\Spec k}^{0,\P^3_k} \colon \RCE(\mathbb{P}^3_k/ \Spec k)_{\mathrm{vf}} 
\to T_{\mathbb{P}^3_k/k}$ contains $T_{\mathbb{P}^3_k/k}\setminus\{0\}$, that is, for every $k$-point $p$ of $\mathbb{P}^3_k$ and every $u \in T_p \mathbb{P}^3_k \setminus \{0\}$, there exists a line in 
$\mathbb{P}^3_k$ passing through $p$ with tangent direction $u$, which is clear.
\end{proof}

\subsection{Vertical teeth through prescribed points}

The following proposition supplies the vertical rational curves with prescribed first-order behaviour that will be used later, together with comb smoothing, to arrange incidence conditions while preserving almost very freeness.

\begin{prop}\label{avf_passing_Z}
Let $k$ be a field, let $d \geqslant 0$ be an integer, let $Y$ be a smooth 
$k$-variety of pure dimension $d$, let $p \colon X \to Y$ be a smooth projective 
morphism such that for every $y \in Y$ the $k(y)$-variety 
$X_y \coloneqq X \times_Y y$ is separably rationally connected, and let 
$\iota \colon Z \hookrightarrow X$ be a positive-dimensional irreducible closed subscheme of codimension $m\geq 0$ such that $p|_Z \colon Z \to Y$ is smooth.

(1) The set
\begin{align*}
\left\{
z \in Z \;\middle|\;
\begin{aligned}
& k(z)/k \text{ is finite and separable, and there exist vectors }
  v_1, \dots, v_m \in (\mathcal{T}_{X/Y})_z \\
& \text{whose images in } (\mathcal{N}_{Z/X})_z \text{ form a basis, and for every }
  i=1,\dots,m \text{ there exists} \\
& \text{a very free curve } f_i \colon \mathbb{P}^1_{k(z)} \to X_{p(z)} \text{ such that } 
  f_i(0) = z \text{ and } \mathrm{d}_0 f_i\!\left(\tfrac{\partial}{\partial t}\right) = v_i
\end{aligned}
\right\}
\end{align*}
is dense in $Z$.

(2) We denote by $p' \colon X \times_k \mathbb{P}^3_k \to Y$ the composite of the first projection with $p$.  Let $\iota' \colon Z' \hookrightarrow X \times_k \mathbb{P}^3_k$ be a closed immersion, where $Z'$ is a positive-dimensional irreducible closed subscheme of codimension $m'\geq 0$ such that $p'|_{Z'}\colon Z' \to Y$ is smooth.
Then the set
\begin{align*}
\left\{ z' \in Z' \;\middle|\;
\begin{aligned}
& k(z')/k \text{ is finite and separable, and there exist vectors }
v_1, \dots, v_{m'} \in (\mathcal{T}_{X \times_k \mathbb{P}^3_k/Y})_{z'} \\
& \text{whose images in } (\mathcal{N}_{Z'/X \times_k \mathbb{P}^3_k})_{z'}
 \text{ form a basis, and for every } i=1,\dots,m' \text{ there exists an}\\
& \text{embedded very free curve }
f_i \colon \mathbb{P}^1_{k(z')} \to (X \times_k \mathbb{P}^3_k)_{p'(z')}
\text{ such that } f_i(0) = z' \text{ and }
\mathrm{d}_0 f_i\!\left(\tfrac{\partial}{\partial t}\right) = v_i
\end{aligned}
\right\}
\end{align*}
is dense in $Z'$.
\end{prop}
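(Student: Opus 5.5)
The plan is to realize the set in question as (the image of) the set of separable closed points of a smooth scheme dominating $Z$. We begin with (1). Write $\mathcal{N}\coloneqq \mathcal{N}_{Z/X}$, a rank $m$ vector bundle on $Z$; since $Z\to Y$ and $X\to Y$ are smooth, $Z$ is smooth over $k$ and $\iota^*\mathcal{T}_{X/Y}\to \mathcal{N}$ is surjective. Consider the fiber product
\[
P\coloneqq \bigl(\RC(X/Y)_d\bigr)^{m}\times_{(T_{X/Y})^m}\bigl(T_{X/Y}|_Z\bigr)^{\times_Z m},
\]
where the map $\RC(X/Y)_d^m\to T_{X/Y}^m$ is $(\mathcal{T}^{d,X}_{0,Y})^m$. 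By \Cref{lem:ev-surj}(1), each factor $\mathcal{T}^{d,X}_{0,Y}$ is smooth and surjective, so $P\to (T_{X/Y}|_Z)^{\times_Z m}$ is smooth and surjective. Let $V\subset (T_{X/Y}|_Z)^{\times_Z m}$ be the open subscheme of tuples $(v_1,\dots,v_m)$ over a point $z$ whose images in $\mathcal{N}_z$ form a basis; it is nonempty over every point of $Z$ because $\iota^*\mathcal{T}_{X/Y}\to\mathcal{N}$ is surjective, and $V\to Z$ is smooth and surjective (an open in a vector bundle). Hence $P_V\coloneqq P\times V\to Z$ is smooth and surjective.

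Now a point of $P_V$ over $z$ with residue field $k(z)$ gives curves $[f_i]\in\RC(X/Y)_d(k(z))$ with $f_i(0)=z$, tangent $v_i$, and by \Cref{ev_smooth_almost_free} (via \Cref{lem:ev-surj}'s setup: a free curve in $\RC(X/Y)_d$ with $\mathfrak{z}\le d$, together with \Cref{vf_avf}) the induced curve in the fiber $X_{p(z)}$ is very free. So it suffices to show that the set of points $z\in Z$ with $k(z)/k$ finite separable over which $P_V$ has a $k(z)$-point is dense. For this, note that $P_V$ is smooth over $k$ (smooth over the smooth $Z$), nonempty, so its closed points with finite separable residue field are dense in $P_V$ (standard: smooth varieties over $k$ have dense sets of points with separable residue fields). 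Their images in $Z$ have residue fields which are subextensions, hence finite separable, and they are dense in $Z$ since $P_V\to Z$ is open and surjective. Over such an image $z$, the point $w\in P_V$ has $k(w)\supseteq k(z)$ possibly strictly bigger; here is the main obstacle. To fix it, I would instead work with the fiber $P_{V,z}$ over points $z$: restrict first to the dense set of $z\in Z$ with $k(z)/k$ finite separable (dense as $Z$ smooth), then apply the following: $P_{V,z}$ is a smooth nonempty $k(z)$-scheme, but it need not have a $k(z)$-point. So the honest approach is to use that $V_z$ is an open of affine space (has $k(z)$-points when $k(z)$ is infinite) and that the fibers of $\mathcal{T}^{d,X}_{0,Y}$ over a point $(z,v)$ are open subschemes of spaces of very free curves through $z$ with given tangent; existence of a $k(z)$-rational such curve I would try to obtain from density of $k(z)$-points on the fiber, e.g. via the unirationality of the relevant open of $\RC$ of a separably rationally connected variety with a rational point, or, failing that, by allowing $k(z)$ to be replaced: density of points $w\in P_V$ with $k(w)$ finite separable, and then replacing $z$ by the image point only when $k(w)=k(z)$ — I expect this residue-field matching to be the real difficulty, and the first thing I would try is a Bertini/section argument: choose a smooth curve in $P_V$ quasi-finite étale over a curve in $Z$ so that étale-local sections exist over a dense set.

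Part (2) is identical, replacing $X$ by $X\times_k\P^3_k$, $\RC$ by $\RCE$, and invoking \Cref{lem:ev-surj}(2) in place of (1): the only change is that the tangent map is smooth with image containing the complement of $p_1^*T_{X/Y}\oplus\{0\}$, so I would additionally shrink $V$ to the open subset where no $v_i$ lies in $p_1^*T_{X/Y}\oplus 0$; this is still fiberwise dense open in the vector bundle (the bad locus is a proper subbundle, and one can perturb by the $\P^3$-component while keeping the normal images a basis), and the rest of the argument goes through verbatim, the curves being closed embeddings by definition of $\RCE$.
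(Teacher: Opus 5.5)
Your setup is sound, and slightly cleaner than the paper's. The paper fixes a generic basis of $(\mathcal{T}_{X/Y}|_Z)_{k(Z)}$, takes closures of $\mathbb{G}_m$-orbits, and obtains a smooth surjection $H(Z,v_1)^\times\times_U\cdots\times_U H(Z,v_n)^\times\to U$ over a dense open $U\subset Z$. Your $P_V\to Z$, built from the open $V$ of tuples mapping to a basis of $\mathcal{N}_z$ and \Cref{lem:ev-surj}(1), plays the same role. Your use of \Cref{vf_avf} to see that a $k(z)$-point of $\RC(X/Y)_d$ gives a very free curve in $X_{p(z)}$ is also correct.

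The gap is the one step that is not formal, which you flag but do not close. You need a dense set of closed points $z\in Z$ with $k(z)/k$ separable \emph{and} $P_{V,z}(k(z))\neq\varnothing$. None of your suggested fixes delivers this:
\begin{itemize}
\item Density of separable closed points $w\in P_V$ only gives $k(w)\supseteq k(z)$.
\item Rational points of $V_z$ say nothing about rational points of the fiber of $\mathcal{T}^{d,X}_{0,Y}$ over $(z,v)$. That fiber is smooth but may have no $k(z)$-point, and you give no argument for the unirationality you would need.
\item A quasi-section producing an étale map of curves $\Gamma'\to\Gamma$ only yields étale-local sections, which is the situation you started from. You still need closed points $w\in\Gamma'$ with $k(w)=k(\pi(w))$.
\end{itemize}

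The paper supplies this step via \cite[Proposition~6.1]{Gille2021}: for the smooth surjective morphism onto $U$, the closed points $z\in U$ with $k(z)/k$ separable that admit a point $\eta$ above them with $k(\eta)=k(z)$ are dense. This is exactly where the hypothesis that $Z$ is positive-dimensional is used, and it is a warning sign that your argument never uses it.

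One way to see the needed density, for $k$ infinite: reduce, using a quasi-section and étale coordinates, to an étale map $h\colon\Gamma'\to\mathbb{A}^1_k$ from a smooth curve. Pick $g$ with $k(\Gamma')=k(h,g)$ and $dg\neq 0$. Points of $g^{-1}(c)$ with $c\in k$ then satisfy $k(w)=k(h(w))$, which forces the residue field to match that of the image. For $k$ finite, a Lang--Weil count does the job.

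Once this lemma is added, your proof of (1) goes through, and so does (2) with $V$ shrunk to avoid $p_1^*T_{X/Y}\oplus\{0\}$.
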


\begin{proof} 
(1) We follow the idea of \cite[p. 8]{Kollar2004Specialisation}.
First, as $p|_Z \colon Z \to Y$ is smooth, by \cite[Lemma 067T]{stacks-project}, the closed immersion $\iota\colon Z\hookrightarrow X$ is a regular immersion. We then have the following exact sequence
\[
0 \longrightarrow \mathcal{T}_{Z/Y} \longrightarrow
\iota^*\mathcal{T}_{X/Y} \longrightarrow \mathcal{N}_{Z/X}
\longrightarrow 0.
\]

Let $v_1, \dots, v_n$ be a basis of the $k(Z)$-vector space $(\mathcal{T}_{X/Y}|_Z)_{k(Z)}$ such that the images of the first $m$ vectors $\overline{v}_1, \dots, \overline{v}_m$ in $(\mathcal{N}_{Z/X})_{k(Z)}$ form a basis. For each $i=1,\dots,n$, let $N(Z, v_i)$ be the closure in $\mathcal{T}_{X/Y} \times_X Z$, and thus in $\mathcal{T}_{X/Y}$, of the orbit $\mathbb{G}_{m,k(Z)}\cdot v_i\subset \mathcal{T}_{X/Y}$, and define $H(Z,v_i)$ by the cartesian square
\[
\begin{tikzcd}
H(Z,v_i) \arrow[r] \arrow[d]
& \RC(X/Y)_d \arrow[d, "\mathcal{T}_{0,Y}^{d,X}"] \\
N(Z,v_i) \arrow[r, hook]
& \mathcal{T}_{X/Y},
\end{tikzcd}
\]
thus by \Cref{lem:ev-surj}(1), the map $H(Z,v_i) \to N(Z,v_i)$ is smooth and surjective.
Over a dense open subset $U$ of $Z$ over which $v_1,\dots,v_n$ extend to a basis of $\mathcal{T}_{X/Y}|_U$, for every $i=1,\dots,n$, the restriction $N(Z,v_i)|_U$ is the line subbundle generated by $v_i$. Set
\[
N(Z,v_i)^\times\coloneqq N(Z,v_i)|_U\setminus 0_U
\qquad\text{and}\qquad
H(Z,v_i)^\times\coloneqq H(Z,v_i)\times_{N(Z,v_i)}N(Z,v_i)^\times.
\]
Then $N(Z,v_i)^\times\simeq \mathbb G_{m,U}$, and the morphism
$H(Z,v_i)^\times\to N(Z,v_i)^\times$ is smooth and surjective. Hence the natural morphism
$H(Z,v_1)^\times\times_U\cdots\times_U H(Z,v_n)^\times\to U$
is identified with the composite
\[
H(Z,v_1)^\times\times_U\cdots\times_U H(Z,v_n)^\times
\longrightarrow
N(Z,v_1)^\times\times_U\cdots\times_U N(Z,v_n)^\times\xlongrightarrow{\sim} \mathbb{G}_{m,U}^n\longrightarrow U,
\]
and hence in particular is smooth and surjective over $U$.
Thus the set of closed points $z \in U$ such that $k(z)$ is separable over $k$
and for which there exists a closed point $\eta$ in $H(Z,v_1)^\times\times_U\cdots\times_U H(Z,v_n)^\times$
mapping to $z$ with
$k(\eta) = k(z)$ is dense in $U$, and hence in $Z$; see \cite[Proposition~6.1]{Gille2021}.
Fix such $z \in U$ and $\eta$. The point $\eta$ corresponds to a family of curves
$f_i \colon \mathbb{P}^1_{k(z)} \to X$ such that $p \circ f_i$ is constant,
$f_i(0) = z$, $(f_i)_{p(z)} \colon \P^1_{k(z)} \to X_{k(p(z))}$ is very free, and, for every $i$, $T_0(f_i)(1)$ is a nonzero scalar multiple of $v_i(z)$. In particular,
$(T_0(f_i)(1))_{1 \leqslant i \leqslant n}$ spans
$(\mathcal{T}_{X/Y})_z$ and the image $(\overline{T_0(f_i)(1)})_{1 \leqslant i \leqslant m}$ in $(\mathcal{N}_{Z/X})_z$ is a basis. This proves the first assertion.

(2) The proof is the same as the proof of (1), applying \Cref{lem:ev-surj}(2) instead of  \Cref{lem:ev-surj}(1), and fixing a basis $v_1,\dots,v_n$ of $(\mathcal{T}_{X \times_k \P^3_k/Y}|_{Z'})_{k(Z')}=((p_1|_{Z'})^*\mathcal{T}_{X/Y})_{k(Z')}\oplus ((p_2|_{Z'})^*\mathcal{T}_{\P^3_k/k})_{k(Z')}$ such that the images of $v_1,\dots,v_{m'}$ in $(\mathcal{N}_{Z'/X\times_k\P^3_k})_{k(Z')}$ form a basis and such that no $v_i$ lies in the proper subspace $((p_1|_{Z'})^*\mathcal{T}_{X/Y})_{k(Z')}\oplus \{0\}$.
\end{proof}

\section{Comb smoothing of sections}\label{sec:4}

\subsection{Combs and smoothings}

Our proofs of Theorems \ref{thm:main}, \ref{thm:large-field} and \ref{thm:main-average} rely on the comb smoothing technique. We begin by recalling the definitions and results that are relevant to us, following Koll\'ar \cite{kollar1996rational}.

\begin{defin} \label{def:smoothing}
Let $k$ be a field, let $C$ be a curve over $k$, let $X$ be a $k$-variety,
let $f \colon C \to X$ be a morphism, let $\iota \colon Z \hookrightarrow C$ be a finite subscheme of $C$, and let $n \geqslant 0$ be an integer.
\begin{enumerate}
    \item A \emph{smoothing} of $f$ is a commutative diagram of $k$-varieties
    \begin{equation}\label{eq:smoothing}
    \begin{tikzcd}
        C \arrow[d] \arrow[r, hook] \arrow[rr, bend left=30, "f"] 
            & Y \arrow[r, "F"] \arrow[d, "q"] 
            & X \\
        \operatorname{Spec}(k) \arrow[r, hook, "t_0"] 
            & T
    \end{tikzcd}
    \end{equation}
    where $F$ is a morphism, $T$ is a smooth connected $k$-curve, the square is cartesian,
    and the morphism $q$ is flat and projective and restricts to a smooth
    morphism $Y \setminus C \to T \setminus \{t_0\}$.

    \item We say that the smoothing~\eqref{eq:smoothing} \emph{fixes} $f(Z)$
    if there exists a morphism $s \colon T \times_k Z \longrightarrow Y$    such that the following conditions hold:
    \[
        q \circ s = \operatorname{pr}_T,
        \qquad
        s|_{\{t_0\}\times_k Z} = \iota_Y,
        \qquad
        F \circ s = f|_Z \circ \operatorname{pr}_Z.
    \]
    Here $\operatorname{pr}_T \colon T\times_k Z\to T$ and
    $\operatorname{pr}_Z \colon T\times_k Z\to Z$ are the two projections, and
    $\iota_Y\colon Z\hookrightarrow C\hookrightarrow Y$ denotes the natural
    inclusion, after identifying $\{t_0\}\times_k Z$ with $Z$.

    \item The morphism $f \colon C \to X$ is said to be
    \emph{smoothable fixing} $f(Z)$ if there exists a smoothing~\eqref{eq:smoothing}
    which fixes $f(Z)$.
\end{enumerate}
\end{defin}

We recall that, for a field $k$ and a $k$-curve $C$, a closed point $p\in C$ is said to be an \emph{ordinary node} if for every $\cl{k}$-point $q\in C_{\cl{k}}(\cl{k})$ lying over $p$, we have $\hat{\mc{O}}_{C_{\cl{k}},q}\cong \cl{k}[\![x,y]\!]/(xy)$.

\begin{defin}[{\cite[II, Definition 7.7]{kollar1996rational}}]\label{comb}
Let $k$ be a field, and let $n\geqslant 0$ be an integer. A \emph{$k$-comb with $n$ teeth} is a geometrically connected, geometrically reduced projective $k$-curve $\mathcal{C}$, with irreducible components 
$D$ (the \emph{handle} of the comb) and $C_1, \dots, C_n$ (the \emph{teeth} of the comb) satisfying the following properties:
\begin{enumerate}
    \item[\textup{(i)}] $D$ is smooth and each $C_i$ is isomorphic to $\mathbb{P}^1_{k'_i}$ for some finite separable extension $k'_i/k$, 
    \item[\textup{(ii)}] the only singularities of $\mathcal{C}$ are ordinary nodes,
    \item[\textup{(iii)}] the teeth $C_i$ are pairwise disjoint, and each $C_i$ intersects $D$ in a single closed point $x_i$.
\end{enumerate}
A closed subscheme $\mathcal{C}'\subset \mathcal{C}$ is a \emph{subcomb} if it is a union of irreducible components of $\mathcal{C}$ and $D\subset \mathcal{C}'$.
\end{defin}

\begin{rmk}\label{Serre_ampleness}
Let $k$ be a field, and let $D$ be a smooth projective connected curve over $k$.  
An application of Serre's ampleness criterion (see \cite[Theorem 23.6]{GortzWedhornalgebraic}) shows that for every vector bundle $E$ on $D$, there exists an integer $n_E$ such that for every line bundle $\mathcal{L}$ on $D$ of degree $\deg(\mathcal{L}) \geqslant n_E$, one has
$H^1(D, E \otimes \mathcal{L}) = 0$. 
\end{rmk}

For the proofs of Theorems \ref{thm:main}, \ref{thm:large-field} and \ref{thm:main-average}, we will need to consider smoothings of combs whose handles $D$ are sections of a given morphism $X\to D$. We first record that such a morphism is automatically smooth along the handle.

\begin{lemma}\label{smooth_along_section}
Let $k$ be a field, let $D$ and $X$ be smooth $k$-schemes, let $p\colon X\to D$ be a morphism, and let $s\colon D\to X$ be a section of $p$. Then $p$ is smooth in an open neighborhood of $s(D)$.
\end{lemma}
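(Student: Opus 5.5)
The idea is to use the Jacobian criterion in its cotangent-sequence form. Smoothness of $p$ at a point $x$ is equivalent to the map $p^*\Omega_{D/k}\to\Omega_{X/k}$ being injective at $x$ with locally free cokernel near $x$. Because $p\circ s=\mathrm{id}_D$, this map has a retraction along $s(D)$, which should give exactly what is needed.

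\textbf{Reductions.} First I will reduce to the case where $X$ and $D$ have pure dimension: the connected components of smooth $k$-schemes are irreducible of pure dimension, and the statement is local on $X$. Smoothness of $p$ is also local and open, since the smooth locus of a morphism locally of finite presentation is open. So it suffices to show that $p$ is smooth at every point $x=s(y)$ with $y\in D$.

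\textbf{The splitting.} Pull back the natural map $\phi\colon p^*\Omega_{D/k}\to\Omega_{X/k}$ along $s$. This gives $s^*\phi\colon \Omega_{D/k}\to s^*\Omega_{X/k}$. On the other hand, the differential of $s$ gives $ds\colon s^*\Omega_{X/k}\to\Omega_{D/k}$. Since $p\circ s=\mathrm{id}$, functoriality of differentials gives $ds\circ s^*\phi=\mathrm{id}_{\Omega_{D/k}}$. Hence the fiber map $\phi(x)\colon \Omega_{D/k}(y)\otimes\kappa(x)\to\Omega_{X/k}(x)$ is injective; here $\kappa(x)=\kappa(y)$.

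\textbf{Concluding smoothness.} Both $\Omega_{X/k}$ and $p^*\Omega_{D/k}$ are locally free, because $X$ and $D$ are smooth over $k$. A map between locally free modules of finite rank that is injective on the fiber at $x$ is, on a neighborhood of $x$, a split injection of vector bundles; this follows from Nakayama, by extending a basis. Its cokernel $\Omega_{X/D}$ is therefore locally free near $x$ of rank $\dim_x X-\dim_y D$, and the first fundamental sequence is exact and locally split there. I then apply the criterion that a morphism of smooth $k$-schemes is smooth at $x$ exactly when $p^*\Omega_{D/k}\to\Omega_{X/k}$ is injective with locally free cokernel near $x$ (Stacks, Lemma 02K4 and its converse, or EGA IV 17.11.1). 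Concretely, a morphism between smooth $k$-schemes is smooth at $x$ if and only if the induced map on tangent spaces is surjective at $x$, and we have shown this after dualizing. This gives smoothness at $x$.

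\textbf{The delicate step.} The main obstacle is applying the tangent-space criterion correctly when $\kappa(x)/k$ is inseparable. In that case the tangent space at $x$ should be taken as the dual of $\Omega_{X/k}\otimes\kappa(x)$, and the relevant criterion is EGA IV 17.11.1: for $X,D$ smooth over $k$, $p$ is smooth at $x$ if and only if $\Omega_{D/k}\otimes\kappa(x)\to\Omega_{X/k}\otimes\kappa(x)$ is injective. That injectivity is exactly the fiberwise statement obtained from the retraction, so the argument goes through once the criterion is used in this form. Openness of the smooth locus then yields the required neighborhood of $s(D)$.
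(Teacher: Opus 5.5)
Your proposal is correct and follows the same route as the paper. The identity $p\circ s=\mathrm{id}_D$ forces the differential of $p$ to be split surjective along $s(D)$; you phrase this dually, as injectivity of $\Omega_{D/k}(y)\to\Omega_{X/k}(x)$ via the retraction $ds\circ s^*\phi=\mathrm{id}$. The differential criterion for morphisms between smooth $k$-schemes (EGA IV 17.11.1) and openness of the smooth locus then finish the argument, exactly as in the paper's proof.

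Your cotangent formulation has one small advantage: it makes explicit that the tangent space must be read as $\operatorname{Hom}(\Omega_{X/k}\otimes\kappa(x),\kappa(x))$, which the paper's one-line argument uses tacitly when $\kappa(x)/k$ is inseparable. The reduction to pure dimension and the detour through local splitting of vector bundles are harmless but not needed.
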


\begin{proof}
The equality $p\circ s=\operatorname{id}_D$ implies that the differential of $p$ is surjective at every point of $s(D)$. Since $X$ and $D$ are smooth over $k$, the differential criterion for smoothness shows that $p$ is smooth at every point of $s(D)$. The conclusion follows from the openness of the smooth locus.
\end{proof}

The next lemma shows that a general fiber of the smoothing is canonically isomorphic to $D$.

\begin{lemma} \label{continuity_smoothing}
Let $k$ be a field, let $D$ be a smooth projective geometrically connected curve over $k$, let $X$ be a smooth projective $k$-variety, and let $p\colon X \to D$ be a morphism. Let $\mathcal{C}$ be a comb with handle $D$ and teeth $(C_i)_{1 \leqslant i \leqslant m'}$, and let $f \colon \mathcal{C} \to X$ be a morphism whose restriction to $D$ is a section of $p$ and such that $p\circ(f|_{C_i})$ is constant for every $1\leqslant i\leqslant m'$.  
Suppose given a smoothing of $f$ as in (\ref{eq:smoothing}). Then there exists a dense open subscheme $U \subset T$ such that for all points $t \in U$, the composite
\[Y_t\xlongrightarrow{F|_{Y_t}} X_{k(t)}\xlongrightarrow{p_{k(t)}}D_{k(t)}\]
is an isomorphism. 
\end{lemma}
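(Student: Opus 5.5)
The idea is to study the composite morphism $\phi\coloneqq p_T\circ(F\times q)\colon Y\to D\times_k T$. Here $p_T\coloneqq p\times\operatorname{id}_T$, so $\phi$ sends $y$ to $(p(F(y)),q(y))$. This is a morphism of $T$-schemes between projective flat $T$-schemes. We will show it is an isomorphism over a dense open subscheme of $T$, and the plan has three steps.

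First, we compute the degree of $\phi$ on the special fiber. Over $t_0$, the map $\phi_{t_0}\colon \mathcal{C}\to D$ restricts to the identity on the handle $D$, because $f|_D$ is a section of $p$. On each tooth $C_i$ it is constant, since $p\circ f|_{C_i}$ is constant. So $\phi_{t_0}$ is surjective, has finite degree one over $D$, and has positive-dimensional fibers only over the finitely many attachment points $x_i$. Fix an ample line bundle $L$ on $D$ (note $D$ is projective). Then $\deg(\phi_{t_0}^*L|_D)=\deg L$ and $\deg(\phi_{t_0}^*L|_{C_i})=0$.

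Second, we propagate this to general fibers by flatness. Since $q$ is flat and projective, the Euler characteristic $\chi(Y_t,\phi_t^*L^{\otimes n})$ is locally constant in $t$, and $T$ is connected. Hence for every $t$ the total degree of $\phi_t^*L$ on $Y_t$ equals $\deg L$.

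Third, we analyse the generic fiber. We shrink $T$ to a dense open $U\ni$ (the generic point) over which $Y_t$ is smooth; this is possible because $Y\setminus\mathcal{C}\to T\setminus\{t_0\}$ is smooth. Each $Y_t$ is geometrically connected. This holds because $q_*\mathcal{O}_Y$ is locally free, hence of constant rank, and the special fiber $\mathcal{C}$ is geometrically connected and reduced, so $h^0(\mathcal{O}_{\mathcal{C}})=1$; upper semicontinuity then gives $h^0(\mathcal{O}_{Y_t})\le 1$ for $t$ near $t_0$. Therefore $Y_t$ is a smooth geometrically integral projective curve. The map $\phi_t\colon Y_t\to D_{k(t)}$ is either constant or finite surjective. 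It cannot be constant, because $\phi_t^*L$ has degree $\deg L>0$. So it is finite of degree $\deg(\phi_t^*L)/\deg L=1$, i.e. birational between smooth projective curves, hence an isomorphism.

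To justify nonconstancy and openness more cleanly, one can alternatively argue with the locus where $\phi$ is finite. Since $\phi$ is proper, the set of points of $D\times T$ over which $\phi$ is quasi-finite is open. Its complement is closed and meets the special fiber only in the points $x_i$, so its image in $T$ is a proper closed subset, hence finite. Away from that finite set $\phi$ is finite. Because $\phi_*\mathcal{O}_Y$ then has generic rank $1$ and the target is normal, the Stein factorization plus Zariski's main theorem give that $\phi$ is an isomorphism generically over $T$.

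The main obstacle is to ensure that the identification holds on a dense open of $T$ and not merely at the generic point. This follows because the isomorphism locus of a finite morphism of flat proper $T$-schemes is open in the target and its image is constructible, so after restricting to $U$ it contains an open neighbourhood of the generic point of $T$. Since $T$ is a curve, that neighbourhood is dense.
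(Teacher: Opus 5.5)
Your proof is correct, but its decisive step differs from the paper's.

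\textbf{Common first step.} Both arguments begin the same way. Upper semicontinuity of $h^0(\mathcal{O}_{Y_t})$, together with $h^0(\mathcal{O}_{\mathcal{C}})=1$, shows that $Y_t$ is a smooth geometrically integral projective curve for every $t\neq t_0$ in some open neighbourhood $V$ of $t_0$.

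\textbf{The paper's route.} The paper then argues locally along the handle. It shows that $g=(p\circ F,q)\colon Y\to D\times_k T$ has finite fibres on a neighbourhood of $(D\setminus\{x_1,\dots,x_{m'}\})\times\{t_0\}$, where source and target are smooth over $T$. Miracle flatness then gives flatness there. Since $g$ is projective, flat and an isomorphism over $t_0$, it is an isomorphism over an open set $U'$ containing $(D\setminus\{x_i\})\times\{t_0\}$. Hence $g_t$ is birational for $t$ near $t_0$, and the proof ends with the fact that a birational morphism of smooth projective curves is an isomorphism.

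\textbf{Your route.} You replace this local analysis by a global numerical invariant. Because $\phi^*L$ is flat over $T$, the quantity $\chi(Y_t,\phi_t^*L)-\chi(\mathcal{O}_{Y_t})$ is constant in $t$. On the comb it equals $\deg L$, since the handle maps identically to $D$, the teeth are contracted, and degree is additive over the components of the reduced curve $\mathcal{C}$. This rules out constant $\phi_t$ and forces $\deg\phi_t=1$ in one stroke, pointwise for every $t\in V\setminus\{t_0\}$. You need neither miracle flatness nor openness of the quasi-finite locus. The paper's route, in exchange, avoids any degree bookkeeping on the reducible special fibre and uses only the structure of $g$ near the handle.

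\textbf{Two side remarks.}
\begin{enumerate}
\item Your final paragraph about the ``main obstacle'' is unnecessary. Step three already yields the dense open set $V\setminus\{t_0\}$ directly.
\item In your alternative paragraph, one inference does not follow as stated. You say the bad locus (points of $D\times T$ over which the fibre of $\phi$ is positive-dimensional) meets the special fibre only in the points $(x_i,t_0)$, so its image in $T$ is a proper closed subset. Knowing only the special fibre does not exclude an irreducible component of that closed set that is a curve finite over $T$ through some $(x_i,t_0)$. Such a component is ruled out only once you know that $Y_t$ is irreducible and $\phi_t$ is nonconstant, which is exactly what step three provides. Since your main argument does not rely on this paragraph, correctness is unaffected.
\end{enumerate}
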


\begin{proof}
We have $H^0(\mathcal{C}, \mathcal{O}_{\mathcal{C}}) = k$. It follows from the base change theorem for coherent cohomology \cite[III, Theorem 12.11]{Hartshorne1977algebraic} that there exists an open neighborhood of $t_0$ over which $q_* \mathcal{O}_Y$ is locally free of rank $1$. Replacing $T$ by this neighborhood, we may assume that $Y_t$ is
irreducible for all $t\in T\setminus\{t_0\}$. Let $x_i$ be the unique point in $D\cap C_i$. Since the morphism $q$ is projective and the $k$-curve $\mathcal{C}$ is projective, the morphism $g \coloneqq (p\circ F,q) \colon
Y \to D \times_k T$ is projective. Let $D^\circ \coloneqq (D \times_k T) \setminus \{(x_1,t_0),\dots,(x_{m'},t_0)\}$. Since $g$ maps every tooth $C_i$ to $(x_i,t_0)$, the restriction $g|_{g^{-1}(D^\circ)} \colon g^{-1}(D^\circ) \to D^\circ$ is a morphism between smooth schemes over $T$ which induces an isomorphism between the fibers above $t_0$. By \cite[Lemma 04DI]{stacks-project}, there exists an open neighborhood $U \subset D^\circ$ of $(D\setminus\{x_1,\dots,x_{m'}\}) \times \{t_0\}$ such that $g|_{g^{-1}(U)} \colon g^{-1}(U)
\to U$ has $0$-dimensional fibers. Thus, by the miracle flatness theorem
\cite[Lemma 00R4]{stacks-project}, this morphism is flat. Since $g|_{g^{-1}(U)}$ is projective and flat and induces an isomorphism over $t_0 \in T(k)$, it follows that there exists an open subscheme $U' \subset U$ containing $(D\setminus\{x_1,\dots,x_{m'}\}) \times \{t_0\}$ such that $g|_{g^{-1}(U')}$ is an isomorphism. In particular, there exists an open neighborhood $V \subset T$ of $t_0$ such that
for all $t \in V\setminus \{t_0\}$, the composite $p_{k(t)}\circ F|_{Y_t} \colon Y_t\to D_{k(t)}$ is a birational morphism between smooth projective irreducible curves, and hence an isomorphism. Thus $U\coloneqq V\setminus\{t_0\}$ is the required dense open subscheme of the smooth connected curve $T$.
\end{proof}

\subsection{The deformation theory of embedded combs}

Let $k$ be a field, let $A$ be a $k$-algebra, and let $I\subset A$ be an ideal. Let $\operatorname{Def}_A(I)$ be the set of first-order deformations of $I$, that is, the set of ideals $J\subset A[\epsilon]/(\epsilon^2)$ such that $(A[\epsilon]/(\epsilon^2))/J$ is flat over $k[\epsilon]/(\epsilon^2)$ and $
\frac{J+(\epsilon)}{(\epsilon)}=I$. 

Let $J\in \operatorname{Def}_A(I)$. For every $f\in I$, choose $a\in A$ such that $f-\epsilon a\in J$ and set $\varphi_J(f+I^2)= a+I$. One checks that $\varphi_J\colon I/I^2\to A/I$ is a well-defined homomorphism of $A/I$-modules. We obtain a function
\begin{equation}\label{eq:def-to-hom}
\operatorname{Def}_A(I)
\longrightarrow
\operatorname{Hom}_{A/I}(I/I^2,A/I),\qquad J\mapsto \varphi_J.
\end{equation}
Conversely, given $\varphi\in \operatorname{Hom}_{A/I}(I/I^2,A/I)$, 
define an ideal
\[
J_\varphi=\{f-\epsilon a\in A[\epsilon]/(\epsilon^2)\;|\; f\in I,\ a\in A,\ a+I=\varphi(f+I^2)\}\subset A[\epsilon]/(\epsilon^2).
\]
One checks that $J_\varphi\in \operatorname{Def}_A(I)$. We obtain a function
\begin{equation}\label{eq:hom-to-def}
\operatorname{Hom}_{A/I}(I/I^2,A/I)
\longrightarrow
\operatorname{Def}_A(I),\qquad \varphi\mapsto J_\varphi.
\end{equation}
It is not difficult to show that the two functions \eqref{eq:def-to-hom} and \eqref{eq:hom-to-def} are inverse to each other; see \cite[Proof of Proposition 2.3]{hartshorne2010deformation}. These constructions are compatible with localization of $A$.

Now let $X$ be a $k$-scheme, let $Y\subset X$ be a closed subscheme, with ideal sheaf $\mathcal I\subset \mc{O}_X$. Recall that a first-order embedded deformation of $Y$ in $X$ is a closed subscheme $\mathcal Y\subset X\times_k\Spec(k[\epsilon]/(\epsilon^2))$
which is flat over $k[\epsilon]/(\epsilon^2)$ and whose special fiber is $Y$.
By covering $X$ by affine open subschemes and applying the previous construction, we obtain a bijective correspondence between the set of first-order embedded deformations of $Y$ in $X$ and $\operatorname{Hom}_{\mc{O}_Y}(\mathcal I/\mathcal I^2,\mc{O}_Y)=H^0(Y,N_{Y/X})$; see \cite[Theorem 2.4]{hartshorne2010deformation}.

The following lemma is a well-known result; see for example \cite[Lemma 2.5]{graber2003families}. We include a proof for lack of a suitable reference for the proof.

\begin{lemma}\label{prop:transverse-curve}
Let $k$ be a field, let $X$ be a smooth projective $k$-variety, and let $\mc{C}\subset X$ be a comb with handle $D$ and teeth $C_1,\dots,C_m$. For all $i=1,\dots,m$, let $q_i\in D$ be the unique closed point such that $D\cap C_i=\{q_i\}$, suppose that the finite extension $k(q_i)/k$ is separable, and let
\[
\rho_i\colon H^0(\mc{C},N_{\mc{C}/X})
\longrightarrow
H^0(D,(N_{\mc{C}/X})|_D)
\longrightarrow
T_{C_i,q_i}
\]
be the map induced by the short exact sequence
\begin{equation}\label{normal_exact_sequence}
    0\longrightarrow N_{D/X}
\longrightarrow (N_{\mc{C}/X})|_{D}
\longrightarrow \bigoplus_{i=1}^m T_{C_i,q_i}
\longrightarrow 0.
\end{equation}
Let $v\in T_{[\mc{C}]}\operatorname{Hilb}_X = H^0(\mc{C},N_{\mc{C}/X})$ 
be such that $\rho_i(v)\neq 0$ for every $i=1,\dots,m$. Let $Z\subset \operatorname{Hilb}_X$ be a connected curve which passes through $[\mc{C}]$, is smooth at $[\mc{C}]$, and such that $v$ belongs to $T_{[\mc{C}]}Z\subset T_{[\mc{C}]}\operatorname{Hilb}_X$. Then there exists a dense open subscheme $U\subset Z$ such that every $u\in U(\overline k)$ corresponds to a smooth projective curve in $X$.
\end{lemma}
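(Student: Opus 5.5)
The plan is to argue by contradiction, or via a dimension count, using the universal family over $Z$ together with the first-order information carried by $v$. Let $\mathcal{U}\subset X\times_k Z$ be the pullback of the universal family to $Z$, with projection $\pi\colon \mathcal{U}\to Z$. It is flat and projective, and its fiber over $[\mathcal{C}]$ is $\mathcal{C}$. Smoothness of fibers is an open condition on the base; the locus where $\pi$ fails to be smooth is closed in $\mathcal{U}$ and has closed image in $Z$, since $\pi$ is proper. As $Z$ is a connected curve, it therefore suffices to produce a single point of $Z$, on the irreducible component through $[\mathcal{C}]$, whose fiber is smooth; the set of such points is then open and dense in that component. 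One then takes $U$ to be this open set, or argues that the relevant component is the only one that matters, after shrinking.

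The fibers of $\pi$ are smooth away from the nodes $q_1,\dots,q_m$ of $\mathcal{C}$, at least near $[\mathcal{C}]$. Indeed, the non-smooth locus $\Sigma\subset\mathcal{U}$ is closed, and its fiber over $[\mathcal{C}]$ is $\{q_1,\dots,q_m\}$, because $\mathcal{C}$ is smooth elsewhere. So the task is to show that $\Sigma\to Z$ is not surjective near $[\mathcal{C}]$, which we check locally at each $q_i$. After base change to $\overline{k}$ (using that $k(q_i)/k$ is separable, so that $q_i$ splits into ordinary nodes), the local deformation theory of an ordinary node $xy=0$ is versal: any flat deformation over $\widehat{\mathcal{O}}_{Z,[\mathcal{C}]}\cong\overline{k}[\![s]\!]$ is locally, in the complete local ring, of the form $xy=g(s)$ with $g(0)=0$. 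The node persists along a branch through $[\mathcal{C}]$ exactly when $\Sigma$ contains a curve over $Z$ through $q_i$, which happens precisely when $g\equiv 0$. It then suffices to show that $g'(0)\neq 0$, that is, that the first-order deformation $v$ smooths the node.

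The key identification, and the step I expect to require the most care, is that $\rho_i(v)$, the image of $v$ in $T_{C_i,q_i}$, equals (up to a unit) the first-order smoothing parameter of the node, namely the image of $v$ in $T^1_{\mathcal{C},q_i}=\mathrm{Ext}^1(\Omega_{\mathcal{C}},\mathcal{O})_{q_i}\cong k(q_i)$. I would compute this in local coordinates. Near $q_i$, take $X$ étale-locally with coordinates $x,y,z_3,\dots,z_n$, let $\mathcal{C}$ be cut out by $xy=0$, $z_j=0$, with $D=\{y=0\}$ and $C_i=\{x=0\}$. A section of $N_{\mathcal{C}/X}$ is a homomorphism $\varphi$ on $I/I^2$, and the associated deformation is $xy-\epsilon\varphi(xy)$, $z_j-\epsilon\varphi(z_j)$. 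Its node-smoothing term is $\varphi(xy)(q_i)$. Restricting $\varphi$ to $D$ and passing to the quotient in \eqref{normal_exact_sequence}, the component $T_{C_i,q_i}$ is the cokernel of $N_{D/X}\to N_{\mathcal{C}/X}|_D$. Locally this is given by $\varphi(xy)|_D$ modulo the image of $x\cdot\varphi(y)$, and it vanishes at $q_i$ precisely when $\varphi(xy)(q_i)=0$. Thus $\rho_i(v)\neq0$ means $g_i'(0)\neq0$ for each $i$.

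Given this, the final step is short. Since $Z$ is smooth at $[\mathcal{C}]$ with tangent direction $v$, each $g_i$ has a simple zero at $s=0$, so $g_i(s)\neq0$ for $s\neq0$ near $0$. Hence no node survives over a punctured neighborhood of $[\mathcal{C}]$ in $Z$, and $\Sigma$ is finite over $Z$ near $[\mathcal{C}]$, lying only over $[\mathcal{C}]$. Fibers over nearby points are therefore smooth projective curves, and the openness argument from the first paragraph yields the dense open $U$.
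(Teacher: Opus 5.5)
Your proposal is correct and follows essentially the same route as the paper. In both, you restrict the universal family to $Z$ and analyse it formally at each node $q_i$, show that $\rho_i(v)\neq 0$ means the constant term of the first-order smoothing parameter is nonzero, and conclude that the relative singular locus near $[\mathcal{C}]$ lies in the central fiber. The difference is one of execution. You invoke formal versality of the node to get the normal form $xy=g(s)$ and compare $g'(0)$ with the image of $v$ in $T^1$. The paper instead derives the normal form $xy-tu(t,x,y)$ by hand, lifting generators via flatness and Nakayama and eliminating the $z_j$. It then reads off from the Jacobian equations that $t\,(u-t\,\partial_x u\,\partial_y u)=0$ on the singular locus, so it needs no versality input.
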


\begin{proof}
We may assume that $k$ is algebraically closed. Let $s\coloneqq [\mc{C}]\in Z(k)$, and let $\pi\colon\widetilde{\mathcal C}\longrightarrow Z$ be the restriction of the universal family over the Hilbert scheme $\on{Hilb}_X$ to $Z\subset \on{Hilb}_X$. Thus $\widetilde{\mc{C}}$ is a
closed subscheme of $X\times Z$, flat and projective over $Z$, and the
fiber over $s$ is $\mc{C}$. Fix an integer $i\in\{1,\dots,m\}$, and let $q\coloneqq q_i\in \mc{C}(k)$. We may find a regular sequence $x,y,z_1,\dots,z_r$ in ${\mc{O}}_{X,q}$ such that $x,z_1,\dots,z_r$ generate the ideal of $D$ at $q$ and $y,z_1,\dots,z_r$ generate the ideal of $C_i$ at $q$, that is,
\begin{equation}\label{eq:b-local-parameters}
{\mc{O}}_{D,q}
=
{\mc{O}}_{X,q}/(x,z_1,\dots,z_r),\qquad 
{\mc{O}}_{C_i,q}
=
{\mc{O}}_{X,q}/(y,z_1,\dots,z_r),\qquad
{\mc{O}}_{\mc{C},q}
=
{\mc{O}}_{X,q}/(xy,z_1,\dots,z_r).
\end{equation}
By \cite[Lemma 07NM]{stacks-project}, we obtain an identification
\begin{equation}\label{b:local-x}
\widehat{\mc{O}}_{X,q}
= k[[x,y,z_1,\dots,z_r]].
\end{equation}
By \eqref{eq:b-local-parameters} and \eqref{b:local-x}, we obtain
\begin{align}
\widehat{\mc{O}}_{D,q}
=
\widehat{\mc{O}}_{X,q}/(x,z_1,\dots,z_r)&=k[[x,y,z_1,\dots,z_r]]/(x,z_1,\dots,z_r),
\label{eq:b-complete-rings-C0}\\
\widehat{\mc{O}}_{C_i,q}
=
\widehat{\mc{O}}_{X,q}/(y,z_1,\dots,z_r)&=k[[x,y,z_1,\dots,z_r]]/(y,z_1,\dots,z_r),
\label{eq:b-complete-rings-Ci}\\
\widehat{\mc{O}}_{\mc{C},q}
=
\widehat{\mc{O}}_{X,q}/(xy,z_1,\dots,z_r)&=k[[x,y,z_1,\dots,z_r]]/(xy,z_1,\dots,z_r).
\label{eq:b-complete-rings-C}
\end{align}
Fix a local parameter $t$ for $Z$ at $s$ such that 
\begin{equation}\label{eq:b-tangent-vector-is-v}
    \text{the tangent vector $\Spec k[\epsilon]/(\epsilon^2)\longrightarrow Z$ defined by $t\mapsto \epsilon$ maps to $v$ under
$T_sZ\to T_{[\mc{C}]}\operatorname{Hilb}_X$.}
\end{equation} 
We have identifications
\begin{equation}
    \widehat{\mc{O}}_{Z,s} = k[[t]],\qquad \widehat{\mc{O}}_{X\times Z,(q,s)}
= k[[t,x,y,z_1,\dots,z_r]],\qquad \widehat{\mc{O}}_{\widetilde{\mc{C}},(q,s)}
=
k[[t,x,y,z_1,\dots,z_r]]/J,
\end{equation} 
for some ideal $J$. 

Since the fiber of $\widetilde{\mc{C}}$ over $s$ is $\mc{C}$, we have
\begin{equation}\label{eq:b-j+t-t}
\frac{J}{J\cap (t)}=\frac{J+(t)}{(t)}
=
(xy,z_1,\dots,z_r)
\subset k[[x,y,z_1,\dots,z_r]].
\end{equation}
Since $\widehat{\mc{O}}_{\widetilde{\mc{C}},(q,s)}$ is flat over $\widehat{\mathcal{O}}_{Z,s}$, by \cite[Lemma 00HD]{stacks-project} we have $J\cap (t)=tJ$. Therefore the surjection $J\to \frac{J+(t)}{(t)}$ induces an isomorphism $J/tJ\cong \frac{J+(t)}{(t)}$. Choose elements $F_0,F_1,\dots,F_r\in J$ whose images modulo $t$ are $xy,z_1,\dots,z_r$, respectively. Then $J=(F_0,F_1,\dots,F_r)+tJ$, and hence by Nakayama's lemma $J=(F_0,F_1,\dots,F_r)$, that is, $F_0,F_1,\dots,F_r$ generate $J$. For $1\leq j\leq r$, we have $F_j\equiv z_j\pmod t$, and hence the elements
$t,x,y,F_1,\dots,F_r$ form a regular system of parameters of
$k[[t,x,y,z_1,\dots,z_r]]$. Hence, by
\cite[Lemma 07NM]{stacks-project}, we have an isomorphism
\begin{equation}\label{eq:b-txy}
k[[t,x,y,z_1,\dots,z_r]]/(F_1,\dots,F_r)\cong k[[t,x,y]].
\end{equation}
Under the isomorphism \eqref{eq:b-txy}, the remaining power series $F_0$ is sent to some $G=G(t,x,y)\in k[[t,x,y]]$ satisfying $G(0,x,y)=xy$, that is, $
G=xy-tu$ for some $u=u(t,x,y)\in k[[t,x,y]]$. Thus
\begin{equation}\label{eq:b-txyg}
\widehat{\mathcal{O}}_{\widetilde{\mc{C}},(q,s)}\cong k[[t,x,y]]/(G)=k[[t,x,y]]/(xy-tu).
\end{equation}
We have $tu(t,x,y)\equiv tu(0,x,y)\pmod{t^2}$. Therefore, the first-order deformation of $\mc{C}$ at $q$ corresponding to \eqref{eq:b-txyg} is given by the ideal $(xy-t u(0,x,y))
\subset k[[t,x,y]]/(t^2)$.
By the construction of \eqref{eq:def-to-hom}, the homomorphism $(xy)/(xy)^2\longrightarrow k[[x,y]]/(xy)$ corresponding to this deformation sends $xy+(xy)^2$ to $u(0,x,y)+(xy)$. By \eqref{eq:b-tangent-vector-is-v}, this homomorphism is the restriction, near $q=q_i$, of the tangent vector $v\in H^0(\mc{C},N_{\mc{C}/X})$. 

Under the identification of \eqref{b:local-x}, the ideal of $\mc{C}$ is $I_{\mc{C}}=(xy,z_1,\dots,z_r)$, and the ideal of $D$ is $I_{D}=(x,z_1,\dots,z_r)$; see \eqref{eq:b-complete-rings-C} and \eqref{eq:b-complete-rings-C0}. Recall how the sequence \eqref{normal_exact_sequence} is defined. The inclusion $I_{\mc{C}}\subset I_{D}$ induces a homomorphism $I_{\mc{C}}/I_{\mc{C}}^2\otimes_{\mc{O}_{\mc{C}}}\mc{O}_{D}\to I_{D}/I_{D}^2$. 
It sends the class of $xy$ to $y$ times the class of $x$, and it sends
the class of $z_\ell$ to the class of $z_\ell$. Dualizing, we get the map $N_{D/X}
\to (N_{\mc{C}/X})|_{D}$. Now restrict the section $v$ to $D$. Its value on the class of $xy$
is $u(0,0,y)\in \widehat{\mc{O}}_{D,q}$. On the other hand, any section coming from $N_{D/X}$ has value on the class of $xy$ lying in $y\widehat{\mc{O}}_{D,q}$. Therefore the image of $v$ in the cokernel $(N_{\mc{C}/X})|_{D}/N_{D/X}$ 
is represented at $q$ by the class of $u(0,0,y)$ modulo $y\widehat{\mc{O}}_{D,q}$. Since $\widehat{\mc{O}}_{D,q}/y\widehat{\mc{O}}_{D,q}\cong k$,
this class is equal to $u(0,0,0)$. Thus $\rho_i(v)\neq 0$ implies $u(0,0,0)\neq 0$. Therefore $u(t,x,y)$ is a unit in $k[[t,x,y]]$.

Since $\pi$ is flat, the singular locus of $\pi$ coincides with the locus of points that are singular in their fiber. In particular, $\pi$ is smooth near every point of $\mc{C}$ which is not a node. Under the identification of  \eqref{eq:b-txyg}, the singular locus of $\pi$ near the node $q$ is defined by
\[
G=\frac{\partial G}{\partial x}=\frac{\partial G}{\partial y}=0,
\]
that is,
\[
xy=tu,\qquad y=t\frac{\partial u}{\partial x},\qquad x=t\frac{\partial u}{\partial y}.
\]
This implies
\[
t
\left(
u
-
t
\frac{\partial u}{\partial x}
\frac{\partial u}{\partial y}
\right)
=0
\]
on the singular locus of $\pi$. Since $u(0,0,0)\neq 0$, the factor $u
-
t
\frac{\partial u}{\partial x}
\frac{\partial u}{\partial y}
$
is a unit, because its value at the closed point is $u(0,0,0)\neq 0$. Therefore $t=0$ on the singular locus of $\pi$. Since the completion of a Noetherian local
ring is faithfully flat, $t$ is already zero in the local ring of the
relative singular locus at $q$. Therefore there is an open neighborhood
of $q$ in $\widetilde{\mc{C}}$ such that every point of the relative singular
locus in that neighborhood lies over $s\in Z$. This implies the conclusion.
\end{proof}

\subsection{An arithmetic variant of the Graber--Harris--Starr comb smoothing theorem}

The following theorem is an arithmetic version of \cite[\S2.1, Lemma~2.6, and paragraph below Hypothesis~2.7]{graber2003families}.

\begin{thm}\label{thm:smoothing} Let $k$ be an infinite field.
Let $X$ be a smooth projective $k$-variety, let $D$ be a smooth projective curve over $k$, and let $Z \subset D$ be a finite closed subscheme. Let $\mathcal{C}$ be a comb with handle $D$ and teeth $(C_i)_{1 \leqslant i \leqslant m}$ over $k$, and let
$\iota \colon \mathcal{C} \hookrightarrow X$ be a closed immersion such that the
restrictions $C_i \to X$ are free rational curves whose images do not meet $\iota(Z)$.
Suppose that $H^1(D,\, \mathcal{N}_{\mathcal{C}/X}|_{D} \otimes \mathcal{O}_D(-Z)) = 0$ and that $\mathcal{N}_{\mathcal{C}/X}|_D \otimes \mathcal{O}_D(-Z)$ is globally generated.
Then there exists a smoothing of $\iota$ fixing $\iota(Z)$, in the sense of \Cref{def:smoothing}.
\end{thm}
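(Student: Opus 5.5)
The plan is to construct the smoothing inside a Hilbert scheme of subschemes of $X$ that contain $\iota(Z)$, and to use \Cref{prop:transverse-curve} to see that the general member of a suitable one-parameter family is smooth. Concretely, let $H\subset \operatorname{Hilb}_X$ be the closed subscheme parametrizing closed subschemes $W\subset X$ with $\iota(Z)\subset W$; equivalently, $H$ is a fiber of the forgetful map from the flag Hilbert scheme $\operatorname{Hilb}_{\iota(Z)\subset \,\cdot\,\subset X}$. The point $[\mathcal{C}]$ lies in $H$. Since $Z$ lies in the smooth locus of $\mathcal{C}$, and in fact in $D$ away from the nodes $q_i$ because the teeth do not meet $\iota(Z)$, standard deformation theory gives the following. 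The tangent space of $H$ at $[\mathcal{C}]$ is $H^0(\mathcal{C},\mathcal{N}_{\mathcal{C}/X}\otimes \mathcal{I}_Z)$, and the obstructions lie in $H^1(\mathcal{C},\mathcal{N}_{\mathcal{C}/X}\otimes\mathcal{I}_Z)$. Here $\mathcal{C}$ is a local complete intersection in $X$, so $\mathcal{N}_{\mathcal{C}/X}$ is a vector bundle. The first goal is therefore to prove that this $H^1$ vanishes, so that $H$ is smooth at $[\mathcal{C}]$.

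For the vanishing I will use the exact sequence
\[
0\longrightarrow \bigoplus_{i=1}^m \mathcal{N}_{\mathcal{C}/X}|_{C_i}(-q_i)\longrightarrow \mathcal{N}_{\mathcal{C}/X}\otimes\mathcal{I}_Z\longrightarrow \mathcal{N}_{\mathcal{C}/X}|_D\otimes\mathcal{O}_D(-Z)\longrightarrow 0,
\]
which comes from the normalization-type sequence of the comb, using that $Z\cap C_i=\emptyset$. The $H^1$ of the right-hand term vanishes by hypothesis. For the left-hand terms, the restriction $\mathcal{N}_{\mathcal{C}/X}|_{C_i}$ contains $\mathcal{N}_{C_i/X}$ as a subsheaf with cokernel of length one at $q_i$. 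This is the analogue of \eqref{normal_exact_sequence} with the roles of $D$ and $C_i$ exchanged. Hence $\mathcal{N}_{\mathcal{C}/X}|_{C_i}(-q_i)\supset \mathcal{N}_{C_i/X}(-q_i)$, again with a length-one cokernel.

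Since $C_i$ is free, $\mathcal{T}_X|_{C_i}$ is globally generated, and so is its quotient $\mathcal{N}_{C_i/X}$. It follows that $\mathcal{N}_{C_i/X}(-q_i)$ has all splitting degrees $\geqslant -1$. I will check this over $k'_i$ and then apply \Cref{rmk:field-extension}. So its $H^1$ vanishes, and the same holds for the larger sheaf, since the cokernel is finite. This gives $H^1=0$, so $H$ is smooth at $[\mathcal{C}]$. It also shows that
\[
H^0(\mathcal{C},\mathcal{N}_{\mathcal{C}/X}\otimes\mathcal{I}_Z)\longrightarrow H^0(D,\mathcal{N}_{\mathcal{C}/X}|_D(-Z))
\]
is surjective.

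Next I choose the tangent direction. By global generation of $\mathcal{N}_{\mathcal{C}/X}|_D(-Z)$ at each $q_i$ (note $q_i\notin Z$), composed with the surjection in \eqref{normal_exact_sequence}, each map $\rho_i$ restricted to $H^0(\mathcal{C},\mathcal{N}_{\mathcal{C}/X}\otimes\mathcal{I}_Z)$ is nonzero. The kernels of these maps are proper $k$-subspaces of a finite-dimensional $k$-vector space. Since $k$ is infinite, there is a $v$ in the tangent space of $H$ with $\rho_i(v)\neq 0$ for all $i$. Then I pick an irreducible curve $T'\subset H$ through $[\mathcal{C}]$, smooth at $[\mathcal{C}]$, with tangent vector $v$. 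To do this, take a smooth local chart of $H$ at the $k$-point $[\mathcal{C}]$ (étale coordinates), cut it by $\dim H-1$ general hypersurfaces through $[\mathcal{C}]$ whose tangent hyperplanes intersect in $k v$ (possible as $k$ is infinite), and take the irreducible component through $[\mathcal{C}]$. I expect this step to be the main technical obstacle over a non-closed field: one has to make sure that the resulting curve is defined over $k$ and is smooth at the rational point with exactly the prescribed tangent direction.

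Finally, \Cref{prop:transverse-curve} applied to $T'\subset H\subset\operatorname{Hilb}_X$ (the point $[\mathcal{C}]$ is smooth on $T'$, and $\rho_i(v)\neq0$) yields a dense open subset of $T'$ over which the fibers of the universal family are smooth. I then take $T$ to be the smooth locus of $T'$, intersected with this open set together with $[\mathcal{C}]$. Over $T$, the restricted universal family $Y\to T$ with $F\colon Y\to X$ the projection is flat and projective and smooth away from the fiber over $t_0=[\mathcal{C}]$, and its fiber over $t_0$ is $\mathcal{C}$. Because every member of $H$ contains $\iota(Z)$, the subscheme $T\times_k \iota(Z)$ lies in $Y\subset X\times T$. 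This gives the map $s$ required in \Cref{def:smoothing}(2), with $F\circ s=\iota|_Z\circ \operatorname{pr}_Z$. So the smoothing fixes $\iota(Z)$.
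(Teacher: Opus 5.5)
Your proposal is correct and follows the paper's proof essentially step by step. The ingredients are the same: the Hilbert scheme of subschemes containing $\iota(Z)$; the vanishing of $H^1(\mathcal{C},\mathcal{N}_{\mathcal{C}/X}\otimes\mathcal{O}_{\mathcal{C}}(-Z))$ together with surjectivity onto $H^0(D,\mathcal{N}_{\mathcal{C}/X}|_D(-Z))$, obtained from the sequence with the teeth twisted by $-q_i$ (you prove this directly via $\mathcal{N}_{C_i/X}(-q_i)\subset\mathcal{N}_{\mathcal{C}/X}|_{C_i}(-q_i)$, where the paper cites Koll\'ar's Lemma II.7.5); a tangent vector with all $\rho_i$ nonzero, chosen using that $k$ is infinite; and \Cref{prop:transverse-curve}.

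Two small remarks. First, choosing the curve through the smooth $k$-point $[\mathcal{C}]$ with tangent $v$ is not the obstacle you fear: cut an affine neighbourhood by $\dim H-1$ functions vanishing there whose differentials are independent and kill $v$. Second, as the paper does, handle the case $m=0$ separately by the constant family, since then $H^0(D,\mathcal{N}_{D/X}(-Z))$ can vanish and no such curve exists.
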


\begin{proof}
If $m=0$, the constant family gives the required smoothing. From now on, we assume $m\geq 1$. Let us consider the closed subscheme $\mathrm{Hilb}_{X,Z}\subset \mathrm{Hilb}_{X}$ parameterizing closed subschemes of $X$ containing $Z$: for every $k$-scheme $S$, we have
\[
\mathrm{Hilb}_{X,Z}(S)=\{\,W\subset X\times_k S \mid W \text{ is a closed subscheme flat over } S,\; Z\times_k S\subset W\,\}.
\]

Since $H^1(D,\, \mathcal{N}_{\mathcal{C}/X}|_{D} \otimes \mathcal{O}_D(-Z)) = 0$,
the subscheme $Z$ does not meet the $(C_i)_{1 \leqslant i \leqslant m}$, and the $C_i$ are
free curves on $X$, by \cite[II.7, Lemma~7.5]{kollar1996rational} we have
$H^1(\mathcal{C}, \mathcal{N}_{\mathcal{C}/X} \otimes \mathcal{O}_{\mathcal{C}}(-Z)) = 0$.
Thus the point $[\iota] \in \mathrm{Hilb}_{X,Z}(k)$ is smooth.

Let us denote by $q_i$ the intersection point of $C_i$ and $D$. Let us prove that
there exists an element of the tangent space
$H^0(\mathcal{C}, \mathcal{N}_{\mathcal{C}/X} \otimes \mathcal{O}_{\mathcal{C}}(-Z))$
whose image in $T_{C_i, q_i}$ is non-zero for every $i$. We have the exact sequence
\[
0 \to \mathcal{N}_{\mathcal{C}/X} \otimes \mathcal{O}_{\coprod C_i}(-q_1 - \cdots - q_m)
\to \mathcal{N}_{\mathcal{C}/X} \otimes \mathcal{O}_{\mathcal{C}}(-Z)
\to \mathcal{N}_{\mathcal{C}/X} \otimes \mathcal{O}_{\mathcal{C}}(-Z) \otimes \mathcal{O}_D
\to 0.
\]
Since the $C_i \to X$ are free rational curves, we have
$H^1(\coprod C_i, \mathcal{N}_{\mathcal{C}/X} \otimes \mathcal{O}_{\coprod C_i}(-q_1 - \cdots - q_m)) = 0$,
and thus the restriction map
\[
H^0(\mathcal{C}, \mathcal{N}_{\mathcal{C}/X} \otimes \mathcal{O}_{\mathcal{C}}(-Z))
\to H^0(D, \mathcal{N}_{\mathcal{C}/X}|_{D} \otimes \mathcal{O}_D(-Z))
\]
is surjective, and
\[
H^1(\mathcal{C}, \mathcal{N}_{\mathcal{C}/X} \otimes \mathcal{O}_{\mathcal{C}}(-Z))
\to H^1(D, \mathcal{N}_{\mathcal{C}/X}|_{D} \otimes \mathcal{O}_D(-Z))
\]
is bijective.

For every $i=1,\dots,m$, we have the exact sequence
\[
0 \to \mathcal{N}_{\mathcal{C}/X}|_{D} \otimes \mathcal{O}_D(-Z) \otimes \mathcal{O}_D(-q_i)
\to \mathcal{N}_{\mathcal{C}/X}|_{D} \otimes \mathcal{O}_D(-Z)
\to \mathcal{N}_{\mathcal{C}/X,q_i}
\to 0.
\]
Since $\mathcal{N}_{\mathcal{C}/X}|_D \otimes \mathcal{O}_D(-Z)$ is globally generated and $q_i \notin Z$, the homomorphism
\[H^0(D, \mathcal{N}_{\mathcal{C}/X}|_{D} \otimes \mathcal{O}_D(-Z))\otimes_k k(q_i)
\to \mathcal{N}_{\mathcal{C}/X,q_i}\]
is surjective. Since the map $\mathcal{N}_{\mathcal{C}/X,q_i} \to T_{C_i,q_i}$ is
also surjective, the composite
\[
H^0(\mathcal{C}, \mathcal{N}_{\mathcal{C}/X} \otimes \mathcal{O}_{\mathcal{C}}(-Z))\otimes_k k(q_i)
\to H^0(D, \mathcal{N}_{\mathcal{C}/X}|_{D} \otimes \mathcal{O}_D(-Z))\otimes_k k(q_i)
\to T_{C_i, q_i}
\]
is surjective for every $i$. Thus, since $k$ is infinite, there exists an element
$u \in H^0(\mathcal{C}, \mathcal{N}_{\mathcal{C}/X} \otimes \mathcal{O}_{\mathcal{C}}(-Z))$
whose image is non-zero in $T_{C_i, q_i}$ for every $i$. In particular, since $m\geq 1$, we have $u\neq 0$.

Let $T \subset \mathrm{Hilb}_{X,Z}$ be a smooth curve passing through the smooth $k$-point $[\iota]$ with
tangent direction $u$. This corresponds to a closed subscheme $Y \subset X \times_k T$
flat over $T$ containing $Z \times_k T$. The composite map
$T \to \mathrm{Hilb}_{X,Z} \hookrightarrow \mathrm{Hilb}_X$ corresponds to a curve
passing through $[\iota]$ with tangent direction given by the image of $u$ in
$H^0(\mathcal{C}, \mathcal{N}_{\mathcal{C}/X})$, and thus with a tangent direction
whose image in $T_{C_i, q_i}$ is non-zero for every $i$. By \Cref{prop:transverse-curve}, replacing $T$ by an open neighborhood of  $[\iota]$ in $T$ if necessary, we may assume that the map $Y \to T$ is smooth over $T \setminus \{[\iota]\}$. This gives the desired smoothing of $\iota$.
\end{proof}

\subsection{Fibrations over a curve and proof of Theorem~\ref{thm:add_enough_teeth}}

We now apply the comb smoothing technique to combs $C\to X$ whose handle $D$ is a section of a  projective morphism $X\to D$ which is smooth with separably rationally connected fibers over a nonempty open subscheme of $D$. 

The following lemma is an arithmetic analogue of a result from \cite[Section 2.2]{graber2003families}; see \cite[Lemma 10 and Theorem 16]{Kollar2004Specialisation}.

\begin{lemma}\label{existence_comb_smoothable_intermediaire}
Let $k$ be an infinite field, let $X$ be a smooth projective $k$-scheme, let $D$ be a smooth projective irreducible $k$-curve, and let $p\colon X\to D$ be a projective morphism, and let $D^\circ\subset D$ be a nonempty open subscheme such that $p^{-1}(D^\circ)\to D^\circ$ is smooth with separably rationally connected fibers. Set
\[
W:=X\times_k \mathbb P^3_k\times_k \mathbb P^1_k,
\]
and denote by $\mathrm{pr}_1\colon W\to X$, $\mathrm{pr}_2\colon W\to \mathbb{P}^3_k$ and $\mathrm{pr}_3\colon W\to \mathbb P^1_k$ the first, second and third projections, respectively. Let $Z\subset D$ be a finite closed subscheme, let $\mathcal C$ be a comb with handle $D$ and teeth $(C_i)_{1\le i\le m}$, all disjoint from $Z$, and let $M$ be a line bundle on $D$. Let $\iota\colon \mathcal C\hookrightarrow W$ be a closed immersion such that:
\begin{enumerate}[label=\textup{(H\arabic*)}]
\item $\mathrm{pr}_1\circ\iota|_D\colon D\to X$ is a section of $p$, and $\mathrm{pr}_3\circ\iota|_D\colon D\to\mathbb P^1_k$ is the constant map with value $0$;

\item for every $i$, the tooth $C_i$ is free in $W$, and $\mathrm{pr}_3\circ\iota|_{C_i}\colon C_i\to\mathbb P^1_k$ is étale;

\item for every $i$, the composite $C_i\xrightarrow{\iota|_{C_i}}W\xrightarrow{\mathrm{pr}_1}X\xrightarrow{p}D$ is constant; denoting by $d_i\in D$ its image and setting $X_{d_i}\coloneqq X\times_D\Spec k(d_i)$, we have that $d_i\in D^\circ$, and that the $k(d_i)$-rational curve
\[
C_i\xrightarrow{\iota|_{C_i}}
X_{d_i}\times_k\mathbb P^3_k\times_k\mathbb P^1_k
\xrightarrow{\mathrm{pr}_1}
X_{d_i}
\]
is very free.
\end{enumerate}

Then there exists a comb $\mathcal C'=\mathcal C\cup C_{m+1}$ with handle $D$ together with a closed immersion $\iota'\colon\mathcal C'\hookrightarrow W$ extending $\iota$, such that:
\begin{enumerate}
\item $\mathrm{pr}_3\circ\iota'|_{C_{m+1}}\colon C_{m+1}\to\mathbb P^1_k$ is étale;

\item $C_{m+1}\cap Z=\varnothing$;

\item the composite $C_{m+1}\xrightarrow{\iota'_{|C_{m+1}}} W \xrightarrow{\mathrm{pr}_1}X\xrightarrow{p}D$ is constant, with image denoted by $d_{m+1}$; we have $d_{m+1}\in D^\circ$; moreover, the tooth $C_{m+1}$ is free in $W$, and the induced morphism $C_{m+1}\to X_{d_{m+1}}$ is very free;

\item if $H^1(D,\mathcal N_{\mathcal C/W}|_D \otimes  M)\neq 0$, then
\[
\dim_k H^1\bigl(D,\mathcal N_{\mathcal C'/W}|_D\otimes M\bigr)
<
\dim_k H^1\bigl(D,\mathcal N_{\mathcal C/W}|_D\otimes M\bigr).
\]
\end{enumerate}
\end{lemma}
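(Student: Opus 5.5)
We follow Kollár's argument for \cite[Lemma 10 and Theorem 16]{Kollar2004Specialisation}: a nonzero class in $H^1$ is killed by attaching one well-chosen vertical tooth. Write $N\coloneqq \mathcal N_{\mathcal C/W}|_D\otimes M$.

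\textbf{Step 1 (duality and a point-dependent kernel).} By Serre duality, a nonzero class in $H^1(D,N)$ corresponds to a nonzero section $\sigma\in H^0(D,N^\vee\otimes\omega_D)$. If we attach at a point $q\in D$ a new tooth whose tangent direction at $q$ is $w\in T_qW$, then $\mathcal N_{\mathcal C'/W}|_D$ is an elementary transformation of $\mathcal N_{\mathcal C/W}|_D$ at $q$ along the line spanned by the image $\bar w$ of $w$ in $(\mathcal N_{D/W})_q\cong(\mathcal N_{\mathcal C/W}|_D)_q$. Concretely, there is an exact sequence
\[
0\to\mathcal N_{\mathcal C/W}|_D\to\mathcal N_{\mathcal C'/W}|_D\to k(q)\to 0,
\]
which is the analogue of \eqref{normal_exact_sequence}, with the new tooth spanning the torsion quotient. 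Dually, $H^0(N'^\vee\otimes\omega_D)\subset H^0(N^\vee\otimes\omega_D)$ consists of those sections whose value at $q$ vanishes on the line $\bar w$. So it suffices to choose $q$ and $w$ with $\sigma(q)(\bar w)\neq0$. The $H^1$ then drops, since both $H^0(N^\vee\otimes\omega_D)$ and $H^0(N'^\vee\otimes\omega_D)$ are finite dimensional and the second is a strictly smaller subspace.

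\textbf{Step 2 (constructing the tooth).} Choose $q\in D^\circ$ such that $q\notin Z$, $q\ne q_i$ and $q\neq d_i$ for all $i$, and $\sigma(q)\ne0$. We also want $k(q)/k$ finite separable; since $k$ is infinite such points are dense by \cite[Proposition~6.1]{Gille2021}. Let $x\coloneqq \iota(q)\in W$. The fiber of $W\to D$ over $q$ is $X_q\times\mathbb P^3\times\mathbb P^1$, and it is separably rationally connected. The vertical tangent space $\mathcal T_{W/D,x}$ surjects onto $(\mathcal N_{D/W})_q$, because $D$ is a section. Hence the nonzero functional $\sigma(q)$ does not vanish on a nonempty open set of vertical directions $w$.

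We then apply \Cref{avf_passing_Z}(2), or directly \Cref{lem:ev-surj}(2), to $W\to D$, viewed as $(X\times\mathbb P^1)\times\mathbb P^3$ over $D^\circ$. This gives an embedded very free curve $f\colon\mathbb P^1_{k(q)}\to W_q$ through $x$ with tangent $w$ generic. For generic $w$ we can also arrange the following: $w$ has nonzero $\mathbb P^1$-component, so $\mathrm{pr}_3\circ f$ is étale at $x$ (and we can use lines in the $\mathbb P^1\times\mathbb P^3$ factors to make it étale everywhere); $\sigma(q)(\bar w)\neq0$; and the curve is transverse to $D$ at $q$. The $\mathbb P^3$ factor is there to ensure the new tooth is embedded and, by a dimension count ($\dim\ge 3$ room), meets $\mathcal C$ only at $q$; this is what makes $\mathcal C'$ a comb with an ordinary node at $q$. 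Since the fiber $W_q\to$ point is smooth, \Cref{vf_avf} shows that $C_{m+1}$ is free in $W$, and its projection to $X_q$ is very free. To see the latter, note that the very free component can be taken in $X_q$ with a line in the other factors, and the pullback of the tangent bundle is then a direct sum of ample bundles. This gives items (1)–(3) of the lemma.

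\textbf{Main obstacle.} The delicate step is arranging simultaneously that the new tooth is vertical, very free in $X_q$, étale over $\mathbb P^1$, embedded, and meets $\mathcal C$ only at $q$, all while its tangent direction is prescribed to first order and $q$ is defined over a separable extension. This is why the proof goes through the tangent evaluation map of \Cref{lem:ev-surj}(2): it is smooth, and its image contains every direction not lying in the $X$-factor alone. Smoothness lets us find $k(q)$-points in a nonempty open locus of curves. Residual over-rationality issues are handled by noting that, for $k$ infinite and $q$ separable, rational points of smooth $k(q)$-varieties are dense.
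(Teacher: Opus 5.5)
Your Step 1 is correct and is the dual form of the paper's argument: the paper's sequence $0\to (I_{\mathcal C'}/I_{\mathcal C'}^2)|_D\to (I_{\mathcal C}/I_{\mathcal C}^2)|_D\to L(z)^*\to 0$ is exactly your elementary transformation, twisted by $M$.

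The gap is in Step 2, and it is the arithmetic heart of this lemma. You fix the attachment point $q$ first, requiring only that $k(q)/k$ be separable and $\sigma(q)\neq 0$. You then ask for a $k(q)$-rational, embedded, vertical very free curve through $\iota(q)$ with a prescribed generic tangent $w$. \Cref{lem:ev-surj} only tells you that the fiber of the tangent evaluation map over $w$ (or over the open set of admissible $w$) is a nonempty smooth $k(q)$-variety. Your justification, that ``rational points of smooth $k(q)$-varieties are dense'', is false over a general infinite field: a smooth conic over $\mathbb{Q}$ without rational points is a counterexample. That property characterizes large fields, and even then it holds only once one rational point is known. So at a pre-chosen $q$ you may have no $k(q)$-rational tooth at all. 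Likewise, \Cref{avf_passing_Z}(2) does not produce curves at an arbitrary prescribed point. It produces them only at points of a certain dense set, and only in the directions of a particular basis.

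The missing idea is to let the point move together with the curves. In the proof of \Cref{avf_passing_Z}, the spaces of vertical very free curves with tangent along $v_i$ are taken over the whole open $U\subset D^\circ$. Their fiber product is smooth and surjective over $U$. This is what yields, via \cite[Proposition~6.1]{Gille2021}, a dense set of closed points $z$ with $k(z)/k$ separable over which that space has $k(z)$-points, that is, $k(z)$-rational teeth.

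The paper therefore chooses $z$ in this dense set, intersected with the open subset where $\varphi$ (your $\sigma$) has rank one, and away from $Z$ and the $q_i$. It then controls only a finite family of directions: a basis $u_1,\dots,u_n$ of $(\mathcal N_{D/X\times\mathbb P^3})_z$, completed by two teeth $F_0,F_1$ with different $\mathrm{pr}_3$-derivatives so that the tangents span $(\mathcal N_{D/W})_z$. So you must replace ``generic $w$'' by ``some member of this spanning family''. This suffices, because the nonzero functional $\sigma(z)$ cannot vanish on a spanning set.

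With this change your outline goes through. Two of your side arguments can be simplified. The new tooth meets $\mathcal C$ only at $z$ because it is vertical over $z\neq q_i$ and $\iota(D)$ is a section, so no dimension count is needed. Very freeness in the fiber forces very freeness of the projection to $X_z$, because the pullback of a product tangent bundle is a direct sum.
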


\begin{proof}
The proof follows the argument of \cite[Lemma 10]{Kollar2004Specialisation}. By Serre duality,
\[
H^1(D,\, \mathcal N_{\mathcal C/W}|_D \otimes M)^\vee
\simeq
\operatorname{Hom}(M \otimes \omega_D^{-1}, (I_{\mathcal C}/I_{\mathcal C}^2)|_D).
\]
Let $\varphi \in \operatorname{Hom}(M \otimes \omega_D^{-1}, (I_{\mathcal C}/I_{\mathcal C}^2)|_D)\setminus \{0\}$, and let $U\subset D$ be the dense open subscheme over which $\varphi$ has rank one. Replacing $U$ by $U\cap D^\circ$, we may assume that $U\subset D^\circ$. We apply \Cref{avf_passing_Z} to the smooth projective morphism $X_{D^\circ}\times_k\mathbb P^3_k\to D^\circ$ and to the image of $(\mathrm{pr}_{1,2}\circ\iota)(D^\circ)$, where $\mathrm{pr}_{1,2}\colon W\to X\times_k \mathbb P^3_k$ is the projection. Identifying $D^\circ$ with its image $(\mathrm{pr}_{1,2}\circ\iota)(D^\circ)$, there exists a closed point $z \in U$, with $k(z)/k$ separable, avoiding $Z$ 
and the points $C_i \cap D$, together with a basis
$u_1, \dots, u_n \in (\mathcal{N}_{D/X \times_k \mathbb{P}^3_k})_z$
such that for each $j=1,\dots,n$ there exists a free curve $f_j \colon \mathbb{P}^1_{k(z)} \to X \times_k \mathbb{P}^3_k$
satisfying $f_j(0) = z$, such that $p \circ \on{pr}_1 \circ f_j$ is constant, the vector 
$T_0(f_j)(\partial/\partial t)$ maps to $u_j$ via the map 
$(\mathcal{T}_{X \times_k \mathbb{P}^3_k / D})_z \to (\mathcal{N}_{D / X \times_k 
\mathbb{P}^3_k})_z$, and such that $(f_j)_{p(z)} \colon \mathbb{P}^1_{k(z)} \to 
(X \times_k \mathbb{P}^3_k)_{p(z)}$ is a very free closed immersion.

Since $k$ is infinite, for every $j=1,\dots,n$ we can choose a closed immersion
$F_j\colon \mathbb P^1_{k(z)}\to W$
such that $F_j(0)=z$, the image of $F_j$ meets $\mathcal C$ only at $z$, its projection to $X\times_k\mathbb P^3_k$ equals $f_j$, and $\mathrm{pr}_3\circ F_j$ is étale.
In addition, since $k$ is infinite, we may choose another closed immersion
$F_0\colon \mathbb P^1_{k(z)}\to W$
satisfying the same properties as $F_1$, but such that
\[
T_0(\mathrm{pr}_3\circ F_0)(\partial/\partial t)\neq T_0(\mathrm{pr}_3\circ F_1)(\partial/\partial t).
\]
It follows that the images of
\[
(T_0(F_0)(\partial/\partial t),T_0(F_1)(\partial/\partial t),\dots,T_0(F_n)(\partial/\partial t))
\]
in $N_{z,D}\coloneqq (\mathcal N_{D/W})_z$ form a basis of $N_{z,D}$, and each $F_j$ induces a very free curve in the fiber $W_{p(z)}$.

Let $\mathcal C(F_j)$ be the comb obtained by attaching the tooth $F_j$ to $\mathcal C$, and let $L_j(z)$ be the tangent line of $F_j$ at $z$.
Let $r_z\colon (I_{\mathcal C}/I_{\mathcal C}^2)|_D\to N_{z,D}^*$ be restriction, and let $q_{j,z}\colon N_{z,D}^*\to L_j(z)^*$ be dual to the inclusion $L_j(z)\subset N_{z,D}$. We obtain an exact sequence
\[
0\to (I_{\mathcal C(F_j)}/I_{\mathcal C(F_j)}^2)|_D
\to (I_{\mathcal C}/I_{\mathcal C}^2)|_D
\xrightarrow{q_{j,z}\circ r_z}
L_j(z)^*
\to 0.
\]
The map
\[
r_z\circ \varphi\colon M\otimes \omega_D^{-1}\otimes_{\mathcal O_D} k(z)\to N_{z,D}^*
\]
is injective by the choice of $z$.
Since $(T_0(F_0)(\partial/\partial t),\dots,T_0(F_n)(\partial/\partial t))$ is a basis of $N_{z,D}$, there exists $j$ such that $q_{j,z}\circ r_z\circ \varphi$ is injective. Choose such $j$, define $C_{m+1}$ as the corresponding tooth, and set $\mathcal C'=\mathcal C(F_j)$.
We obtain
\begin{equation}\label{reduction_coho_eq}
\dim_k H^1(D,\mathcal N_{\mathcal C'/W}|_D\otimes M)
<
\dim_k H^1(D,\mathcal N_{\mathcal C/W}|_D\otimes M).
\end{equation}
Since $(f_j)_{p(z)}$ is very free, the composite
\[
\mathbb{P}^1_{k(z)} \xrightarrow{(f_j)_{p(z)}} X_{p(z)} \times_k \mathbb{P}^3_k 
\xrightarrow{\mathrm{pr}_1'} X_{p(z)}
\]
is very free. Since $\mathrm{pr}_1 \circ F_j = \mathrm{pr}_1' \circ f_j$, where 
$\mathrm{pr}_1' \colon X \times_k \mathbb{P}^3_k \to X$ denotes the first projection,
the induced morphism $C_{m+1} \to X_{p(z)}$ is very free, verifying condition~(3).
Conditions~(1) and~(2) follow from the construction of $F_j$, and condition~(4) 
is \eqref{reduction_coho_eq}.
\end{proof}

The next lemma will be used to construct combs satisfying the hypotheses of \Cref{thm:smoothing}.

\begin{lemma}\label{existence_comb_smoothable}
Let $k$ be an infinite field, let $X$ be a smooth projective $k$-scheme, let $D$ be a smooth projective geometrically connected $k$-curve, and let
$p \colon X \to D$ be a projective morphism. Let $D^\circ\subset D$ be a nonempty open subscheme such that $p^{-1}(D^\circ)\to D^\circ$ is smooth with separably rationally connected fibers. Set $W \coloneqq X \times_k \mathbb{P}^3_k \times_k \mathbb{P}^1_k$, and denote by $\on{pr}_1 \colon W \to X$, $\on{pr}_2 \colon W \to \mathbb{P}^3_k$, and $\on{pr}_3 \colon W \to \mathbb{P}^1_k$ the natural projections. Let $Z \subset D$ be a finite closed subscheme, and let
$\iota \colon D \hookrightarrow W$
be a closed immersion such that:
\begin{enumerate}[label=\textup{(H\arabic*)}]
    \item the composition
    $D \xhookrightarrow{\iota} W \xrightarrow{\on{pr}_3} \mathbb{P}^1_k$
    is the constant map with value $0 \in \mathbb{P}^1_k$;

    \item the composition
    $D \xhookrightarrow{\iota} W \xrightarrow{\on{pr}_1} X$
    is a section of $p$.
\end{enumerate}

Then, for every integer $N\geqslant 0$, there exist a comb $\mathcal{C}$ with handle $D$ and teeth $(C_i)_{1\leq i\leq m}$, with $m>N$, all attached at points of $D\setminus Z$, and a closed immersion
$\iota_{\mathcal{C}}\colon \mathcal{C}\hookrightarrow W$ extending $\iota$, such that:
\begin{enumerate}
    \item for every $i$, the morphism
    $\on{pr}_3 \circ \iota_{\mathcal{C}}|_{C_i}\colon C_i\to\mathbb{P}^1_k$
    is étale;

    \item
    \[
    H^1\!\left(
    D,\,
    \mathcal{N}_{\mathcal{C}/W}\big|_{D}
    \otimes \mathcal{O}_D(-Z)
    \right)=0;
    \]

    \item $\mathcal{N}_{\mathcal{C}/W}|_D \otimes \mathcal{O}_D(-Z)$ is globally generated;

    \item for every $i$, the composite
    \[
    C_i \hookrightarrow \mathcal{C}
    \xrightarrow{\iota_{\mathcal{C}}} W
    \xrightarrow{\on{pr}_1} X
    \xrightarrow{p} D
    \]
    is constant. If $d_i\in D$ denotes its image, then $d_i\in D^\circ$, and the induced morphism
    \[
    C_i \to X_{d_i}\coloneqq X\times_D\operatorname{Spec} k(d_i)
    \]
    is very free.
\end{enumerate}
\end{lemma}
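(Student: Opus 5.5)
The plan is to start from the comb with no teeth, $\mathcal{C}_0=D$ with $\iota_{\mathcal{C}_0}=\iota$, and attach teeth one at a time using \Cref{existence_comb_smoothable_intermediaire}. At each stage the current comb satisfies the hypotheses (H1)--(H3) of that lemma: (H1) is given for $D$, and (H2), (H3) are exactly conclusions (1) and (3) of \Cref{existence_comb_smoothable_intermediaire} for each newly added tooth. Conclusion (2) of that lemma ensures that every tooth is attached at a point of $D\setminus Z$, and conclusion (3) gives property (4) of the statement. Freeness of each tooth in $W$ is needed as hypothesis (H2) at the next step, and it is also provided by conclusion (3).

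To obtain (2) and (3) of the statement, I would fix a closed point $q\in D$ and apply the iteration with the twists $M=\mathcal{O}_D(-Z)$ and $M=\mathcal{O}_D(-Z-q)$, or better, with $M$ running over a suitable finite family. The key observation is that attaching teeth only makes $\mathcal{N}_{\mathcal{C}/W}|_D$ larger: by the exact sequence \eqref{normal_exact_sequence} it contains $\mathcal{N}_{D/W}$ with torsion cokernel, so $H^1(D,\mathcal{N}_{\mathcal{C}/W}|_D\otimes M)$ is non-increasing when a tooth is added. It never increases, and by (4) of \Cref{existence_comb_smoothable_intermediaire} it strictly drops whenever it is nonzero. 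Since $H^1$ is finite-dimensional, finitely many steps give $H^1(D,\mathcal{N}_{\mathcal{C}/W}|_D\otimes\mathcal{O}_D(-Z))=0$. For global generation, I would run the same procedure with $M=\mathcal{O}_D(-Z-x)$ for each closed point $x$. To keep this finite, I would instead aim for vanishing of $H^1(D,\mathcal{N}_{\mathcal{C}/W}|_D\otimes\mathcal{O}_D(-Z)\otimes L^{-1})$ for a single line bundle $L$ of degree $\geqslant 2g$, or of degree $\geqslant 2g+1$ if needed.

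The step from this vanishing to global generation is the main obstacle. The plan is as follows. Let $E$ be a vector bundle on $D$ and $L$ a line bundle with $\deg L\geqslant 2g$. If $H^1(E\otimes L^{-1})=0$, then twisting $L^{-1}$ by an effective divisor of degree $\deg L - 2g+2$ (Riemann--Roch / Castelnuovo--Mumford style) should show that $H^1(E(-x))=0$ for all closed points $x$, hence that $E$ is globally generated. A cleaner route is to use several twists $L_1,\dots,L_r$ of degree $g$ in general position, or simply to apply the iteration to the $H^1$ of $E(-x)$ for finitely many points $x$ and then use openness. I would go with the following: after $H^1(E(-Z))=0$, keep adding teeth until $H^1(E(-Z)\otimes\mathcal{O}_D(-B))=0$ for an effective divisor $B$ of degree $2g$ supported on $k$-rational or separable points. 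A degree-$2g$ twist kills $H^1(\mathcal{O}(-x))$-type obstructions by Riemann--Roch, since every point divisor difference $B-x$ that arises is of degree $\geqslant 2g-1$ and one can compare with $\mathcal{O}_D(B-x)$ being nonspecial, so $H^1(E(-Z-x))=0$ and global generation follows.

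Finally, to ensure $m>N$, I would add extra teeth once the vanishing has been achieved: apply \Cref{existence_comb_smoothable_intermediaire} with any $M$, or directly construct additional teeth as in its proof via \Cref{avf_passing_Z}. Vanishing of $H^1$ and global generation are preserved by adding teeth, since the elementary modification enlarges the bundle by a torsion quotient: $H^1$ surjects onto $H^1$ of the new bundle, and sections of the old bundle persist. Properties (1) and (4) hold for each tooth by construction.
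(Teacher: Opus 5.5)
Your route is the paper's. You start from the toothless comb and apply \Cref{existence_comb_smoothable_intermediaire} repeatedly, first with $M=\mathcal{O}_D(-Z)$ and then with a further negative twist $M=\mathcal{O}_D(-Z)\otimes L^{-1}$. You use that $H^1$ can only drop when teeth are added, and you deduce global generation from the twisted vanishing. The paper takes $L=\mathcal{L}$ very ample and invokes Castelnuovo--Mumford regularity; on a curve, $0$-regularity of $F$ is just $H^1(F\otimes\mathcal{L}^{-1})=0$.

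Your Riemann--Roch variant is fine in principle, but the version you finally commit to is not. With $\deg B=2g$ it fails for $g=0$: then $B=0$, and $H^1(D,E(-Z))=0$ does not imply that $E(-Z)$ is globally generated (think of $\mathcal{O}_{\mathbb{P}^1}(-1)$). Genus-zero handles are precisely a case the paper needs. What the argument uses is not that $\mathcal{O}_D(B-x)$ is nonspecial, but that it has a nonzero section for every point $x$, so that $E(-Z-B)\hookrightarrow E(-Z-x)$ with torsion cokernel. Riemann--Roch gives this as soon as $\deg B\geq g+1$, so your fallback $2g+1$ works. A hyperplane section of a projective embedding gives it regardless of degree, and that is the paper's choice.

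You also need to run this over $\overline{k}$. For a closed point $x$ of degree larger than $\deg B$, the divisor $B-x$ has negative degree, so "for all closed points $x$" fails as stated. Argue with $\overline{k}$-points and then descend global generation, which is harmless since $H^0$ and $H^1$ commute with field extension.

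Finally, your justification that global generation survives the extra teeth added to reach $m>N$ is incomplete. At the new node $q$, sections of the old bundle only span the image of $\mathcal{N}_{\mathcal{C}/W}|_D\otimes k(q)$ in the new fiber, and miss the cokernel $T_{C_{m+1},q}$. The claim is still true, because $H^1(D,\mathcal{N}_{\mathcal{C}/W}|_D\otimes\mathcal{O}_D(-Z))=0$ lets you lift a section onto that cokernel. It is simpler to do what the paper does: the twisted vanishing persists when teeth are added, so add all the teeth first and deduce global generation once, for the final comb.
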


\begin{proof}
Applying \Cref{existence_comb_smoothable_intermediaire} repeatedly with the sheaf $M=\mathcal{O}(-Z)$, we obtain a comb $\mathcal{C}_1$ with handle $D$ and teeth not meeting $Z$, together with a closed immersion
$\iota_{\mathcal{C}_1}\colon \mathcal{C}_1\hookrightarrow X \times_k \mathbb{P}^3_k \times_k \mathbb{P}^1_k$
extending $\iota$, satisfying conditions (1), (2), and (4) of the statement, and such that $\iota_{\mathcal{C}_1}$ satisfies (H1), (H2), and (H3) of \Cref{existence_comb_smoothable_intermediaire}.
Fix a very ample line bundle $\mathcal{L}$ on $D$. Applying \Cref{existence_comb_smoothable_intermediaire} repeatedly again, now with $M=\mathcal{O}_D(-Z) \otimes\mathcal{L}^{-1}$, we can add teeth to $\mathcal{C}_1$ and obtain a comb $\mathcal{C}$ with handle $D$, teeth not meeting $Z$, and more than $N$ teeth, together with a closed immersion
$\iota_{\mathcal{C}}\colon \mathcal{C}\hookrightarrow X \times_k \mathbb{P}^3_k \times_k \mathbb{P}^1_k$
extending $\iota_{\mathcal{C}_1}$, such that $\mathcal{C}$ satisfies conditions (1) and (4) of the statement, and such that
\[
H^1\!\left(
D,\,
\mathcal{N}_{\mathcal{C}/W}\big|_D\otimes\mathcal{O}_D(-Z) \otimes\mathcal{L}^{-1}
\right)=0.
\]
By the Castelnuovo--Mumford criterion (see \cite[08A8]{stacks-project}), it follows that $\mathcal{N}_{\mathcal{C}/W}|_D \otimes \mathcal{O}_D(-Z)$ is globally generated. Hence $\mathcal{C}$ satisfies (3). Moreover, adding teeth can only decrease the dimension of the relevant $H^1$, so $\mathcal{C}$ still satisfies (2). Thus $\mathcal{C}$ is the desired comb.
\end{proof}

\begin{rmk}
The arguments used in the proofs of Lemmas~\ref{existence_comb_smoothable_intermediaire} and~\ref{existence_comb_smoothable} are similar to those appearing in the proof of \cite[Theorem~27]{AraujoKollar2003}.
\end{rmk}

We are now ready to prove \Cref{thm:add_enough_teeth}.

\begin{proof}[Proof of \Cref{thm:add_enough_teeth}]
As $D$ is a smooth curve, it admits a closed immersion $j \colon D \hookrightarrow 
\mathbb{P}^3_k$; see \cite[Chapter IV, Corollary 3.6]{Hartshorne1977algebraic}. Consider the closed immersion
\[
\iota = (s,j,0) \colon D \hookrightarrow X \times_k \mathbb{P}^3_k \times_k \mathbb{P}^1_k.
\]
By \Cref{existence_comb_smoothable}, with the open subscheme $D^\circ\subset D$, there exists a comb $\mathcal{C}$ with handle $D$ 
and teeth $(C_i)_{1 \leqslant i \leqslant m}$, with $m > N$, all attached at points of 
$D \setminus Z$, together with a closed immersion 
\[
\iota_{\mathcal{C}} \colon \mathcal{C} \hookrightarrow X \times_k \mathbb{P}^3_k 
\times_k \mathbb{P}^1_k,
\]
such that the following conditions hold. For each $i$, the composite
\[
C_i \xhookrightarrow{\iota_{\mathcal{C}}|_{C_i}} X \times_k \mathbb{P}^3_k \times_k 
\mathbb{P}^1_k \to X \to D
\]
is constant with image some point $d_i \in D^\circ$, and the $k(d_i)$-rational curve obtained by the composite
\[
C_i \to X_{d_i} \times_k \mathbb{P}^3_k \times_k \mathbb{P}^1_k \to X_{d_i}
\]
is very free, where $X_{d_i} \coloneqq X \times_D \operatorname{Spec} k(d_i)$. By \Cref{vf_avf}, the composite $C_i\to X$ is almost very free for every $i$. Moreover, $H^1(D,\, \mathcal{N}_{\mathcal{C}/X \times_k \mathbb{P}^3_k \times_k 
\mathbb{P}^1_k}|_D \otimes \mathcal{O}_D(-Z)) = 0$ and $\mathcal{N}_{\mathcal{C}/X \times_k \mathbb{P}^3_k \times_k 
\mathbb{P}^1_k}|_D \otimes \mathcal{O}_D(-Z)$ is globally generated.
By \Cref{thm:smoothing}, the closed immersion $\iota_{\mathcal{C}} \colon \mathcal{C} \to 
X \times_k \mathbb{P}^3_k \times_k \mathbb{P}^1_k$ is smoothable fixing $\iota_{\mathcal{C}}(Z)$, and the same smoothing yields a 
smoothing of the composite $\mathcal{C} \to X \times_k \mathbb{P}^3_k \times_k 
\mathbb{P}^1_k \to X$ fixing $s(Z)$. The comb $\mathcal{C}$ therefore satisfies all the 
conditions of the statement.
\end{proof}

\section{Weil restrictions and Henselian lifting}\label{sec:5}

\subsection{Smooth points of Weil restrictions}

Let $S$ be a scheme, let $\mc{X}$ and $\mc{Y}$ be flat projective $S$-algebraic spaces locally of finite presentation, and let $p\colon \mc{X} \to \mc{Y}$ be a projective morphism. Recall that the Weil restriction functor $\mathrm{Res}_{\mc{Y}/S}(\mc{X})$ is the presheaf which associates to a $S$-scheme $S'$ the set
\[\mathrm{Res}_{\mc{Y}/S}(\mc{X})(S')=\on{Hom}_\mc{Y}(\mc{Y}\times_S S',\mc{X}),\]
where $\mc{Y}\times_S S'$ is viewed as a $\mc{Y}$-scheme via the first projection $\mc{Y}\times_S S'\to \mc{Y}$. The functor $\mathrm{Res}_{\mc{Y}/S}(\mc{X})$ is representable by a $S$-algebraic space locally of finite type: by Noetherian approximation, one reduces to the case when $S$ is Noetherian, in which case the statement is proved in \cite[Theorem 1.5]{Olsson2006underline}. For a $S$-scheme $S'$ and a $\mc{Y}$-morphism $f\colon \mc{Y}\times_S S'\to \mc{X}$, we write $[f]$ for the corresponding $S'$-point of $\mathrm{Res}_{\mc{Y}/S}(\mc{X})$.

\begin{lemma}\label{smoothness_weil}
Let $S$ be a scheme, let $\mc{Y}$ be  a smooth projective $S$-scheme, and let $p\colon \mc{X} \to \mc{Y}$ be a proper finitely presented morphism of $S$-algebraic spaces. Let $k$ be a field equipped with a map $\operatorname{Spec} k \to S$, and let $f \colon \mc{Y}_k \to \mc{X}_k$ be a section of $p_k \colon \mc{X}_k \to \mc{Y}_k$. Assume that $p$ is smooth in an open neighborhood of $f(\mc{Y}_k)$. If $H^1(\mc{Y}_k, f^*\mathcal{T}_{\mc{X}/\mc{Y}}) = 0$, then the image of $[f]\in \mathrm{Res}_{\mc{Y}/S}(\mc{X})(k)$ lies in the smooth locus of the algebraic space $\mathrm{Res}_{\mc{Y}/S}(\mc{X})$.
\end{lemma}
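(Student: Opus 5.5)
We will use the infinitesimal lifting criterion for smoothness. Since $\mathrm{Res}_{\mc{Y}/S}(\mc{X})$ is locally of finite presentation over $S$ (after Noetherian approximation), it suffices to show that the morphism $\mathrm{Res}_{\mc{Y}/S}(\mc{X})\to S$ is formally smooth at the point $[f]$. Concretely, let $A'\to A$ be a surjection of local Artinian rings with kernel $I$ satisfying $I\cdot\mathfrak{m}_{A'}=0$, with residue field a finite extension of $k$, and compatible maps $\Spec A'\to S$. We must show that every $A$-point of $\mathrm{Res}_{\mc{Y}/S}(\mc{X})$ reducing to (the base change of) $[f]$ lifts to an $A'$-point. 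Such an $A$-point is a section $f_A\colon \mc{Y}_A\to\mc{X}_A$ of $p_A$ lifting $f$. Since smoothness of a point of a locally finitely presented algebraic space can be tested by lifting along small extensions of Artinian local rings (equivalently, formal smoothness of the complete local ring, or unobstructedness of the deformation functor), this reduces the lemma to a deformation-theoretic statement about sections.

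First, replace $\mc{X}$ by the open neighborhood $\mc{X}^\circ$ of $f(\mc{Y}_k)$ on which $p$ is smooth. Since $\mc{Y}_A$ and $\mc{Y}_{A'}$ have the same underlying topological space as $\mc{Y}_k$, any section lifting $f$ automatically factors through $\mc{X}^\circ$. Hence we may assume $p$ is smooth. The problem is then the classical one of extending a section $f_A$ of the smooth morphism $\mc{X}_{A'}\to\mc{Y}_{A'}$ across the square-zero thickening $\mc{Y}_A\hookrightarrow\mc{Y}_{A'}$, which is defined by the ideal $I\otimes\mc{O}_{\mc{Y}_{A'}}\cong I\otimes_k\mc{O}_{\mc{Y}_k}$. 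This uses flatness of $\mc{Y}$ over $S$ and the fact that $I$ is killed by $\mathfrak{m}_{A'}$.

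Next we carry out the local-to-global argument. Because $p$ is smooth, étale-locally on $\mc{Y}_{A'}$ (or Zariski-locally after passing to an étale atlas of the algebraic space $\mc{X}$) lifts of $f_A$ exist. The set of lifts over each open forms a torsor under
\[
\mathcal{H}om(f^*\Omega_{\mc{X}/\mc{Y}},\, I\otimes_k\mc{O}_{\mc{Y}_k}) = f^*\mc{T}_{\mc{X}/\mc{Y}}\otimes_k I.
\]
The obstruction to a global lift is therefore a class in $H^1(\mc{Y}_k, f^*\mc{T}_{\mc{X}/\mc{Y}})\otimes_k I$, using étale/Zariski cohomology comparison for quasi-coherent sheaves. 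This group vanishes by hypothesis, so a global lift $f_{A'}$ exists. Finally, we check that this lift defines the desired $A'$-point of the Weil restriction, compatibly with the given map to $S$.

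The main obstacle is bookkeeping rather than ideas. We need to make sure the torsor structure is correct when $\mc{X}$ is only an algebraic space, which we handle by working in the étale topology of $\mc{Y}_{A'}$ and using that smooth morphisms of algebraic spaces satisfy the infinitesimal lifting property étale-locally. We also need to justify that lifting for all small extensions suffices for smoothness of the algebraic space $\mathrm{Res}_{\mc{Y}/S}(\mc{X})$ at $[f]$ relative to $S$. We plan to cite the standard criterion that a morphism locally of finite presentation is smooth at a point if and only if it is formally smooth for Artinian thickenings at that point. As an alternative route, we would cite the tangent–obstruction theory for Hom-schemes in \cite[Theorem 5.2]{AraujoKollar2003}.
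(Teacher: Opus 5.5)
Your proposal is correct and follows essentially the same route as the paper: both show that $\mathrm{Res}_{\mc{Y}/S}(\mc{X})$ is formally smooth at $[f]$ because the obstruction to lifting a section across a square-zero thickening with ideal $I$ lies in $H^1(\mc{Y}_k, f^*\mathcal{T}_{\mc{X}/\mc{Y}})\otimes_k I=0$, and then use that the Weil restriction is locally of finite presentation to conclude smoothness. The only difference is that the paper cites the cotangent-complex obstruction theory for Hom-stacks (Aoki, Olsson) and uses smoothness of $p$ near $f(\mc{Y}_k)$ to replace $f^*L_{\mc{X}/\mc{Y}}$ by the locally free $f^*\Omega_{\mc{X}/\mc{Y}}$, whereas you first shrink $\mc{X}$ to its smooth locus and derive the same obstruction class by hand from the torsor of \'etale-local lifts.
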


\begin{proof}
This follows from the standard obstruction theory for the Hom-stack.
Indeed, the obstruction to lifting $f$ across a square-zero thickening
with ideal $I$ lies in $\operatorname{Ext}^1_{\mc{Y}_k}
        \bigl(f^*L_{\mc{X}/\mc{Y}},\mc{O}_{\mc{Y}_k}\otimes_k I\bigr)$; see \cite{aoki2006hom, aoki2006hom-erratum} or \cite[Theorem 1.5]{olsson2006deformation}. Since $p$ is smooth in an open neighborhood of $f(\mc{Y}_k)$, the pullback $f^*L_{\mc{X}/\mc{Y}}$ is represented by $f^*\Omega_{\mc{X}/\mc{Y}}$ in degree zero; see \cite[Lemmas 08R5 and 08VE]{stacks-project}.
        Moreover, $f^*\Omega_{\mc{X}/\mc{Y}}$ is finite locally free; see \cite[Lemma 0CK5]{stacks-project}. Hence
\[
\begin{aligned}
        \operatorname{Ext}^1_{\mc{Y}_k}
        (f^*L_{\mc{X}/\mc{Y}},\mc{O}_{\mc{Y}_k}\otimes_k I)
        &\cong\operatorname{Ext}^1_{\mc{Y}_k}(f^*\Omega_{\mc{X}/\mc{Y}},\mc{O}_{\mc{Y}_k}\otimes_k I)  \\
        &\cong H^1(\mc{Y}_k,\mathcal Hom(f^*\Omega_{\mc{X}/\mc{Y}},\mc{O}_{\mc{Y}_k}\otimes_k I))\\
        &\cong H^1(\mc{Y}_k,f^*\mathcal T_{\mc{X}/\mc{Y}}\otimes_k I)\\
        &\cong H^1(\mc{Y}_k,f^*\mathcal T_{\mc{X}/\mc{Y}})\otimes_k I\\ 
        &\cong 0,
\end{aligned}
\]
where the second isomorphism uses that $f^*\Omega_{\mc{X}/\mc{Y}}$ is locally free, and the last isomorphism uses the assumption. Therefore
$\operatorname{Res}_{\mc{Y}/S}(\mc{X})$ is formally smooth at $[f]$. Since the
Weil restriction is locally of finite presentation over $S$, formal
smoothness at $[f]$ implies smoothness at $[f]$; see
\cite[Lemma 04AM]{stacks-project}. Hence $[f]$ lies in the smooth locus
of $\operatorname{Res}_{\mc{Y}/S}(\mc{X})$.
\end{proof}

\begin{prop}\label{prop_principal_Weil}
Let $S$ be a scheme, let $C\to S$ be a smooth projective morphism with geometrically connected fibers of dimension $1$, let $X$ be a smooth proper $S$-algebraic space, and let $p\colon X\to C$ be an $S$-morphism. Let $k$ be a field endowed with a morphism $\operatorname{Spec} k \to S$, such that $X_k$ is a scheme and such that the morphism $p_k  \colon X_k \to C_k$ is projective and there exists a nonempty open subscheme $C_k^\circ\subset C_k$ such that $p_k^{-1}(C_k^\circ)\to C_k^\circ$ is smooth with separably rationally connected fibers. Let $Z \subset C_k$ be a finite subscheme of $C_k$. Assume that there exists a $k$-point of $\mathrm{Res}_{C/S}(X)$ and denote by $s \colon C_k \to X_k$ the corresponding section.

\begin{enumerate}
    \item There exists a finite family $\{k_i/k\}_{i\in I}$ of finite separable extensions of collectively coprime degrees such that for each $i\in I$ there exists a smooth $k_i$-point of $\mathrm{Res}_{C/S}(X)$ whose associated section $s_i \colon C_{k_i} \to X_{k_i}$ coincides with $s_{k_i}$ on $Z_{k_i}$.
    \item Assume further that the field $k$ is large. Then there exists a smooth $k$-point of $\mathrm{Res}_{C/S}(X)$ whose associated section $s' \colon C_{k}\to X_{k}$ coincides with  $s$ on $Z$.
\end{enumerate}
\end{prop}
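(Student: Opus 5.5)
Over an infinite field we would apply \Cref{thm:add_enough_teeth} to the fibration $p_k\colon X_k\to C_k$ with handle $D=C_k$, the section $s$ and the finite subscheme $Z$. For $N$ large, this produces a comb $\mathcal{C}$ with handle $C_k$ and vertical almost very free teeth $C_1,\dots,C_{m_N}$ attached at points of $C_k^\circ\setminus Z$, together with a morphism $f\colon\mathcal{C}\to X_k$ smoothable fixing $f(Z)$ and satisfying $f|_{C_k}=s$.

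\emph{Step 1: choosing $N$.} Set $\mathcal{E}\coloneqq f^*\mathcal{T}_{X_k/C_k}$ on $\mathcal{C}$; it is locally free near $\mathcal{C}$ by \Cref{smooth_along_section} applied to $s$, and along the teeth $p_k$ is smooth since $d_i\in C_k^\circ$. On each tooth $C_i$, \Cref{vf_avf} and \eqref{exact_sequence_tangent_pullback} give $\mathcal{E}|_{C_i}\cong f_{d_i}^*\mathcal{T}_{X_{d_i}}$, which is ample because $f_{d_i}$ is very free. Hence $H^1(C_i,\mathcal{E}|_{C_i}(-x_i))=0$. The standard sequence $0\to\bigoplus_i\mathcal{E}|_{C_i}(-x_i)\to\mathcal{E}\to\mathcal{E}|_{C_k}\to0$ does not suffice on its own: we need the teeth to contribute positivity to the handle. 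We would argue as in Graber--Harris--Starr. After smoothing, the general fiber $Y_t$ maps isomorphically onto $C_{k(t)}$ by \Cref{continuity_smoothing}, so it gives a section $s_t$. The bundle $s_t^*\mathcal{T}_{X/C}$ degenerates to $\mathcal{E}$, and since the teeth are ample, its degree grows linearly in the number of teeth.

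\emph{Step 2: upper semicontinuity.} We would show $H^1(\mathcal{C},\mathcal{E})=0$ for $N$ large. Then by semicontinuity in the flat family $Y\to T$ (pulling back $\mathcal{T}_{X_k/C_k}$ along $F$), we get $H^1(C_{k(t)},s_t^*\mathcal{T}_{X/C})=0$ for $t$ in a dense open $U\subset T$. To get the vanishing on the comb, the cleanest route is to refine the comb construction: \Cref{existence_comb_smoothable_intermediaire} already kills $H^1$ of the normal bundle twisted by any $M$. Via the splitting $\mathcal{T}_W=\mathcal{T}_{X}\oplus\dots$ and the sequence relating $\mathcal{T}_{X/C}$ to $\mathcal{N}_{\mathcal{C}/W}$, we expect $H^1(C_k,\mathcal{E}|_{C_k}(\textstyle\sum x_i))=0$ once enough teeth are attached. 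This is the same elementary-modification mechanism, applied now with the teeth directions lying in $\mathcal{T}_{X/C}$. This is the step I expect to be the main obstacle. If the direct argument fails, I would first redo the proof of \Cref{existence_comb_smoothable_intermediaire} with $\mathcal{N}$ replaced by the vertical tangent bundle, using \Cref{avf_passing_Z}(1) to supply teeth in prescribed vertical directions.

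\emph{Step 3: conclusion.} For $t\in U$ the section $s_t$ satisfies $H^1=0$, and $p$ is smooth near $s_t(C_{k(t)})$ because smoothness is open along the family. So $[s_t]$ is a smooth $k(t)$-point of $\mathrm{Res}_{C/S}(X)$ by \Cref{smoothness_weil}, and it agrees with $s$ on $Z$ because the smoothing fixes $f(Z)$. If $k$ is large, $T$ is a smooth curve with the $k$-point $t_0$, so $T(k)$ is dense and we may pick $t\in U(k)$, giving (2). For (1), if $k$ is infinite we take closed points $t\in U$ with separable residue fields; by \cite[Proposition~6.1]{Gille2021} their degrees are collectively coprime, since $T$ has a $k$-point. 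If $k$ is finite, we run the argument over two towers of extensions of coprime degrees, say $\ell$-power and $\ell'$-power unramified extensions, each of which has an infinite union. By a limit argument each tower produces a point over a finite layer, and the two layers have coprime degrees.
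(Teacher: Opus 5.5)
There is a genuine gap in Step~2, and this step carries the content of the proposition. Your stated target, $H^1(\mathcal{C},\mathcal{E})=0$ for $N$ large, cannot be achieved. The sequence you wrote, together with $H^1(C_i,\mathcal{E}|_{C_i}(-x_i))=0$, gives $H^1(\mathcal{C},\mathcal{E})\cong H^1(C_k,s^*\mathcal{T}_{X/C})$, which does not depend on the teeth. So if $H^1(C_k,s^*\mathcal{T}_{X/C})\neq 0$, no comb makes $H^1(\mathcal{C},\mathcal{E})$ vanish, and semicontinuity for the untwisted bundle yields nothing. The degree count in Step~1 does not help, since degree does not control $H^1$ of a bundle of rank $>1$.

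Your fallback condition $H^1(C_k,\mathcal{E}|_{C_k}(\sum_i x_i))=0$ is the right one. It is not an obstacle, though, and needs no reworking of \Cref{existence_comb_smoothable_intermediaire}. The bundle $\mathcal{E}|_{C_k}=s^*\mathcal{T}_{X/C}$ is fixed, so \Cref{Serre_ampleness} gives $N$ with $H^1(C_k,M\otimes s^*\mathcal{T}_{X/C})=0$ whenever $\deg M\geqslant N$. One simply fixes this $N$ before invoking \Cref{thm:add_enough_teeth}, since $\deg\mathcal{O}_{C_k}(\sum_i x_i)\geqslant m_N\geqslant N$.

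What you are missing is the mechanism that transfers this twisted vanishing to the general fibre. That is Koll\'ar's \cite[II, Lemma~7.10.1]{kollar1996rational}, which the paper uses; here is how it works in this setting.
\begin{enumerate}
\item Near each node the total space $Y$ is $xy=tu$ with $u$ a unit (proof of \Cref{prop:transverse-curve}). So $Y$ is regular along $\mathcal{C}$ and the teeth are Cartier divisors, with $\mathcal{O}_Y(C_i)|_{C_i}\cong\mathcal{O}_{C_i}(-x_i)$ and $\mathcal{O}_Y(C_i)|_D\cong\mathcal{O}_D(x_i)$.
\item The twist $F^*\mathcal{T}_{X/C}(\sum_i C_i)$ restricts to $s^*\mathcal{T}_{X/C}(\sum_i x_i)$ on the handle.
\item On each tooth it restricts to $\mathcal{E}|_{C_i}(-x_i)$. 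Its further twist by $-x_i$ has vanishing $H^1$ precisely because $\mathcal{E}|_{C_i}$ is ample.
\item On $Y_t$ for $t\neq t_0$ it restricts to $F_t^*\mathcal{T}_{X/C}$.
\end{enumerate}
Hence its $H^1$ vanishes on the central fibre, and semicontinuity gives $H^1(Y_t,F_t^*\mathcal{T}_{X/C})=0$ for general $t$.

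The rest of your outline is essentially fine, with two small corrections.
\begin{enumerate}
\item \Cref{smoothness_weil} needs the $S$-morphism $p$, not just $p_k$, to be smooth near the section. The paper deduces this from smoothness of $p_k$ via the fibrewise flatness criterion.
\item For infinite $k$, coprimality of degrees is not what \cite[Proposition~6.1]{Gille2021} gives. The paper instead uses that $T\setminus\{t_0\}$ carries a zero-cycle of degree $1$ (as $T(k)\neq\emptyset$), made separable by \cite[Theorem~9.2]{gabber2013index}.
\end{enumerate}

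Your two-tower argument for finite $k$ is a valid alternative to the paper's. Over $k_{\ell^\infty}$ every finite extension has degree prime to $\ell$, so the descended layers have degrees over $k$ whose gcd is a power of $\ell$; likewise for $\ell'$, so together the gcd is $1$. Your route needs only the infinite-field case of (1). The paper instead applies (2) over the $p$-closures $k^{(p)}$, which are large by Colliot-Th\'el\`ene's observation \cite{pop2014little}.
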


\begin{proof} 
Suppose first that $k$ is infinite. Let $X^{\mathrm{sm}}\subset X$ be the smooth locus of $p$, and let $t\in S$ be the image of $\operatorname{Spec} k\to S$. By \Cref{smooth_along_section}, the morphism $p_k$ is smooth along $s(C_k)$. We have $X_k=X_t\times_{\kappa(t)}k$. Since $k/\kappa(t)$ is faithfully flat and the formation of the smooth locus commutes with flat base change, the morphism $p_t\colon X_t\to C_t$ is smooth at the images in $X_t$ of the points of $s(C_k)$; see \cite[Lemma~0DZI]{stacks-project}. In particular, $p_t$ is flat at these points. Since both $X\to S$ and $C\to S$ are smooth, they are flat, and the fiberwise criterion for flatness applied to $X\to C\to S$ shows that $p$ is flat along the image of $s(C_k)$ in $X$; see \cite[Theorem~05X0]{stacks-project}. Finally, $p$ is locally of finite presentation, so \cite[Lemmas~01V9 and 03ZF]{stacks-project} show that $p$ is smooth along $s(C_k)$. Moreover, $p$ is smooth at every point of $p_k^{-1}(C_k^\circ)$. Thus $s(C_k)\cup p_k^{-1}(C_k^\circ)\subset X_k^{\mathrm{sm}}$. In particular, $s^*\mathcal{T}_{X^{\mathrm{sm}}/C}=s^*\mathcal{T}_{X/C}$ is a vector bundle on $C_k$. 

By \Cref{Serre_ampleness}, there exists a positive integer $N$ such that $H^1(C_k, M \otimes s^*\mathcal{T}_{X/C})=0$ for every line bundle $M$ with $\mathrm{deg} M \geqslant N$. By \Cref{thm:add_enough_teeth}, applied with the open subscheme $C_k^\circ\subset C_k$, there exists a comb $\mathcal{C}$ with handle $D \coloneqq  C_k$ and $m' \geqslant N$ teeth $\{C'_i\}_{1 \leqslant i \leqslant m'}$, together with a map $f \colon \mathcal{C} \to X_k$ with $f|_{C_k}=s$ and $f|_{C'_i}$ almost very free with $p \circ (f|_{C'_i})$ constant and a smoothing of $f$ fixing $f(Z)$ as in \Cref{def:smoothing}. The attachment points of the teeth belong to $C_k^\circ$, hence $f(\mathcal C)\subset X_k^{\mathrm{sm}}$. Since $Y\to T$ is projective, after shrinking $T$ around $t_0$ we may assume that $F(Y)\subset X_k^{\mathrm{sm}}$. Moreover, $(f|_{C'_i})^*\mathcal{T}_{X/C}=(f|_{C'_i})^*\mathcal{T}_{X^{\mathrm{sm}}/C}$ is ample by \Cref{vf_avf}.
By the choice of $m'$ and \cite[Lemma 7.10.1]{kollar1996rational}, we have $H^1(Y_t, F_t^*\mathcal{T}_{X/C}) = 0$ for a general point $t \in T$. By \Cref{continuity_smoothing}, shrinking $T$ again, we may assume that for every $t \in T \setminus \{t_0\}$, the fiber $Y_t$ is isomorphic to $C_{k(t)}$ and $F_t$ defines a section $C_{k(t)} \to X_{k(t)}$ of $p_{k(t)} \colon X_{k(t)} \to C_{k(t)}$. Moreover, since the smoothing fixes $Z$, we have ${F_t}|_{Z_{k(t)}} = {s_{k(t)}}|_{Z_{k(t)}}$.

Since $T(k) \neq \emptyset$, the smooth curve $T \setminus \{t_0\}$ contains a zero-cycle of degree $1$, and hence there exists a finite set $I$ and closed points $\{t_i\}_{i\in I}$ whose residue fields $k_i \coloneqq k(t_i)$ have degrees over $k$ that are coprime. By \cite[Theorem~9.2]{gabber2013index}, we may choose the points $t_i$ such that each extension $k_i/k$ is separable. If $k$ is large, we may choose $I=\{i\}$ to be a singleton and let $t_i\in T \setminus \{t_0\}$ be a $k$-point. By \Cref{continuity_smoothing}, each $Y_{t_i}$ is isomorphic to $C_{k_i}$, and $F_{t_i}$ defines a section $C_{k_i} \to X_{k_i}$ which coincides with $s_{k_i}$ when restricted to $Z_{k_i}$. The vanishing $H^1(C_{k_i}, F_{t_i}^*\mathcal{T}_{X/C}) = 0$ implies, by \Cref{smoothness_weil}, that the corresponding $k_i$-point of $\mathrm{Res}_{C/S}(X)$ is smooth, which concludes the proof of \Cref{prop_principal_Weil} in the case when $k$ is infinite.

It remains to prove (1) when $k$ is finite. By an observation of Colliot-Thélène \cite[Theorem 1.3]{pop2014little}, for every prime $p$, the $p$-closure $k^{(p)}$ of $k$ is a large field. Now (2) applied to $k^{(p)}$ implies the existence of a finite extension $l_p/k$ of degree prime to $p$ and of a smooth $l_p$-point of $\mathrm{Res}_{C/S}(X)$. Since $k$ is finite, it is perfect, and hence the finite extension $l_p/k$ is separable. Thus (1) holds for $k$, as desired.
\end{proof}

\subsection{Proof of Theorem \ref{intro:cor_prop_principal_Weil}}

We will need the following lemma.

\begin{lemma}\label{henselian_lemma}
Let $(R,\mathfrak{m})$ be a Henselian local ring with residue field $\kappa$. Then for every finite extension $\kappa \to \kappa'$, there exists a Henselian local ring $(R',\mathfrak{m}')$ and a finite local homomorphism $f_R \colon R \to R'$ such that $\mathfrak{m}R' = \mathfrak{m}'$ and the induced map on residue fields
$f_R \otimes_R \kappa \colon  \kappa \to \kappa'$
is the given extension. Moreover, if $\kappa \to \kappa'$ is separable, we can take $R \to R'$ to be finite \'etale.
\end{lemma}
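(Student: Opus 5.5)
The plan is to build $R'$ by lifting a generating polynomial of $\kappa'/\kappa$ and to use that $R$ is Henselian to see that the resulting finite $R$-algebra is local. I first treat a simple extension $\kappa'=\kappa(\alpha)$, and then handle the general case by induction on the number of generators.

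\emph{Simple extension.} Let $\bar g\in\kappa[x]$ be the minimal polynomial of $\alpha$, and choose a monic lift $g\in R[x]$ of $\bar g$. Set $R'\coloneqq R[x]/(g)$. This is finite and free over $R$ of rank $\deg g$, and
\[
R'/\mathfrak{m}R'=\kappa[x]/(\bar g)=\kappa'
\]
is a field. Every maximal ideal of $R'$ contains $\mathfrak{m}R'$, because $R'$ is finite over the local ring $R$ (going up, or Nakayama). Hence $\mathfrak{m}R'$ is the unique maximal ideal, and $R'$ is local with $\mathfrak{m}'=\mathfrak{m}R'$. The homomorphism $R\to R'$ is finite and local, and it induces the given extension $\kappa\to\kappa'$ on residue fields. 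Finally, $R'$ is Henselian because a finite local algebra over a Henselian local ring is Henselian (Stacks, Tag 04GH).

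\emph{General extension.} Write $\kappa'=\kappa(\alpha_1,\dots,\alpha_n)$ and iterate the construction, applying the simple case successively to $\kappa(\alpha_1,\dots,\alpha_{j})\subset\kappa(\alpha_1,\dots,\alpha_{j+1})$ over the Henselian local ring obtained at the previous step. At each step the properties are preserved under composition:
\begin{itemize}
\item finiteness of $R\to R'$;
\item the condition $\mathfrak{m}R'=\mathfrak{m}'$, since $\mathfrak{m}_{j+1}=\mathfrak{m}_jR_{j+1}=\mathfrak{m}R_{j+1}$;
\item Henselianity.
\end{itemize}

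\emph{Separable case.} By the primitive element theorem, $\kappa'=\kappa(\alpha)$ with $\bar g$ separable, so the simple case suffices. Then $R[x]/(g)$ is finite free over $R$, and it is étale because $g'$ is a unit in $R'$. Indeed, the image of $g'$ in $R'/\mathfrak{m}'=\kappa'$ is $\bar g'(\alpha)\neq0$, and an element of a local ring whose residue is nonzero is a unit. Hence $R\to R'$ is standard étale and finite, i.e. finite étale.

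The only point needing care is the locality and Henselianity of $R'$, which the citation above handles; everything else is routine.
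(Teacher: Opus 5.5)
Your proof is correct, but it is organized differently from the paper's.

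\textbf{The paper's route.} It first factors $\kappa'/\kappa$ as a separable extension followed by a purely inseparable one. The separable part is handled by quoting the Henselian equivalence between finite \'etale $R$-algebras and finite \'etale $\kappa$-algebras (Stacks, Tag 04GK). The purely inseparable part is handled by successively adjoining roots of $X^p-a$, where $a\in R$ lifts an element of $\kappa$ that is not a $p$-th power. In both cases, Henselianity of $R'$ is deduced, as in your argument, from $R'$ being a finite local $R$-algebra.

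\textbf{Your route.} You use a single construction for every case: lift the minimal polynomial of a generator to a monic $g\in R[x]$, take $R[x]/(g)$, and iterate over a finite set of generators. What this buys you:
\begin{itemize}
\item It is uniform and more self-contained, with no separable/inseparable splitting.
\item It makes explicit why $R'$ is local with $\mathfrak{m}'=\mathfrak{m}R'$. The reason is that $R'/\mathfrak{m}R'$ is a field and every maximal ideal of $R'$ lies over $\mathfrak{m}$, a step that does not even use that $R$ is Henselian. The paper leaves this point implicit.
\item It shows that $R'$ is finite free over $R$.
\item In the separable case it checks standard \'etaleness directly, via the primitive element theorem and the unit $g'$, rather than invoking the equivalence of categories.
\end{itemize}

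The paper's version is shorter because it outsources the separable case. The equivalence of categories also yields uniqueness of the lift, but the lemma does not need that.
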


\begin{proof}
As a finite local ring over a Henselian ring, $R'$ is again Henselian. 
Moreover, since $\kappa \to \kappa'$ is the composition of a separable extension and a purely inseparable extension, 
it suffices to treat these two cases separately.
If $\kappa \to \kappa'$ is separable, the statement follows directly from \cite[Lemma 04GK]{stacks-project}. 
If $\kappa \to \kappa'$ is a finite succession of extensions of the form $(\kappa[X]/(X^p - a))/\kappa$, 
we can reduce to the case where $\kappa' = \kappa[X]/(X^p - \overline{a})$ with $\overline{a} \in \kappa \setminus \kappa^p$. 
Let $a \in R$ be a lift of $\overline{a}$. Then 
$R' = R[X]/(X^p - a)$
satisfies the required condition.
\end{proof}

The following theorem implies \Cref{intro:cor_prop_principal_Weil}.

\begin{thm}\label{cor_prop_principal_Weil}
Let $R$ be a Henselian local ring with residue field $\kappa$. Let
$C \to \Spec R$ be a smooth projective morphism whose fibers are
geometrically connected of dimension $1$, and let $Z \subset C_{\kappa}$
be a subscheme finite over $\kappa$. Let $X$ be an $R$-algebraic space
such that $X_{\kappa}$ is a scheme, and assume that $X$ is smooth and proper over $R$. Let $p\colon X\to C$ be an $R$-morphism such that $p_\kappa$ is projective and admits a rational section. Denote by $s\colon C_\kappa\to X_\kappa$ its unique extension to a section. Assume moreover that there exists a nonempty open subscheme $C_\kappa^\circ\subset C_\kappa$ such that $p_\kappa^{-1}(C_\kappa^\circ)\to C_\kappa^\circ$ is smooth and has separably rationally connected fibers.
\begin{enumerate}
    \item There exist finite \'etale Henselian local $R$-algebras
$\{R_i\}_{i\in I}$ of collectively coprime degrees such that, for each $i\in I$, the base change $X_{R_i} \to C_{R_i}$ has a section $s_i$ such that $s_i|_{Z_{\kappa_i}} = s_{\kappa_i}|_{Z_{\kappa_i}}$, where $\kappa_i$ is the residue field of $R_i$.
    \item Assume further that the field $\kappa$ is large. Then there exists a section $s'$ of $p \colon X \to C$ such that ${s'}_{|Z}=s_{|Z}$.
\end{enumerate}
\end{thm}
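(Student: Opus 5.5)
The plan is to deduce the theorem from \Cref{prop_principal_Weil}, applied with $S=\Spec R$ and the point $\Spec\kappa\to S$, followed by a Hensel lifting on the Weil restriction $\mathrm{Res}_{C/R}(X)$. This Weil restriction is an algebraic space locally of finite type over $R$, representable by the discussion preceding \Cref{smoothness_weil}.

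\textbf{Step 1: the section $s$ is a $\kappa$-point.} Since $X$ is proper over $R$, the morphism $p$ is proper, and so is $p_\kappa$. The curve $C_\kappa$ is a regular one-dimensional scheme. By the valuative criterion, a rational section of $p_\kappa$ extends uniquely to a section $s\colon C_\kappa\to X_\kappa$. This section is a $\kappa$-point of $\mathrm{Res}_{C/R}(X)$.

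\textbf{Step 2: smooth points over the residue field.} The remaining hypotheses of \Cref{prop_principal_Weil} hold: $p_\kappa$ is projective, $X_\kappa$ is a scheme, and $p_\kappa^{-1}(C_\kappa^\circ)\to C_\kappa^\circ$ is smooth with separably rationally connected fibers. The proposition then gives one of two outputs.
\begin{enumerate}
\item In general, finite separable extensions $\kappa_i/\kappa$ of collectively coprime degrees, together with smooth $\kappa_i$-points $[s_i^0]$ of $\mathrm{Res}_{C/R}(X)$ such that $s_i^0|_{Z_{\kappa_i}}=s_{\kappa_i}|_{Z_{\kappa_i}}$.
\item When $\kappa$ is large, a single smooth $\kappa$-point $[s'_0]$ with $s'_0|_Z=s|_Z$.
\end{enumerate}

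\textbf{Step 3: passing to finite étale extensions of $R$.} By \Cref{henselian_lemma}, each $\kappa_i$ is the residue field of a finite étale Henselian local $R$-algebra $R_i$ with $[R_i:R]=[\kappa_i:\kappa]$. Hence the degrees of the $R_i$ are collectively coprime. Formation of the Weil restriction commutes with base change, so
\[
\mathrm{Res}_{C_{R_i}/R_i}(X_{R_i})=\mathrm{Res}_{C/R}(X)\times_R R_i,
\]
and $[s_i^0]$ is a $\kappa_i$-point of this space. Smoothness of a point is preserved under this base change. In case (2) we simply take $R_i=R$.

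\textbf{Step 4: Hensel lifting.} Let $U$ be an open neighbourhood of $[s_i^0]$ in $\mathrm{Res}_{C_{R_i}/R_i}(X_{R_i})$ that is smooth over $R_i$. Since $R_i$ is Henselian, every $\kappa_i$-point of $U$ lifts to an $R_i$-point. For a smooth scheme this is the standard Henselian lifting, for instance \cite[Proposition~6.1]{Gille2021}-type statements or EGA IV 18.5.17. For a smooth algebraic space, first choose an étale chart by a scheme that has a $\kappa_i$-rational point over $[s_i^0]$. Such a chart exists because algebraic spaces admit étale neighbourhoods with trivial residue field extension at a point. Then lift in the chart.

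The resulting $R_i$-point is a section $s_i$ of $X_{R_i}\to C_{R_i}$. Its reduction modulo the maximal ideal is $s_i^0$, so
\[
s_i|_{Z_{\kappa_i}}=s_i^0|_{Z_{\kappa_i}}=s_{\kappa_i}|_{Z_{\kappa_i}}.
\]
In case (2) the same argument produces $s'$.

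\textbf{Main obstacle.} I expect Step 4 to be the delicate part: Hensel lifting for smooth algebraic spaces that are only locally of finite type, over an arbitrary (possibly non-Noetherian) Henselian local ring. One has to produce an étale scheme chart with trivial residue field extension at the given point, and check that smoothness is preserved. Everything else is formal once \Cref{prop_principal_Weil} is available.
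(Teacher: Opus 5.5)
Your proposal is correct and follows the paper's proof essentially step by step. You extend the rational section, apply \Cref{prop_principal_Weil}, realize each $\kappa_i$ as the residue field of a finite \'etale local $R_i$ via \Cref{henselian_lemma}, and Hensel-lift the smooth $\kappa_i$-point of $\mathrm{Res}_{C/R}(X)$ to an $R_i$-point. The only difference is that the paper cites Hensel's lemma for algebraic spaces directly \cite[Theorem A.11]{Rydh2010Submersions}, while you sketch it via an \'etale scheme chart with trivial residue field extension. That sketch is legitimate here because the Weil restriction is quasi-separated, hence decent; such charts need not exist for arbitrary algebraic spaces, so your blanket claim to that effect should be qualified.
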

\begin{proof}
The rational section extends uniquely to a section $s\colon C_\kappa\to X_\kappa$, since $C_\kappa$ is a smooth curve and $p_\kappa$ is proper.
(1) We apply \Cref{prop_principal_Weil}(1) to $s \colon C_{\kappa} \to X_\kappa$ and $Z \subset C_{\kappa}$.
This yields a finite collection $\{\kappa_i/\kappa\}_{i\in I}$ of finite extensions of collectively coprime degrees such that, 
for each $i$, there exists a $\kappa_i$-point $x_i \in \mathrm{Res}_{C/R}(X)(\kappa_i)$ which is smooth and whose associated section $s_i \colon  C_{\kappa_i} \to 
X_{\kappa_i}$ is such that ${s_i}_{|Z_{\kappa_i}}={s_{\Spec \kappa_i}}_{|Z_{\kappa_i}}$.
By \Cref{henselian_lemma}, we can lift each extension $\kappa_i/\kappa$ to a local finite \'etale extension 
$R \to R_i$ whose residue field is $\kappa_i$. In particular, $R_i$ is also a Henselian local ring.
As $x_i$ is a smooth $\kappa_i$-point of the algebraic space $\mathrm{Res}_{C/R}(X)$, by Hensel's lemma for algebraic spaces \cite[Theorem A.11]{Rydh2010Submersions} it lifts to an $R_i$-point. 
This defines a section $s_i \colon  C_{R_i} \to X_{R_i}$. The degrees of the finite \'etale $R$-algebras $R_i$ are collectively coprime, since they coincide with the degrees of the extensions $\kappa_i/\kappa$.

(2) The proof is identical to that of (1), using \Cref{prop_principal_Weil}(2) in place of \Cref{prop_principal_Weil}(1).
\end{proof}

\begin{cor}\label{intro:prop_projective_homogeneous_space}
Let $R$ be a Henselian local ring with residue field $\kappa$, let $C$ be a smooth projective $R$-curve with geometrically connected fibers, let $Y \to C$ be a smooth projective morphism such that, for every $c \in C$, the fiber $Y_c$ is a homogeneous space under a smooth affine connected $k(c)$-group, and such that there exists a rational section $C_\kappa \dashrightarrow Y_\kappa$.
\begin{enumerate}
    \item There exists a finite collection of finite \'etale Henselian local $R$-algebras $\{R_i\}_{i \in I}$ of collectively coprime degrees such that, for each $i$, the base change $Y_{R_i} \to C_{R_i}$ admits a section.
    \item If $\kappa$ is a large field, then $Y \to C$ admits a section.
\end{enumerate}
\end{cor}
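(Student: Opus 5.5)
The plan is to deduce the corollary directly from \Cref{cor_prop_principal_Weil} applied with $X\coloneqq Y$, $p\colon Y\to C$ the given morphism, and $Z=\varnothing$ (or any finite subscheme; no interpolation condition is needed). I will verify the hypotheses of that theorem in the following order.

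\textbf{Properness and projectivity.} Since $Y\to C$ is smooth and projective and $C\to\Spec R$ is smooth and projective, $Y$ is a smooth proper $R$-scheme, and $p_\kappa$ is projective. By assumption $p_\kappa$ admits a rational section; its extension to a section $s$ is the one used in \Cref{cor_prop_principal_Weil}. Because $p$ is smooth everywhere, we may take $C_\kappa^\circ=C_\kappa$, so it remains to check that every fiber $Y_c$, for $c\in C_\kappa$, is separably rationally connected.

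\textbf{Separable rational connectedness of the fibers.} Let $c\in C_\kappa$ and let $k=k(c)$. By hypothesis, $Y_c$ is a homogeneous space under a smooth affine connected $k$-group $H$. Since $Y_c$ is projective and homogeneous, it is geometrically integral; here geometric connectedness comes from the connectedness of $H$ and the transitivity of the action. By \Cref{src_invariant_base_change} we may pass to $\cl{k}$. Then $(Y_c)_{\cl k}$ has a $\cl k$-point $y$, and the orbit map $H_{\cl k}\to (Y_c)_{\cl k}$, $h\mapsto h\cdot y$, is surjective.

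\textbf{Main obstacle.} The key step is to show that the orbit map is generically smooth. This is automatic when the stabilizer is smooth, but in positive characteristic the stabilizer $\on{Stab}(y)$ may be nonreduced. I would argue as follows. The scheme $(Y_c)_{\cl k}$ is smooth, being a fiber of the smooth morphism $Y\to C$, and the orbit map identifies it with the fppf quotient $H_{\cl k}/\on{Stab}(y)$. Its reduced version $H_{\cl k}/\on{Stab}(y)_{\mathrm{red}}\to (Y_c)_{\cl k}$ is a purely inseparable finite map between smooth projective homogeneous varieties, which are flag varieties $G/P$ with $P$ parabolic. If this does not suffice, I would use instead that $(Y_c)_{\cl k}$, being smooth projective and homogeneous under the reductive quotient, is isomorphic to some $G/P_{\mathrm{red}}$ through a different presentation; every smooth projective homogeneous variety of a connected linear group is a flag variety of a reduced parabolic. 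The Bruhat big cell then gives an open immersion $\mathbb{A}^n\hookrightarrow G/P$, so the variety is rational, hence separably unirational, hence separably rationally connected by \Cref{unirational_src}. Then \Cref{src_invariant_base_change} descends this property back to $Y_c$.

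\textbf{Conclusion.} With all hypotheses verified, \Cref{cor_prop_principal_Weil}(1) produces finite étale Henselian local $R$-algebras $R_i$ of collectively coprime degrees such that each $Y_{R_i}\to C_{R_i}$ admits a section, which is (1). When $\kappa$ is large, \Cref{cor_prop_principal_Weil}(2) gives a section of $Y\to C$ itself, which is (2).
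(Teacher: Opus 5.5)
Your overall route is the paper's. You apply \Cref{cor_prop_principal_Weil} to $X=Y$ with $Z=\varnothing$ and $C_\kappa^\circ=C_\kappa$. Beforehand you check, via \Cref{src_invariant_base_change}, that each geometric fiber is a flag variety $G/P$ with a Bruhat big cell $\mathbb{A}^n$, hence separably rationally connected by \Cref{unirational_src}. The paper simply writes $Y\times_C\Spec\cl{k(c)}\cong G/P$ with $P$ parabolic and cites the Bruhat decomposition. That is exactly what is needed when the stabilizers are smooth, e.g.\ for $Y=E/B$ in \Cref{cor:drinfeld-simpson}.

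The paragraph you label as the main obstacle, however, does not resolve the issue you raise.

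\textbf{First attempt.} The purely inseparable map $H_{\cl k}/\on{Stab}(y)_{\mathrm{red}}\to (Y_c)_{\cl k}$ only shows that the fiber is inseparably unirational. That does not imply separable rational connectedness.

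\textbf{Fallback claim.} You assert that every smooth projective homogeneous variety under a connected linear group is $G/Q$ with $Q$ a reduced parabolic. This is false in characteristic $p>0$. Take the divisor $\{\sum_i\xi_ix_i^p=0\}\subset\mathbb{P}^2\times\mathbb{P}^2$, a variety of unseparated flags. It is homogeneous under $\mathrm{SL}_3$, with $g$ acting on $[x]$ linearly and on $[\xi]$ through the contragredient of its Frobenius twist $g^{(p)}$; its stabilizer is nonreduced. It is a $\mathbb{P}^1$-bundle over the first factor. Its second projection is an extremal contraction whose fibers are $p$-fold lines. That cannot happen on $G/Q$ with $Q$ reduced, where every contraction is a smooth projection $G/Q\to G/Q'$.

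\textbf{Correct fix.} Run the big-cell argument directly with $P$ nonreduced. Let $Q=P_{\mathrm{red}}$, a parabolic, and let $U^-$ be the unipotent radical of the opposite parabolic. Then $U^-P=U^-QP$ is open in $G$, being the image of the open big cell times $P$ under the flat multiplication map. So the $U^-$-orbit of the base point is open in $G/P$ and isomorphic to $U^-/(U^-\cap P)$. A quotient of a split unipotent group by an arbitrary closed subgroup scheme is an affine space. To see this, induct on a central $\mathbb{G}_a$, using that $\mathbb{G}_a$ modulo a finite subgroup scheme is again $\mathbb{G}_a$ and that $\mathbb{G}_a$-torsors over an affine base are trivial. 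Hence $G/P$ is rational, and the rest of your proof goes through unchanged.
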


\begin{proof}
First, since $Y_\kappa$ is projective, the rational section extends to a
section $C_\kappa \to Y_\kappa$.

Next, every fiber of $Y \to C$ is separably rationally connected. Fix
$c \in C$. By \Cref{src_invariant_base_change}, separable rational
connectedness of $Y \times_C \Spec k(c)$ may be checked after base change to an algebraic closure $\overline{k(c)}$ of $k(c)$. Now
$Y \times_C \Spec \overline{k(c)}$ is isomorphic to $G/P$, where $G$ is a
split reductive $\overline{k(c)}$-group and $P \subset G$ a parabolic subgroup. By the Bruhat
decomposition \cite[Exp.~XXVI, 4.3.2\,(iv')]{SGA3}, $G/P$ contains a dense
open subscheme isomorphic to an affine space; it is therefore separably
unirational, hence separably rationally connected by \Cref{unirational_src}.

\begin{enumerate}
    \item We apply \Cref{cor_prop_principal_Weil}(1) to $X = Y$ and $Z = \varnothing$. This yields finite \'etale extensions $\{R_i\}_{i \in I}$ of $R$ of collectively coprime degrees and sections $s_i \colon C_{R_i} \to Y_{R_i}$, as desired.
    \item The proof is identical to that of~(1), using \Cref{cor_prop_principal_Weil}(2) in place of \Cref{cor_prop_principal_Weil}(1). \qedhere
\end{enumerate}
\end{proof}

\begin{proof}[Proof of \Cref{cor:drinfeld-simpson}]
The morphism $(E/B)_\kappa \to C_\kappa$ admits a rational section, and since it is projective, it actually admits a section. Then, by \Cref{intro:prop_projective_homogeneous_space}, there exists a finite collection of finite étale Henselian local $R$-algebras $\{R_i\}_{i\in I}$ of collectively coprime degrees such that $(E/B)_{R_i}\to C_{R_i}$ admits a section for every $i\in I$. Equivalently, $E_{R_i}$ admits a reduction of structure group to $B\times_C C_{R_i}$.
The proof when $\kappa$ is large is the same, using that $E/B \to C$ admits a section.
\end{proof}

\section{Relative compactifications of torsors}\label{sec:6}

\begin{defin}\label{def:equivariant-compactification}
    Let $S$ be a scheme, let $G$ be a smooth finitely presented $S$-group, and let $X$ be a smooth finitely presented algebraic space over $S$ on which $G$ acts over $S$. By definition, a \emph{$G$-equivariant compactification} of $X$ is a smooth proper finitely presented algebraic space $\cl{X}$ over $S$, together with a $G$-action on $\cl{X}$ and a $G$-equivariant fiberwise dense open embedding $X\hookrightarrow \cl{X}$ over $S$.
\end{defin} 

Let $S$ be a scheme, and let $G$ be a smooth finitely presented $S$-group. In this section, under the assumption that the radical torus $\mathrm{rad}(G)$ of $G$ is isotrivial, we construct for every $G$-torsor $E\to S$ a $G$-equivariant compactification $\cl{E}\to S$. 

Consider the $(G \times_S G)$-action on $G$ given by left and right multiplication. By a theorem of Nath \cite[Theorem~1.2]{nath2026compactification}, if $G$ is semisimple, or if $G$ is an isotrivial reductive group, then $G$ admits a $(G \times_S G)$-equivariant compactification $\cl{G}\to S$ such that $\cl{G}$ is projective over $S$.

\begin{lemma}\label{iso_trivial_quasi_split_group}
Let $S$ be a scheme and $G$ a quasi-split reductive $S$-group. 
If the radical torus $\mathrm{rad}(G)$ is isotrivial, then $G$ is isotrivial.
\end{lemma}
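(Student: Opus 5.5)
We want to show that $G$ is isotrivial, i.e.\ that $G$ splits after a finite étale surjective base change $S'\to S$. The plan is to reduce this to an isotriviality statement for a single maximal torus. Since $G$ is quasi-split, it admits a Borel subgroup $B$, and (possibly after passing to a Zariski cover of $S$, on which quasi-splitness means the existence of a Borel together with a compatible pinning datum up to outer twist) a maximal torus $T\subset B$. A quasi-split reductive group splits over any finite étale cover over which a maximal torus $T$ contained in a Borel subgroup splits: over such a cover, $(G,B,T)$ with $T$ split and $B\supset T$ is a reductive group with a split maximal torus, and by \cite[Exp.~XXII, 2.3]{SGA3} it is then split Zariski-locally (the root spaces are line bundles, trivial Zariski-locally). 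So it suffices to prove that $T$ is isotrivial.

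To show that $T$ is isotrivial, we describe it via $\mathrm{rad}(G)$ and the simply connected cover. Recall that the isogeny $G^{\mathrm{sc}}\times_S \mathrm{rad}(G)\to G$ maps $T^{\mathrm{sc}}\times_S \mathrm{rad}(G)$ onto $T$, where $T^{\mathrm{sc}}\subset B^{\mathrm{sc}}\subset G^{\mathrm{sc}}$ is the preimage maximal torus. The character group of $T$ embeds with finite index in that of $T^{\mathrm{sc}}\times\mathrm{rad}(G)$, so isotriviality of $T$ follows from isotriviality of $T^{\mathrm{sc}}$ and $\mathrm{rad}(G)$: a torus is isotrivial iff its character sheaf becomes constant on a finite étale cover, and this property passes to isogenous tori. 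The radical is isotrivial by hypothesis. For $T^{\mathrm{sc}}$: since $G^{\mathrm{sc}}$ is semisimple simply connected and quasi-split with Borel $B^{\mathrm{sc}}$, the character lattice of $T^{\mathrm{sc}}$ has the fundamental weights as a basis, and the Galois/étale monodromy acts by permuting the simple roots, i.e.\ through the finite automorphism group of the Dynkin diagram. Concretely, the Dynkin scheme $\mathrm{Dyn}(G)$ is finite étale over $S$, and $T^{\mathrm{sc}}\cong \mathrm{Res}_{\mathrm{Dyn}(G)/S}\mathbb{G}_m$ (a quasi-trivial torus), which is split by a finite étale cover trivializing $\mathrm{Dyn}(G)$. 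So $T^{\mathrm{sc}}$ is isotrivial.

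The main obstacle is the first step: making precise that a quasi-split group whose maximal torus (in a Borel) is split is split, and handling that quasi-splitness gives $B$ and $T$ only Zariski-locally in SGA3's conventions. I would handle this by noting isotriviality is a statement about the type of $G$: $G$ is determined by the $\mathrm{Isom}$-torsor from a split form, and quasi-split forms are classified by $H^1(S,\mathrm{Out})$ with $\mathrm{Out}$ a twisted-constant group whose twist is controlled by the automorphisms of the based root datum; the torsor is governed by the action on $X^*(T)$ preserving simple roots, so trivializing $X^*(T)$ (finite étale, by the above) trivializes this outer torsor, and then $G$ is split locally for the Zariski topology on the finite étale cover. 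Finally, one needs to check that the finite étale covers involved can be taken surjective and combined by fiber product.
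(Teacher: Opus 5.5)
Your second step is sound and uses the same key input as the paper. For the quasi-split simply connected group, the torus of a Killing couple is quasi-trivial, $\mathrm{Res}_{\mathrm{Dyn}(G)/S}\mathbb{G}_m$ \cite[Exp.~XXIV, Prop.~3.13]{SGA3}. The isogeny $T^{\mathrm{sc}}\times_S \mathrm{rad}(G)\to T$ then makes $T$ isotrivial. The paper runs the isogeny at the level of groups instead: $G^{\mathrm{sc}}$ is isotrivial, hence so is $G^{\mathrm{sc}}\times_S\mathrm{rad}(G)$, hence so is its central quotient $G$. That difference is cosmetic, and it still needs a ``split torus $\Rightarrow$ split group'' step, for $G^{\mathrm{sc}}$ rather than for $G$.

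The gap is in your passage from $T$ back to $G$. As written, you only obtain that $G_{S'}$ is split Zariski-locally on the finite \'etale cover $S'$. Your last paragraph ends with the same weaker conclusion. That is not isotriviality, i.e.\ splitting after a finite \'etale surjective base change, which the paper needs (for instance to pick a Galois cover splitting $G$ in the appendix).

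Moreover, this weaker statement cannot be upgraded using only a Borel subgroup and a torus in it. Take $S=\mathbb{P}^1_{\bar k}$ and $G=\mathrm{PGL}(\mathcal{O}\oplus\mathcal{O}(1))$. This group has a Borel subgroup containing a split maximal torus, and its radical is trivial. Its root spaces are $\mathcal{O}(\pm 1)$. It is not isomorphic to $\mathrm{PGL}_{2,S}$, since forms of $\mathrm{PGL}_2$ are classified by $H^1(S,\mathrm{PGL}_2)$ and $\mathcal{O}\oplus\mathcal{O}(1)$ is not of the form $L\oplus L$. Since $\mathbb{P}^1_{\bar k}$ has no connected finite \'etale covers of degree $>1$, $G$ is not isotrivial.

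What excludes this example is the quasi-pinning built into SGA3's notion of quasi-split \cite[Exp.~XXIV, \S3]{SGA3}. It consists of a global Killing couple $(B,T)$ (not merely Zariski-local, contrary to your remark), together with a nowhere-vanishing section of the line bundle on $\mathrm{Dyn}(G)$ formed by the simple root spaces. Over a finite \'etale cover $S'$ on which $T$ and $\mathrm{Dyn}(G)$ become constant, this data is a pinning. Hence all root spaces are free, and $G_{S'}$ is isomorphic to the Chevalley group globally on $S'$. Equivalently, $G$ is the twist of the pinned split group by a torsor under the constant group of automorphisms of the based root datum, and that torsor is already trivial over any cover on which the based root datum of $(G,B,T)$ becomes constant. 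With this input your argument closes; without it, it only proves local isotriviality.
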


\begin{proof}
Let $\tilde{G}\coloneqq G^{\mathrm{sc}} \times_S \mathrm{rad}(G)$, where $G^{\mathrm{sc}}$ is the simply connected cover of the derived subgroup of $G$. By \cite[XXII 6.2.3]{SGA3}, there exists a central short exact sequence
\[
1 \longrightarrow \mu(G) \longrightarrow \tilde{G} 
\longrightarrow G \longrightarrow 1,
\]
where $\mu(G)$ is a finite flat $S$-group of multiplicative type.
Observe that $G^{\mathrm{sc}}$ is quasi-split: indeed, letting $B\subset G$ be a Borel subgroup of $G$, by \cite[XIV Proposition 4.9]{SGA3} the inverse image $\tilde{B}\subset\tilde{G}$ is a Borel subgroup of $\tilde{G}$, and hence the image of $\tilde{B}$ in $G^{\mathrm{sc}}$ is a Borel subgroup of $G^{\mathrm{sc}}$. It now follows from \cite[XXIV Prop.~3.13]{SGA3} that $G^{\mathrm{sc}}$ contains a maximal torus of the form $\mathrm{Res}_{S'/S}(\mathbb{G}_{m,S'})$ for some finite \'etale cover $S' \to S$, which implies that $G^{\mathrm{sc}}$ is isotrivial. Since $\mathrm{rad}(G)$ is isotrivial by assumption, we conclude that $G^{\mathrm{sc}} \times_S \mathrm{rad}(G)$ is isotrivial. Finally, as $G$ is the quotient of $\tilde{G}$ by $\mu(G)$, it is also isotrivial.
\end{proof}

\begin{prop}\label{compactification_G}
    Let $S$ be a scheme, and let $G$ be a reductive $S$-group whose radical torus
    $\mathrm{rad}(G)$ is isotrivial.

   \begin{enumerate}
       \item There exists a smooth proper $S$-algebraic space $\overline{G}$, endowed with a left and a right action of $G$,
    together with a $G \times_S G$-equivariant open immersion $G \hookrightarrow \overline{G}$ whose image is fiberwise dense.
    \item If $S$ is a smooth projective family of integral curves over a scheme $B$, then we can choose $\overline{G}$ such that for every field $k$ and every map $\Spec k \to B$, $\overline{G}_{k} \to S_k$ is projective.
   \end{enumerate}
\end{prop}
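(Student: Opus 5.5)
We reduce to the quasi-split case, where Nath's theorem applies, and then descend along the torsor of quasi-splittings of $G$ by a contracted-product construction.

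\emph{Setting up the quasi-split form.} Let $G_0$ be the quasi-split inner form of $G$, obtained by twisting the type of $G$ by the outer automorphisms of the Dynkin scheme; since the Dynkin scheme is étale, $G_0$ exists over $S$. Then $G$ is an inner twist of $G_0$. More precisely, let $P\to S$ be the $G_0^{\mathrm{ad}}$-torsor of isomorphisms $G_0\to G$ compatible with the given identification of Dynkin data. Then $G\cong P\wedge^{G_0^{\mathrm{ad}}}G_0$, where $G_0^{\mathrm{ad}}$ acts on $G_0$ by conjugation. The radical of $G_0$ is isomorphic to $\mathrm{rad}(G)$, because inner twisting does not change the radical torus, and hence it is isotrivial. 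By \Cref{iso_trivial_quasi_split_group}, $G_0$ is isotrivial.

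\emph{Compactifying $G_0$ and twisting.} Nath's theorem \cite[Theorem~1.2]{nath2026compactification} gives a projective $(G_0\times_S G_0)$-equivariant compactification $\overline{G_0}$. I need the conjugation action of $G_0^{\mathrm{ad}}$ to extend to $\overline{G_0}$. The center $Z(G_0)$, embedded diagonally, acts trivially on $G_0$, so by fiberwise density it acts trivially on $\overline{G_0}$. Hence the diagonal $G_0$-action on $\overline{G_0}$ factors through $G_0/Z(G_0)=G_0^{\mathrm{ad}}$. I then set
\[\overline{G}\coloneqq P\wedge^{G_0^{\mathrm{ad}}}\overline{G_0}.\]
This is an fppf (in fact étale, since $G_0^{\mathrm{ad}}$ is smooth) quotient and exists as an algebraic space. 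Smoothness, properness and fiberwise density of $G=P\wedge G_0\subset\overline{G}$ can all be checked étale-locally on $S$, where $P$ is trivial and $\overline{G}\cong\overline{G_0}$.

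\emph{Equivariance for part (1).} The left and right $G_0$-actions on $\overline{G_0}$ commute with the diagonal conjugation action in the appropriate twisted sense: the map $\overline{G_0}\times G_0\times G_0\to\overline{G_0}$ is $G_0^{\mathrm{ad}}$-equivariant when $G_0^{\mathrm{ad}}$ acts on every factor by conjugation. This map therefore twists to a $(G\times_S G)$-action on $\overline{G}$ extending left and right multiplication on $G$, which proves (1).

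\emph{Projectivity for part (2).} For $\Spec k\to B$, the base change $P_k$ is a torsor over the curve $S_k$, which is projective over $k$. The twist $P_k\wedge\overline{G_0}_k$ is projective over $S_k$ if $\overline{G_0}$ carries a $G_0^{\mathrm{ad}}$-linearized relatively ample line bundle. To arrange this, take any $S$-ample line bundle $L$ on $\overline{G_0}$, replace it by a power so that it is linearizable for $G_0\times G_0$, and then use the diagonal action together with the fact that $Z(G_0)$ acts trivially on $\overline{G_0}$, so that a further power descends to a $G_0^{\mathrm{ad}}$-linearization. The twisted line bundle is then ample on each $\overline{G}_k$, because ampleness is fppf-local on the base $S_k$.

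\textbf{Main obstacle.} The hardest step is linearizing over a possibly non-normal base $S$. Over a field, or over the normal curve $S_k$ (which is all that part (2) requires), this follows from Sumihiro/Mumford-type results applied to $\overline{G_0}_k$ over the smooth curve $S_k$. I would therefore argue (2) after base change to $S_k$, twisting $P_k$ there, rather than globally over $S$.
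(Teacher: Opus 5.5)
Your part (1) is essentially the paper's argument: the quasi-split inner form, isotriviality via \Cref{iso\_trivial\_quasi\_split\_group}, Nath's compactification, triviality of the diagonal $Z(G_0)$-action by density, and $\overline{G}\coloneqq P\wedge^{G_0^{\mathrm{ad}}}\overline{G_0}$.

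The gap is in (2), at exactly the step you flag. Your argument needs an $S_k$-ample line bundle on $\overline{G_0}\times_S S_k$ carrying a $G_0^{\mathrm{ad}}$-linearization, and neither half of your sketch for producing it is justified.

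First, Mumford's linearization result is stated for a connected group \emph{over a field} acting on a normal variety. Here $G_0\times_S S_k$ is in general a non-constant reductive group scheme over the curve $S_k$. To use results of this type you would have to do one of two things. One option is to pass to a finite \'etale Galois cover $S'_k\to S_k$ splitting $G_0$, where the action becomes one of the constant $k$-group $\mathbf{G}_k$ on the $k$-variety $\overline{G_0}\times_S S'_k$, linearize there, and then descend a bundle such as $\bigotimes_\gamma\gamma^*L$, together with its linearization, back to $S_k$. The other is to quote a precise relative linearization theorem for reductive group schemes over a Dedekind base and check its hypotheses.

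Second, and more seriously, passing from a $(G_0\times G_0)$-linearization to a $G_0^{\mathrm{ad}}$-linearization by ``a further power'' fails whenever $G$ is not semisimple. The diagonal copy of $Z(G_0)$ acts on the fibers of $L$ through a character of $Z(G_0)$. Since $Z(G_0)\supset\mathrm{rad}(G_0)$, this character can have infinite order. For example, take $G_0=\mathbb{G}_m$ acting on $\overline{G_0}=\mathbb{P}^1$ by $(a,b)\cdot x=axb^{-1}$, and linearize $\mathcal{O}(1)$ via $(x_0,x_1)\mapsto(a^ex_0,\,a^{e+1}b^{-1}x_1)$ with $e\neq 0$. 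Then $(a,a)$ acts by the scalar $a^e$, and no power of this linearization has trivial diagonal action. The fix is to retwist the linearization by a character of $G_0$ on one factor, or to linearize the conjugation action of $G_0^{\mathrm{ad}}$ directly. The obstruction is the restriction to $Z(G_0)$ of the character by which the stabilizer $\Delta(G_0)$ of the identity acts on $L$, and this always extends to a character of $G_0$.

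For comparison, the paper proves (2) without any linearization. Over $\bar k$, the function field of $S_{\bar k}$ is $C_1$. By Steinberg's theorem for fields of dimension $\leqslant 1$ (as recorded in Serre's \emph{Galois Cohomology}), the twisting torsor $P_{\bar k}$ is therefore generically trivial. By Grothendieck--Serre (Fedorov--Panin) it is then Zariski-locally trivial. Hence $\overline{G}_{\bar k}$ is a Zariski-local twist of the projective $\overline{G_0}_{\bar k}$, and projectivity is descended from $\bar k$ to $k$.

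Your route would give more if the linearization could be produced over $S$ itself: projectivity of $\overline{G}$ over $S$, with no fiberwise argument. This is what the paper does in the appendix (\Cref{projectivity\_compactification\_G}). There, however, the linearization does not come from Mumford-type results. It comes from Nath's GIT construction: the trivial $(\mathbf{G}\times\mathbf{G})$-linearization of the trivial bundle on $V_{\mathbf{G},\beta}$ commutes with the $(\mathbb{G}_m)^I$-linearization, and its diagonal part automatically factors through $\mathbf{G}_{\mathrm{ad}}$. This is followed by Galois descent of $\bigotimes_{\gamma\in\Gamma}\gamma^*\mathcal{O}(n)$ along a splitting cover.
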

\begin{proof}
(1)  By \cite[XXIV Cor. 3.12]{SGA3}, there exists a quasi-split reductive $S$-group
$G'$ such that $G$ is an inner form of $G'$. Let $E$ be a
$G'_{\mathrm{ad}}$-torsor over $S$ such that
$G = E \times^{G'_{\mathrm{ad}}} G'$, where
$G'_{\mathrm{ad}}$ acts on $G'$ by conjugation. As $\mathrm{rad}(G')$ is central, we have
$\mathrm{rad}(G') \cong \mathrm{rad}(G)$, and so $\mathrm{rad}(G')$
is isotrivial. By \Cref{iso_trivial_quasi_split_group}, $G'$ is isotrivial,
and hence by \cite[Theorem~1.2]{nath2026compactification}, there exists a
$G' \times_S G'$-equivariant compactification $\overline{G'}$
of $G'$.
We note that the conjugation action of $G'$ on $\overline{G'}$
factors through $G'_{\mathrm{ad}}$, since the center acts
trivially on the fiberwise dense open subscheme $G'$. Indeed, consider the
two morphisms
\[
    \sigma \colon Z(G') \times_S \overline{G'}
    \longrightarrow \overline{G'},
    \qquad
    p \colon Z(G') \times_S \overline{G'}
    \longrightarrow \overline{G'},
\]
given respectively by conjugation and by projection. These two morphisms coincide on
the fiberwise dense open subscheme $Z(G') \times_S G'$ of
$Z(G') \times_S \overline{G'}$. Since $\overline{G'}$
is projective, they therefore coincide on the schematic closure of
$Z(G') \times_S G'$ in
$Z(G') \times_S \overline{G'}$. As $Z(G')$ is flat and
quasi-compact over $S$, and since schematic closure commutes with flat quasi-compact base
change, it follows that $\sigma$ and $p$ coincide on all of
$Z(G') \times_S \overline{G'}$.
We can then define
$\overline{G} \coloneqq E \times^{G'_{\mathrm{ad}}} \overline{G'}$.
The open immersion $G \hookrightarrow \overline{G}$ is a twist of the
$G' \times G'$-equivariant open immersion
$G' \hookrightarrow \overline{G'}$ by the
$\mathrm{Aut}(\overline{G'})$-torsor
$E \times^{G'_{\mathrm{ad}}} \mathrm{Aut}(\overline{G'})$.
In particular, $G$ is fiberwise dense in $\overline{G}$, and the
$G' \times G'$-action on $\overline{G'}$ twists into a
$G \times_S G$-action on $\overline{G}$.

(2) Suppose moreover that $S$ is a family of integral curves over $B$, and let $\Spec k \to B$ be a morphism. Let $\overline{k}$ be an algebraic closure of $k$. By \cite[Chap.~III, \S2.2, Proposition~11]{Serre1997GaloisCohomology}, the field $k(S_{\overline{k}})$ has the $C_1$ property. Therefore, by \cite[Chap.~III, \S2.3, Theorem 1' and Remark (1) p.133]{Serre1997GaloisCohomology}, every $(G'_{\mathrm{ad}})_{\overline{k}}$-torsor over $S_{\overline{k}}$ is generically trivial, and therefore, by the Grothendieck--Serre conjecture (see \cite{fedorov2015proof}), it is Zariski-locally trivial. Hence $E_{\overline{k}}$ is Zariski-locally trivial.
It follows that $\overline{G}_{\overline{k}}$ is a smooth projective scheme over $S_{\overline{k}}$, and therefore also a smooth projective scheme over $\overline{k}$. Since projectivity descends under field extensions \cite[Corollaire~6.6.5]{ega2}, we deduce that $\overline{G}_{k}$ is a smooth projective scheme over $k$, and hence projective over $S_k$.
\end{proof}

\begin{cor}\label{compactification_torsor}
Let $S$ be a scheme, let $G$ be a reductive $S$-group whose radical torus
$\mathrm{rad}(G)$ is isotrivial, and let $E$ be a $G$-torsor over $S$.

\begin{enumerate}
\item There exists a smooth proper
$S$-algebraic space $\overline{E}$, endowed with a right action of $G$,
together with a $G$-equivariant open immersion
$E \hookrightarrow \overline{E}$ whose image is fiberwise dense.

\item If $S$ is a smooth projective family of integral curves over a scheme $B$,
then we can choose $\overline{E}$ such that for every field $k$ and every map
$\Spec k \to B$, the morphism
$\overline{E}_{k} \to S_k$ is projective.
\end{enumerate}
\end{cor}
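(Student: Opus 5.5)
The plan is to obtain $\overline{E}$ by twisting the compactification of \Cref{compactification_G} by the torsor $E$. Let $\overline{G}$ be the smooth proper $S$-algebraic space of \Cref{compactification_G}(1), with its $G\times_S G$-action and the equivariant fiberwise dense open immersion $G\hookrightarrow\overline{G}$. We set
\[
\overline{E}\coloneqq E\times^{G}\overline{G},
\]
the contracted product of $E$ with $\overline{G}$, where $G$ acts on $\overline{G}$ through the left action. Concretely, this is the quotient of $E\times_S\overline{G}$ by the free diagonal action $g\cdot(e,x)=(eg^{-1},gx)$. The right action of $G$ on $\overline{G}$ commutes with the left action, so it descends to a right $G$-action on $\overline{E}$. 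Functoriality of the contracted product applied to $G\hookrightarrow\overline{G}$ gives a $G$-equivariant morphism $E=E\times^G G\to\overline{E}$.

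The main obstacle is representability: we must show that $\overline{E}$ is an algebraic space and is smooth and proper over $S$. We argue fppf-locally, and in fact étale-locally, since $G$ is smooth. Choose an étale cover $S'\to S$ trivializing $E$. Then $\overline{E}_{S'}\cong\overline{G}_{S'}$, and under this isomorphism $E_{S'}\to\overline{E}_{S'}$ corresponds to $G_{S'}\hookrightarrow\overline{G}_{S'}$. This identification is compatible with the right actions, because the trivialization only uses the left action. Hence $\overline{E}$ is the descent of $\overline{G}_{S'}$ along the descent datum twisted by the left action of $G$ on $\overline{G}$, which is a cocycle with values in $G(S'\times_S S')$.

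Descent of algebraic spaces along fppf (or étale) covers is always effective. So $\overline{E}$ is an algebraic space, equivalently the quotient sheaf $(E\times_S\overline{G})/G$ is an algebraic space. Smoothness, properness, finite presentation, being an open immersion, and fiberwise density can all be checked étale-locally on $S$, and they hold for $G_{S'}\hookrightarrow\overline{G}_{S'}$. This proves (1).

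For (2), take $\overline{G}$ as in \Cref{compactification_G}(2), fix a field $k$ with $\Spec k\to B$, and let $\overline{k}$ be an algebraic closure of $k$. As in the proof of \Cref{compactification_G}(2), $k(S_{\overline{k}})$ is $C_1$. The generic fiber of $G_{\overline{k}}$ is then a connected reductive group over a $C_1$ field, so $E_{\overline{k}}$ is generically trivial by Serre's Theorem 1'. By the Grothendieck--Serre conjecture, $E_{\overline{k}}$ is therefore Zariski-locally trivial on $S_{\overline{k}}$.

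It follows that $\overline{E}_{\overline{k}}$ is Zariski-locally on $S_{\overline{k}}$ isomorphic to $\overline{G}_{\overline{k}}$. We know $\overline{G}_{\overline{k}}$ is a projective scheme over $S_{\overline{k}}$, so $\overline{E}_{\overline{k}}$ is a scheme, being glued from schemes along opens, and it is proper and smooth. A smooth proper scheme over $\overline{k}$ of dimension at most... need not be projective in general, so we do not argue this way. Instead we show projectivity of $\overline{E}_{\overline{k}}$ over $S_{\overline{k}}$ using an ample line bundle on $\overline{G}_{\overline{k}}$ that is invariant under left translation. Replacing a relatively ample $L$ by the norm or determinant of its $G$-linearization, after passing to a power $L^{\otimes n}$ we may assume $L$ is $G$-linearized. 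This is standard for normal $G$-varieties over a base, but here care is needed; if it fails we retreat to Sumihiro-type arguments.

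The twisted line bundle $E\times^G L$ is then relatively ample on $\overline{E}_{\overline{k}}$, since ampleness is Zariski-local on the base. So $\overline{E}_{\overline{k}}$ is projective over $S_{\overline{k}}$, hence over $\overline{k}$. By descent of projectivity \cite[Corollaire~6.6.5]{ega2}, $\overline{E}_k$ is projective over $k$, and therefore over $S_k$.
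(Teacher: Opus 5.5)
Part (1) is exactly the paper's proof: $\overline{E}=E\times^G\overline{G}$, with $G$ acting on $\overline{G}$ through the left action. Your étale-descent check of representability, smoothness, properness and fiberwise density spells out what the paper asserts in one line.

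In (2) you also begin as the paper does. Since $k(S_{\bar k})$ is $C_1$, $E_{\bar k}$ is generically trivial, hence Zariski-locally trivial by Grothendieck--Serre. You are right that this alone only makes $\overline{E}_{\bar k}$ Zariski-locally isomorphic over $S_{\bar k}$ to the projective $S_{\bar k}$-scheme $\overline{G}_{\bar k}$. That does not automatically give a global relatively ample line bundle, and the paper's ``same argument as in \Cref{compactification_G}'' is terse at exactly this point.

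The gap is that the step you substitute is never established. Your conclusion rests entirely on a relatively ample line bundle $L$ on $\overline{G}_{\bar k}$ carrying a linearization for the left $G$-action. You only assert that it exists, with the fallback ``if it fails we retreat to Sumihiro-type arguments''. The mechanism you indicate is not a proof, for two reasons.
\begin{itemize}
\item You cannot take ``the norm or determinant of its $G$-linearization'' before a linearization exists.
\item The Mumford--Sumihiro theorem (some power of a line bundle on a normal variety is linearizable) is about a connected linear algebraic group over a field acting on a variety. Here $G_{\bar k}$ is a possibly non-constant reductive group scheme over the curve $S_{\bar k}$, acting relative to $S_{\bar k}$. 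You would need a relative linearization theorem and a verification of its hypotheses.
\end{itemize}

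In the paper this input comes from Nath's GIT construction, worked out in the Appendix.
\begin{itemize}
\item The trivial line bundle on the monoid $V_{\mathbf{G},\beta}$ carries a $(\mathbf{G}\times\mathbf{G})$-linearization commuting with the character linearization of $(\mathbb{G}_m)^I$ (\Cref{G_X_G_linearization}).
\item Hence $\mathcal{O}(n)$ on the GIT quotient is $(\mathbf{G}\times\mathbf{G})$-linearized, and its diagonal part factors through the adjoint group.
\item The bundle $\bigotimes_{\gamma\in\Gamma}\gamma^*\mathcal{O}(n)$ descends along the Galois cover (\Cref{line_bundle_quasi_split}).
\item It is then twisted by the $G'_{\mathrm{ad}}$-torsor (\Cref{projectivity_compactification_G}).
\end{itemize}
Twisting the result by $E$ is exactly your $E\times^G L$ step (\Cref{compactification-is-projective}). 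Once such an $L$ is available, the $C_1$/Grothendieck--Serre detour is no longer needed. Relative ampleness of $E\times^G L$ can be checked étale-locally on the base, where $E$ is already trivial.
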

\begin{proof}
    Let $\overline{G}$ be a proper $S$-algebraic space as in \Cref{compactification_G}.
    We define $\overline{E} \coloneqq E \times^G \overline{G}$, where $G$ acts on $\overline{G}$ on the
    left via the given left action extending left multiplication on $G$. Then $\overline{E}$ is a proper $S$-algebraic space, and the natural map $E \hookrightarrow \overline{E}$ is a fiberwise dense $G$-equivariant open immersion. The second part follows by the same argument as in \Cref{compactification_G}.
\end{proof}

\begin{rmk}
    If $S$ is quasi-compact and quasi-separated, the compactifications in \Cref{compactification_G,compactification_torsor} can in fact be chosen to be projective $S$-schemes; see \Cref{projectivity_compactification_G} and \Cref{compactification-is-projective}.
\end{rmk}

\section{Proof of Theorems \ref{thm:main}, \ref{thm:large-field} and \ref{thm:main-average}}\label{sec:7}

Theorems \ref{thm:main}, \ref{thm:large-field} and \ref{thm:main-average} follow from the next theorem.

\begin{thm}\label{thm:main-technical}
  Let $R$ be a Henselian local ring with residue field $\kappa$. Let $C \to \Spec R$ be a smooth projective morphism with geometrically connected fibers of pure dimension $1$. Let $G$ be a reductive group scheme over $C$  such that $\mathrm{rad}(G)$ is isotrivial, and let $E$ be a $G$-torsor over $C$ such that $E_{\kappa} \to C_{\kappa}$ is Zariski-locally trivial. There exists a finite collection of finite \'etale Henselian extensions $\{R_i/R\}_{i\in I}$ of collectively coprime degrees and a Zariski cover $(U_j)_{j\in J}$ of $C$ such that, for each $i\in I$ and $j \in J$, the base change $E_{R_i} \to C_{R_i}$ is trivial over $(U_j)_{R_i}$. In particular, the $G$-torsors $E_{R_i} \to C_{R_i}$ are Zariski-locally trivial.
  
  Suppose further that at least one of the following conditions is satisfied.    
    \begin{enumerate}
        \item[(a)] The field $\kappa$ is large.
        \item[(b)] The $C$-group $G$ is commutative (that is, a $C$-torus).
        \item[(c)] The kernel $\mu(G)$ of the central isogeny $G^{\mathrm{sc}}\times _C\mathrm{rad}(G)\to G$, where $G^{\mathrm{sc}}$ is the simply connected cover of the derived subgroup of $G$ and $\mathrm{rad}(G)$ is the radical $C$-torus of $G$, is \'etale over $C$.
    \end{enumerate}
Then $E \to C$ is Zariski-locally trivial. 
\end{thm}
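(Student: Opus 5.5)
We first prove the averaged statement by applying \Cref{cor_prop_principal_Weil} to the compactification $\overline{E}$ of \Cref{compactification_torsor}. Since $\mathrm{rad}(G)$ is isotrivial, \Cref{compactification_torsor}(2) gives a smooth proper $C$-algebraic space $\overline{E}$, smooth and proper over $R$, with $\overline{E}_\kappa\to C_\kappa$ projective. Over the open locus where $E\subset\overline{E}$, each geometric fiber of $\overline{E}$ is a smooth projective equivariant compactification of a connected reductive group. Such a fiber is separably unirational because a reductive group over an algebraically closed field is rational, hence separably rationally connected by \Cref{unirational_src} and \Cref{src_invariant_base_change}. Zariski-local triviality of $E_\kappa$ gives a rational section of $E_\kappa\to C_\kappa$, hence of $\overline{E}_\kappa\to C_\kappa$, and it extends to a section $s$. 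Take $C_\kappa^\circ=C_\kappa$, since $\overline{E}\to C$ is smooth.

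Fix a closed point $c\in C_\kappa$. Because $E_\kappa$ is Zariski-locally trivial, we may choose a section $s_c$ of $E_\kappa$ defined near $c$. Its extension to $\overline{E}_\kappa$ takes values in $E_\kappa$ on a neighbourhood of $c$. We will use only finitely many such sections. Apply \Cref{cor_prop_principal_Weil}(1) with $Z=\{c\}$, or better with $Z$ a finite set of points $c_1,\dots,c_r$, one in each closed point orbit needed. This yields finite étale Henselian $R_i$ of coprime degrees and sections $s_i$ of $\overline{E}_{R_i}\to C_{R_i}$ with $s_i(c)=s(c)\in E$. Since $E\subset\overline{E}$ is open, $s_i^{-1}(E_{R_i})$ is an open subset of $C_{R_i}$ containing $c$. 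Over it, $E_{R_i}$ has a section and is therefore trivial.

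These opens need not be independent of $i$, because each $R_i$ produces its own open. To handle this, we run the construction once for each point $c$ of a finite set chosen as follows. The images in $C$ of the opens $s_i^{-1}(E_{R_i})$ give, for each $i$, an open containing $c$ after taking the image under the finite étale map $C_{R_i}\to C$. Intersecting over the finitely many $i$ gives an open $U_c\ni c$. Its complement in $C$ has closed fiber strictly smaller, and since $C$ is proper over the local ring $R$, every closed point of $C$ lies in $C_\kappa$. A Noetherian induction on the closed fiber, combined with properness, shows that finitely many $U_c$ cover $C$. If we take products of the chosen $R_i$ across the finitely many $c$, the coprimality of degrees is preserved by choosing, for each prime, a member of degree prime to it. This gives the first assertion.

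For case (a), use \Cref{cor_prop_principal_Weil}(2), which gives $R_i=R$ directly. For case (b), $G$ is a torus, and Zariski-local triviality of $E$ is equivalent to the vanishing of the class of $E$ in $H^1_{\mathrm{Zar}}$-quotient data. The natural approach is to show that the class $[E]\in H^1(C,G)/H^1_{\mathrm{Zar}}(C,G)$ dies after each base change to $R_i$. Restriction followed by corestriction $N_{R_i/R}$ is multiplication by $\deg R_i$ on torsor classes for a commutative $G$, and corestriction preserves Zariski-locally trivial classes. Hence $[E]$ is killed by coprime integers, and so it is trivial. The technical point is to make the norm construction compatible with Zariski-local triviality. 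We would verify this locally, using the fact that a Zariski-trivial torsor on $U_{R_i}$ has Zariski-trivial norm on $U$.

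For case (c), we proceed by dévissage. We use the central extension $1\to\mu(G)\to G^{\mathrm{sc}}\times\mathrm{rad}(G)\to G\to1$, or alternatively the sequence $1\to G^{\mathrm{der}}\to G\to G/G^{\mathrm{der}}\to1$. The goal is to reduce to the semisimple case, which is \cite[Theorem 7.2]{Gille2021}, and to the torus case (b). We expect this step to be the main obstacle, namely controlling the connecting maps Zariski-locally so that local triviality of both pieces implies local triviality of $E$. The étaleness of $\mu(G)$ is what makes the semisimple input apply.
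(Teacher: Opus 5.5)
Your treatment of the averaged statement and of case (a) follows the paper's route: apply \Cref{cor_prop_principal_Weil} to $\overline{E}$ with $Z=\{c\}$, note that the lifted sections land in $E$ near $c$, and combine finitely many points. However, your construction of the uniform open $U_c$ is wrong as written. You take the images $\pi_i(s_i^{-1}(E_{R_i}))$ under $\pi_i\colon C_{R_i}\to C$ and intersect them. But $(U_c)_{R_i}=\pi_i^{-1}(U_c)$ can then be strictly larger than $s_i^{-1}(E_{R_i})$, so nothing guarantees that $E_{R_i}$ is trivial over $(U_c)_{R_i}$.

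The correct choice is $U_c=\bigcap_i\bigl(C\setminus\pi_i(C_{R_i}\setminus s_i^{-1}(E_{R_i}))\bigr)$. It is open because $\pi_i$ is finite, hence closed. It contains $c$ because $s_i$ agrees with the extension of $s_c$ on $Z_{\kappa_i}$, which is the \emph{entire} fiber $\pi_i^{-1}(c)$; you only used ``an open containing $c$''. When combining points, you should also decompose each $\bigotimes_j R_{c_j,i_j}$ into local factors. The gcd of all factor degrees divides every product $\prod_j[R_{c_j,i_j}:R]$, and these products are collectively coprime.

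This uniformity in $i$ is exactly what case (b) needs. Once $E_{R_i}$ is trivial on $(U_j)_{R_i}$ for all $i,j$, restriction--corestriction in the abelian group $H^1(U_j,G)$ gives $[R_i:R]\cdot[E|_{U_j}]=0$, hence $[E|_{U_j}]=0$. Your detour through $H^1/H^1_{\mathrm{Zar}}$ instead rests on the claim that corestriction preserves Zariski-locally trivial classes. You do not justify this, and it would itself require triviality on neighborhoods of whole fibers of $C_{R_i}\to C$.

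The genuine gap is case (c). There you name the isogeny but give no argument, and the obstacle you anticipate is real: a $G$-torsor need not lift to $G^{\mathrm{sc}}\times_C\mathrm{rad}(G)$, and a lift need not be Zariski-locally trivial on $C_\kappa$.

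The missing idea is to first treat torsors that are \emph{trivial} on $C_\kappa$, and to use that $\mu(G)$ is finite \'etale. Proper base change then gives $H^i(C,\mu(G))\cong H^i(C_\kappa,\mu(G)_\kappa)$ for $i=1,2$. This is where \'etaleness enters, not in making the semisimple input apply. Write $K_{\mathrm{sc}}$, $K_{\mathrm{rad}}$, $K_G$ for the kernels of restriction to $C_\kappa$ on $H^1$. A chase in the two cohomology sequences of the central extension shows that $K_{\mathrm{sc}}\times K_{\mathrm{rad}}\to K_G$ is surjective:
\begin{itemize}
\item injectivity on $H^2$ kills the obstruction to lifting $E$;
\item surjectivity on $H^1$ lets you modify a lift by a class in $H^1(C,\mu(G))$ so that it becomes trivial on $C_\kappa$.
\end{itemize}
Then $K_{\mathrm{rad}}$ is handled by case (b), and $K_{\mathrm{sc}}$ by \cite[Proposition~2.2(3) and Theorem~2.4]{Gille2021}. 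The image of a Zariski-locally trivial pair is Zariski-locally trivial. Finally, the passage from ``$E_\kappa$ Zariski-locally trivial'' to ``$E_\kappa$ trivial'' is the argument in the proof of \cite[Theorem~7.2]{Gille2021}.
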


\begin{proof}[Proof of \Cref{thm:main-technical}]
   Let $\overline{G}$ be a $G \times_C G$-equivariant compactification of $G$ over $C$. Let $\overline{E} \coloneqq E \times^G \overline{G}$ be the contracted product as in \Cref{compactification_torsor}. Then $\overline{E}$ is a smooth proper $C$-algebraic space containing $E$ as a fiberwise-dense open subscheme and $\overline{E}_{\kappa} \to C_{\kappa}$ is a smooth projective map of schemes. For every field extension $k/\kappa$ and every $k$-point $c \in C_{\kappa}(k)$, the $k$-variety $(\overline{E}_{\kappa})_c$ is separably rationally connected. Indeed, by \Cref{src_invariant_base_change}, in order to prove this we may assume that $k$ is algebraically closed. In this case, $G_c$ is split and the torsor $(E_{\kappa})_c$ is trivial and hence in particular rational, so that $(\overline{E}_{\kappa})_c = (\overline{G}_{\kappa})_c$ is rational and therefore separably rationally connected by \Cref{unirational_src}. Thus the map $\cl{E}_{\kappa} \to C_{\kappa}$ is a smooth projective map of schemes with separably rationally connected fibers.

    For the first assertion, let $z \in C_{\kappa}$ be a closed point and let $s_z \colon C_\kappa \dashrightarrow E_{\kappa}$ be a rational section of $E_\kappa \to C_\kappa$ defined on an open subset of $C_\kappa$ containing $z$. This section extends to a section $\overline{s}_z \colon C_\kappa \to \overline{E}_\kappa$. We apply \Cref{cor_prop_principal_Weil}(1) to $X = \overline{E}$ and $Z = \{z\}$, which yields finite \'etale extensions $\{R_{z,i}\}_{i \in I_z}$ of $R$ of collectively coprime degrees and sections $s_{z,i} \colon C_{R_{z,i}} \to \overline{E}_{R_{z,i}}$ satisfying $s_{z,i}|_{\{z\}_{\kappa_{z,i}}} = s_z|_{\{z\}_{\kappa_{z,i}}}$, where $\kappa_{z,i}$ denotes the residue field of $R_{z,i}$.
For each $i \in I_z$, the section $s_{z,i}$ trivializes the torsor
$E_{R_{z,i}} \to C_{R_{z,i}}$ over an open subset
$U_{z,i} \subset C_{R_{z,i}}$ containing the closed subset
$\{z\}_{\kappa_{z,i}}$. Set
\[
U_z \coloneqq \bigcap_{i \in I_z}
\bigl(C \setminus \pi_{z,i}(C_{R_{z,i}} \setminus U_{z,i})\bigr),
\]
where $\pi_{z,i} \colon C_{R_{z,i}} \to C$ is the natural finite \'etale
projection. Then $U_z \subset C$ is open, contains $z$, and each section
trivializes $E_{R_{z,i}}$ over $(U_z)_{R_{z,i}}$. Since $C_\kappa$ is
quasi-compact, finitely many $U_{z_1}, \dots, U_{z_m}$ cover $C_\kappa$, hence
cover $C$. For every tuple $(i^{(z_j)})_j \in \prod_j I_{z_j}$ there is a
decomposition
\[
\bigotimes_{1 \leqslant j \leqslant m} R_{z_j, i^{(z_j)}}
\cong \prod_{i} R_{i^{(z_1)}, \dots, i^{(z_m)}, i},
\]
into finite \'etale Henselian extensions of $R$. Letting $\{R_i\}$ be these
factors and $(U_j)_{1 \leqslant j \leqslant m} = (U_{z_j})_{1 \leqslant j \leqslant m}$ yields the first assertion. We now turn to the proof of the second assertion.

Assume (a). In this case, the proof of the second assertion is identical to the proof of the first assertion, using \Cref{cor_prop_principal_Weil}(2) in place of \Cref{cor_prop_principal_Weil}(1). 

Assume (b). By the first assertion, there exists a finite collection of finite \'etale extensions $\{R_i/R\}_{i\in I}$ of collectively coprime degrees and a Zariski open cover $(U_j)_{1 \leqslant j \leqslant J}$ of $C$ such that for every $i \in I$ the torsor $E_{R_i} \to C_{R_i}$ is trivial over $(U_j)_{R_i}$.
Let $\eta_j \in H^1(U_j, G)$ be the class of $E|_{U_j}$. By a restriction–corestriction argument for the abelian group $H^1(U_j,G)$, one obtains $[R_i : R] \cdot \eta_j = 0$ for all $i, j$. Since the integers $([R_i : R])_{i\in I}$ are collectively coprime, it follows that $\eta_j = 0$ for every $j$, which concludes the proof.

Finally, assume (c). First consider the case where the $G$-torsor $E \to C$ is trivial on the closed fiber of $R$. We follow the argument of \cite[Theorem~7.1]{Gille2021}. Consider the commutative diagram of pointed sets
\[
\begin{tikzcd}
H^1(C,\mu(G)) \arrow[r] \arrow[d]
& H^1(C,G^{\mathrm{sc}}) \times H^1(C,\mathrm{rad}(G)) \arrow[r] \arrow[d]
& H^1(C,G) \arrow[r] \arrow[d]
& H^2(C,\mu(G)) \arrow[d] \\
H^1(C_\kappa,\mu(G)_{\kappa}) \arrow[r]
& H^1(C_\kappa,(G^{\mathrm{sc}})_{\kappa}) \times H^1(C_\kappa,\mathrm{rad}(G)_{\kappa}) \arrow[r]
& H^1(C_\kappa,G_{\kappa}) \arrow[r]
& H^2(C_\kappa,\mu(G)_{\kappa})
\end{tikzcd}
\]
in which the rows are exact. By the proper base change theorem in \'etale cohomology \cite[XII.5.5.(iii)]{SGA4compact}, the maps $H^i(C,\mu(G)) \to H^i(C_\kappa,\mu(G))$ are isomorphisms for all $i \geqslant 0$. We set
\[
\begin{aligned}
K_{\mathrm{rad}} &\coloneqq \ker\!\big(H^1(C,\mathrm{rad}(G)) \longrightarrow H^1(C_\kappa,\mathrm{rad}(G)_\kappa)\big), \\
K_{\mathrm{sc}} &\coloneqq \ker\!\big(H^1(C,G^{\mathrm{sc}}) \longrightarrow H^1(C_\kappa,(G^{\mathrm{sc}})_{\kappa})\big), \\
K_G &\coloneqq \ker\!\big(H^1(C,G) \longrightarrow H^1(C_\kappa,G_{\kappa})\big).
\end{aligned}
\]
It follows that the natural map $K_{\mathrm{sc}} \times K_{\mathrm{rad}} \to K_G$ is surjective. We may apply case~(b) to the commutative group $\mathrm{rad}(G)$, and hence $K_{\mathrm{rad}}$ consists of Zariski-locally trivial torsors. On the other hand, $K_{\mathrm{sc}}$ consists of Zariski-locally trivial torsors by \cite[Proposition~2.2(3) and Theorem~2.4]{Gille2021}. Hence the same holds for $K_G$. Since the class of $E$ lies in $K_G$, it follows that $E$ is Zariski-locally trivial, as desired.
To prove the result in general, namely when the $G_\kappa$-torsor $E_\kappa$ is Zariski-locally trivial, the proof of \cite[Theorem~7.2]{Gille2021} applies.
\end{proof}

\begin{proof}[Proof of \Cref{cor:local-global}]
    Since $R$ is a discrete valuation ring, it is Noetherian and regular, hence geometrically unibranch. It follows that the torus $\mathrm{rad}(G)$ is isotrivial; see \cite[Expos\'e X, Th\'eor\`eme 5.16]{SGA3IInew}.  It now suffices to replace \cite[Theorem 7.2]{Gille2021} by \Cref{thm:main} in the proof of \cite[Corollary 7.8]{Gille2021}. 
\end{proof}

\appendix
\section{Projective compactification of torsors}\label{sec:appendix}

Let $S$ be a scheme, let $G$ be a reductive $S$-group whose radical torus $\mathrm{rad}(G)$ is isotrivial, and recall the definition of an equivariant compactification from \Cref{def:equivariant-compactification}. We now refine \Cref{compactification_G} by proving that $G$ admits a projective (in particular, schematic) $(G\times_SG)$-equivariant compactification. 

\begin{thm}\label{projectivity_compactification_G}
    Let $S$ be a quasi-compact quasi-separated scheme, and let $G$ be a reductive $S$-group such that the radical torus $\mathrm{rad}(G)$ is isotrivial.
    Then there exists a $(G \times_S G)$-equivariant compactification $\overline{G}$ endowed with a relatively very ample line bundle $\mathcal{L}$ equipped with a
    $(G \times_S G)$-linearization. In particular, $\overline{G}$ is projective over $S$.
\end{thm}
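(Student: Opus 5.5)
Recall how $\overline{G}$ was built in \Cref{compactification_G}. We chose a quasi-split inner form $G'$ of $G$, which is isotrivial by \Cref{iso_trivial_quasi_split_group}. We took Nath's projective $(G'\times_S G')$-equivariant compactification $\overline{G'}$ \cite[Theorem~1.2]{nath2026compactification}. Then we twisted it by a $G'_{\mathrm{ad}}$-torsor $E$ acting through conjugation.

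The plan is to follow the same route but carry a line bundle along. The problem then reduces to two tasks: (i) producing on $\overline{G'}$ a relatively very ample line bundle $\mathcal{L}'$ with a $(G'\times_S G')$-linearization whose restriction to the diagonal conjugation action descends to a $G'_{\mathrm{ad}}$-linearization; (ii) twisting $(\overline{G'},\mathcal{L}')$ by $E$.

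For (i), I would start from any relatively ample $\mathcal{L}_0$ on $\overline{G'}$. Since $S$ is quasi-compact and quasi-separated, we may work Zariski-locally and use a finite affine cover. The first step is to show that some power $\mathcal{L}_0^{\otimes n}$ admits a $(G'\times_S G')$-linearization. For this I would use the relative version of the Sumihiro/Mumford argument (as in \emph{GIT}, Ch.~1, \S3, or Thomason's equivariant resolution results). The isotriviality of $G'$ enters here: it lets us pass to a finite étale cover $S'\to S$ where $G'$ is split, and split reductive groups have Picard groups of their products with $S$ controlled by characters. The obstruction to linearizing then lives in $\mathrm{Pic}(G'\times G')$ modulo pullbacks from $S$ together with character groups, and it is killed by a power. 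After that we norm back down along $S'\to S$ by taking the norm of the line bundle, which preserves ampleness.

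To handle the central part: the center $Z(G')$ acts trivially on $\overline{G'}$ under conjugation (shown in the proof of \Cref{compactification_G}). It therefore acts on the fibers of $\mathcal{L}'$ through a character of $Z(G')$. Replacing $\mathcal{L}'$ by a suitable tensor power, or twisting the linearization by a character of $G'\times G'$ extending the inverse character on the diagonal, we arrange that the diagonal center acts trivially. The diagonal $G'$-linearization then factors through $G'_{\mathrm{ad}}$.

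For (ii), we set $\overline{G}\coloneqq E\times^{G'_{\mathrm{ad}}}\overline{G'}$ and $\mathcal{L}\coloneqq E\times^{G'_{\mathrm{ad}}}\mathcal{L}'$. Descent along the fppf (in fact étale) torsor $E$ is effective for the pair because $\mathcal{L}'$ is $G'_{\mathrm{ad}}$-linearized and relatively ample. The twisted $(G\times_S G)$-linearization comes from twisting the $(G'\times G')$-linearization, exactly as the action was twisted in \Cref{compactification_G}. Relative very ampleness is étale local on $S$, and it holds after a further tensor power because $S$ is quasi-compact. Hence $\overline{G}$ is a projective $S$-scheme.

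The main obstacle I expect is the existence of the linearization in (i) over a general quasi-compact quasi-separated base, together with the compatibility on the center. If the generic Sumihiro argument is too delicate, my fallback is to go back to Nath's construction. I expect it to be wonderful-compactification-type, built from a toroidal embedding of an isotrivial torus, so the ample bundle comes from a $(G'\times G')$-stable boundary divisor. The canonical linearization of $\mathcal{O}(\partial)$ would then be automatic, and its restriction to the diagonal center trivial.
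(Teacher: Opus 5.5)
Your overall architecture matches the paper's. You reduce to a quasi-split inner form $G'$, put on $\overline{G'}$ a relatively very ample line bundle with a $(G'\times_S G')$-linearization whose diagonal restriction factors through $G'_{\mathrm{ad}}$, and then twist by the $G'_{\mathrm{ad}}$-torsor. Step (ii) is fine. The gap is step (i), which is the whole content of the theorem.

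The Mumford/Sumihiro-type result that some power of an arbitrary ample bundle is linearizable is only proved under normality assumptions. Since $\overline{G'}\to S$ is smooth and surjective, normality here amounts to $S$ being normal, which is not assumed. Over an arbitrary quasi-compact quasi-separated base, both inputs of your sketch fail:
\begin{itemize}
\item Units on $G'\times_S G'\times_S\overline{G'}$ need not be units from $\overline{G'}$ times characters. Over $S=\operatorname{Spec} k[\epsilon]/(\epsilon^2)$, the function $1+\epsilon f(t)$ is a unit of $\mathbb{G}_{m,S}$ that is not a character.
\item $\operatorname{Pic}(G'\times_S\overline{G'})$ is not controlled by $\operatorname{Pic}(\overline{G'})$, characters and a finite group. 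Over a non-seminormal base such as the cuspidal cubic in characteristic $0$, $\operatorname{Pic}(\mathbb{G}_{m,S})/\operatorname{Pic}(S)$ contains a nonzero $\mathbb{Q}$-vector space, which no tensor power kills.
\end{itemize}
Noetherian approximation does not rescue this, because the approximating bases are arbitrary. Working Zariski-locally on a finite affine cover cannot produce the global object either: linearized bundles built on the members of a cover are unique only up to characters and automorphisms, so they need not glue. A smaller, fixable point: your central correction needs both a power and a twist, averaged over $\Gamma$. $Z(G')$ can have characters of infinite order (through $\mathrm{rad}(G')$) and characters that do not extend to $G'$ (in the semisimple case), so neither operation alone suffices.

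Your fallback is the right instinct, but you leave it as an expectation. The paper makes it work through Nath's GIT presentation rather than through boundary divisors. After splitting over $S'$,
\[
\overline{\mathbf{G}}_{S'}=\operatorname{Proj}\Bigl(\bigoplus_{n\ge 0}p_*(\mathcal{M}^{\otimes n})^{(\mathbb{G}_m)^I}\Bigr),
\]
where $\mathcal{M}$ is the trivial bundle on $V_{\mathbf{G},\beta}$ with the $(\mathbb{G}_m)^I$-linearization given by a character $\rho$. The trivial $(\mathbf{G}\times\mathbf{G})$-linearization of $\mathcal{M}$ commutes with this one, so it acts on the graded algebra of invariants and linearizes every $\mathcal{O}(n)$. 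Its diagonal restriction factors through $\mathbf{G}_{\mathrm{ad}}$ because the diagonal action on $V_{\mathbf{G},\beta}$ does, so the centre problem never arises. After Noetherian approximation, Seshadri's finite generation makes some $\mathcal{O}(n)$ relatively very ample. Then $\bigotimes_{\gamma\in\Gamma}\gamma^*\mathcal{O}(n)$ carries a $\Gamma$-descent datum compatible with the linearization and descends to $\overline{G'}$. No Picard-group or unit computation is needed, which is exactly why this works over an arbitrary base.
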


This is a consequence of the results of \cite{nath2026compactification}. We begin by recalling the construction of the compactification in \cite{nath2026compactification} and then use it to prove \Cref{projectivity_compactification_G}. Let $S$ be a scheme, let $G$ be an isotrivial reductive $S$-group, and let $S' \to S$ be a finite étale Galois cover splitting $G$, with Galois group $\Gamma$. Then $G$ is an inner form of a quasi-split reductive group $G'$ endowed with a quasi-pinning over $S$. The uniqueness of this form implies that $G'$ is itself isotrivial.
Since finite \'etale descent is effective for projective schemes, it suffices to prove the existence of the compactification for the quasi-split isotrivial group $G'$.

Let $\mathbf{G}$ denote the split form of $G'$. The idea in
\cite{nath2026compactification} is first to construct a compactification of
$\mathbf{G}_{S'}$ by means of its Vinberg monoid (see \cite[Section 2.1]{nath2026compactification}) together with a fan
$\Sigma \subset \mathbf{X}_{\bullet}(\mathbf{T})$ in the cocharacter lattice of a maximal
torus $\mathbf{T}$, assumed to be $\Gamma$-equivariant, smooth, and projective (see \cite[Lemma 3.2.1]{nath2026compactification}). This
allows one to construct an affine monoid $V_{\mathbf{G},\beta}$ whose group of units is
$(\mathbb{G}_m)^I \times \mathbf{G}_{S'}$, where $I$ denotes the set of rays of the fan
$\Sigma$. Nath then defines a $(\mathbb{G}_m)^I$-equivariant line bundle $\mathcal{M}$ on
$V_{\mathbf{G},\beta}$. The underlying line bundle is trivial, and the linearization is
determined by a character $\rho \colon (\mathbb{G}_m)^I \to \mathbb{G}_m$. That is, the character
$\rho \colon (\mathbb{G}_m)^I \to \mathbb{G}_m$ gives a map
\[
    f \colon (\mathbb{G}_m)^I \to \operatorname{Aut}(\mathbb{A}^1_{V_{\mathbf{G},\beta}}),
\]
and the $(\mathbb{G}_m)^I$-linearization is then taken to be
\begin{equation}\label{linearization_character}
    (\mathbb{G}_m)^I \times \mathbb{A}^1_{V_{\mathbf{G},\beta}}
    \to \mathbb{A}^1_{V_{\mathbf{G},\beta}},
    \quad (u, l) \mapsto f(u) \circ (\operatorname{id}, u\cdot)(l)
\end{equation}
where $(\operatorname{id}, u\cdot)\colon \mathbb{A}^1_{V_{\mathbf{G},\beta}}= \mathbb{A}^1_{S'} \times_{S'} V_{\mathbf{G},\beta} \to \mathbb{A}^1_{V_{\mathbf{G},\beta}}$ is given by $(a,v) \mapsto (a,u\cdot v)$.
We will also denote this linearization by $\rho$. The compactification is then defined as the GIT quotient
\[
    \overline{\mathbf{G}}_{S'} \coloneqq V_{\mathbf{G},\beta} /\!/_{\rho}\, (\mathbb{G}_m)^I
\]
(see \cite[Section~3.3]{nath2026compactification}). It inherits an action of $\Gamma$
such that the open immersion $\mathbf{G}_{S'} \hookrightarrow \overline{\mathbf{G}}_{S'}$
is $\Gamma$-equivariant.

\begin{lemma}\label{G_X_G_linearization}
    Let $S$ be a scheme and let $\mathbf{G}$ be a split reductive $S$-group. Let $\mathcal{L}$ be the trivial line
    bundle on $V_{\mathbf{G},\beta}$ equipped with the $(\mathbb{G}_m)^I$-linearization defined by a character $\rho\colon (\mathbb{G}_m)^I \to \mathbb{G}_m$. Then $\mathcal{L}$ can be endowed
    with a $(\mathbf{G} \times_S \mathbf{G})$-linearization commuting with the $(\mathbb{G}_m)^I$-linearization induced by $\rho$ described in \eqref{linearization_character}, such that the $\mathbf{G}$-linearization induced on $\mathcal{L}$ by the map $\mathbf{G} \to \mathbf{G}\times_S \mathbf{G}$, $g \mapsto (g,g)$ factors through a $\mathbf{G}_{\mathrm{ad}}$-linearization.
\end{lemma}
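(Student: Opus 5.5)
The plan is to make $\mathbf{G}\times_S\mathbf{G}$ act on the trivial bundle $\mathcal{L}=\mathbb{A}^1_{V_{\mathbf{G},\beta}}$ only through its action on the base $V_{\mathbf{G},\beta}$, with the identity on the fiber coordinate:
\[
(g_1,g_2)\cdot(a,v)\coloneqq (a,\, g_1 v g_2^{-1}),
\]
where $(g_1,g_2)\mapsto g_1 v g_2^{-1}$ is the $(\mathbf{G}\times_S\mathbf{G})$-action on the Vinberg-type monoid $V_{\mathbf{G},\beta}$ from \cite{nath2026compactification}. This action extends left and right multiplication on the unit group $(\mathbb{G}_m)^I\times\mathbf{G}$, through its inclusion of the $\mathbf{G}$-factor. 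Equivalently, this is the linearization given by the trivial character of $\mathbf{G}\times_S\mathbf{G}$. The cocycle condition holds because the map is an action on $V_{\mathbf{G},\beta}$ and acts trivially on the $\mathbb{A}^1$-coordinate. So it is a linearization.

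The first check is commutation with the $(\mathbb{G}_m)^I$-linearization \eqref{linearization_character}, namely $(u,(a,v))\mapsto (\rho(u)a, u\cdot v)$. On the fiber coordinate, one action multiplies by $\rho(u)$ and the other is the identity, so they commute there. On the base, we need $u\cdot(g_1vg_2^{-1})=g_1(u\cdot v)g_2^{-1}$. This holds because $(\mathbb{G}_m)^I$ acts through a central factor of the unit group of the monoid $V_{\mathbf{G},\beta}$. Concretely, both sides are multiplication inside the monoid by elements of $(\mathbb{G}_m)^I\times\mathbf{G}$, and the first factor commutes with the second. Since $V_{\mathbf{G},\beta}$ is a monoid scheme with associative multiplication, the identity holds scheme-theoretically and not merely on the dense unit group. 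If one prefers, it can also be obtained by density, since $V_{\mathbf{G},\beta}$ is flat and contains the unit group as a schematically dense open subscheme, as in the argument of \Cref{compactification_G}.

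The second check, which I expect to be the main point, is that the diagonal action factors through $\mathbf{G}_{\mathrm{ad}}$. This means the center $Z(\mathbf{G})$, embedded diagonally, must act trivially on $\mathcal{L}$. On the fiber coordinate this is automatic, because the action there is the identity. On the base, $z v z^{-1}=v$ holds on the dense open unit group $(\mathbb{G}_m)^I\times\mathbf{G}$, since $z$ is central there. It then extends to all of $V_{\mathbf{G},\beta}$ because $V_{\mathbf{G},\beta}$ is affine and flat over $S$ with the unit group schematically dense, fiberwise and after flat base change along $Z(\mathbf{G})\to S$. This is the same schematic-closure argument used in the proof of \Cref{compactification_G}.

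Since $\mathbf{G}\to\mathbf{G}_{\mathrm{ad}}$ is an fppf quotient by $Z(\mathbf{G})$, an action of $\mathbf{G}$ on which $Z(\mathbf{G})$ acts trivially descends to a $\mathbf{G}_{\mathrm{ad}}$-action. Applying this to $\mathcal{L}$ gives the required factorization of the diagonal linearization.

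The only delicate point is verifying schematic density of the unit group in $V_{\mathbf{G},\beta}$ after base change. I would take this from the construction in \cite[Section 2.1]{nath2026compactification}: $V_{\mathbf{G},\beta}$ is normal, flat, and has geometrically integral fibers, and the unit group is open in it.
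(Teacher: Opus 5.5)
Your proposal is correct and is essentially the paper's proof. You equip $\mathcal{L}$ with the trivial $(\mathbf{G}\times_S\mathbf{G})$-linearization, acting only on the base $V_{\mathbf{G},\beta}$, get commutation with \eqref{linearization_character} from the commutation of the two actions on $V_{\mathbf{G},\beta}$, and get the $\mathbf{G}_{\mathrm{ad}}$-factorization because the diagonal (conjugation) action on $V_{\mathbf{G},\beta}$ factors through $\mathbf{G}_{\mathrm{ad}}$; the only difference is that you justify these two facts about $V_{\mathbf{G},\beta}$ (via associativity and centrality of the $(\mathbb{G}_m)^I$-factor, and via schematic density of the unit group as in \Cref{compactification_G}), which the paper simply asserts.
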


\begin{proof}
    Since $\mathcal{L}$ is the trivial line bundle, we have $\mathbb{V}(\mathcal{L}) =
    \mathbb{A}^1_{V_{\mathbf{G},\beta}}$. We may therefore equip $\mathcal{L}$ with the trivial linearization
    with respect to $\mathbf{G} \times_S \mathbf{G}$, denoted $\rho_{\mathrm{triv},\mathbf{G}}$.

    Since the action of $(\mathbb{G}_m)^I$ on $\mathbb{A}^1_{V_{\mathbf{G},\beta}}$
    commutes with the action of $\mathbf{G}\times_S \mathbf{G}$, for every $S$-scheme $U$, every
    $(g_1,g_2) \in (\mathbf{G}\times_S\mathbf{G})(U)$, every $u \in (\mathbb{G}_m)^I(U)$, and every $l \in \mathbb{A}^1_{V_{\mathbf{G},\beta}}(U)$, we have
    \[
        \rho_{\mathrm{triv},\mathbf G}(g_1,g_2)\bigl((\operatorname{id}, u\cdot)(l)\bigr)
        =(\operatorname{id}, u\cdot)
        \bigl(\rho_{\mathrm{triv},\mathbf{G}}((g_1, g_2))(l)\bigr).
    \]
    Moreover, for every $(g_1, g_2) \in (\mathbf{G}\times_S\mathbf{G})(U)$,
    every $u \in (\mathbb{G}_m)^I(U)$, and every $l \in \mathbb{A}^1_{V_{\mathbf{G},\beta}}(U)$, we have
    \[
        \rho_{\mathrm{triv},\mathbf{G}}((g_1, g_2))\bigl(f(u)(l)\bigr)
        =
        f(u)\bigl(\rho_{\mathrm{triv},\mathbf{G}}((g_1, g_2))(l)\bigr).
    \]
    This yields a $\mathbf{G} \times_S \mathbf{G}$-linearization of
    $\mathcal{L}$ commuting with the $(\mathbb{G}_m)^I$-linearization induced by $\rho$ described in \eqref{linearization_character}.
    Moreover, the $\mathbf{G}$-linearization induced on $\mathcal{L}$ by the map
$\mathbf{G} \to \mathbf{G} \times_S \mathbf{G}$, $g \mapsto (g, g)$,
factors through a $\mathbf{G}_{\mathrm{ad}}$-linearization, as the action of
$\mathbf{G}$ on $V_{\mathbf{G},\beta}$ induced by the same map factors through
a $\mathbf{G}_{\mathrm{ad}}$-action.
\end{proof}

\begin{lemma}
\label{line_bundle_quasi_split}
    Let $S$ be a quasi-compact quasi-separated scheme. Let $G'$ be an isotrivial quasi-split reductive $S$-group
    equipped with a quasi-pinning (\emph{quasi-épinglage} in French). There exists a $(G'\times_S G')$-equivariant compactification
    $\overline{G'}$ endowed with a relatively very ample line bundle $\mathcal{L}'$ equipped with a
    $(G' \times_S G')$-linearization, such that the $G'$-linearization induced by the map
    \[
        G' \to G' \times_S G', \quad g \mapsto (g,g)
    \]
    factors through a $G'_{\mathrm{ad}}$-linearization.
\end{lemma}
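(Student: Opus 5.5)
The plan is to construct $\overline{G'}$ by Galois descent from the split situation over a finite étale Galois cover, descending a linearized ample line bundle along with it. Fix a finite étale Galois cover $S'\to S$ with group $\Gamma$ splitting $G'$, and let $\mathbf{G}$ be the split form, so that $G'_{S'}\cong \mathbf{G}_{S'}$. Because $G'$ carries a quasi-pinning, this isomorphism can be chosen to respect pinnings. The $\Gamma$-descent datum defining $G'$ then acts on $\mathbf{G}_{S'}$ through pinning-preserving automorphisms, i.e.\ through automorphisms of the based root datum. Take the $\Gamma$-equivariant smooth projective fan $\Sigma$ of \cite[Lemma 3.2.1]{nath2026compactification}, which gives the Vinberg-type monoid $V_{\mathbf{G},\beta}$ with its $(\mathbb{G}_m)^I$-action. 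The Galois action permutes the rays $I$, and the construction is functorial in pinned automorphisms, so $\Gamma$ acts semilinearly on $V_{\mathbf{G},\beta}$ compatibly with the $(\mathbb{G}_m)^I$- and $(\mathbf{G}\times\mathbf{G})$-actions.

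The first step is to choose the character $\rho$ so that it is $\Gamma$-invariant, i.e.\ fixed under the permutation action of $\Gamma$ on $I$. If $\rho$ is the character used by Nath to obtain an ample GIT polarization, then averaging (for instance the product $\prod_{\gamma\in\Gamma}\gamma^*\rho$) should still give a character in the ample cone. The ample cone is convex and $\Gamma$-stable, and the corresponding piecewise-linear function on $\Sigma$ is $\Gamma$-invariant, so the GIT quotient is unchanged. With this choice, $\overline{\mathbf{G}}_{S'}=V_{\mathbf{G},\beta}/\!/_\rho(\mathbb{G}_m)^I$ carries the line bundle $\mathcal{L}_{S'}$ descended from the trivial bundle twisted by $\rho$, i.e.\ the tautological $\mathcal{O}(1)$ of the Proj. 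Replacing $\rho$ by a power, $\mathcal{L}_{S'}$ is relatively very ample.

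Next I apply \Cref{G_X_G_linearization}: the trivial $(\mathbf{G}\times\mathbf{G})$-linearization commutes with the $\rho$-linearization, so it descends to a $(\mathbf{G}\times\mathbf{G})_{S'}$-linearization of $\mathcal{O}(1)$ on the GIT quotient. Its diagonal restriction factors through $\mathbf{G}_{\mathrm{ad}}$. The same trivial linearization is $\Gamma$-compatible, because $\Gamma$ acts on the trivial bundle only via its action on $V_{\mathbf{G},\beta}$ and $\rho$ is $\Gamma$-invariant. So $(\overline{\mathbf{G}}_{S'},\mathcal{L}_{S'})$, with its linearization, carries a descent datum relative to $S'/S$ that is compatible with the twisted $\Gamma$-action defining $G'\times_S G'$.

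Finally I descend. Effectivity of finite étale (fpqc) descent for quasi-projective schemes with a descent datum on a relatively ample line bundle gives a projective $S$-scheme $\overline{G'}$ with relatively very ample $\mathcal{L}'$. Quasi-compactness and quasi-separatedness of $S$ are used here and for the relative Proj. The linearization also descends, since the action and linearization maps are morphisms of descended objects whose compatibility can be checked after the faithfully flat base change $S'\to S$. The same applies to the factorization of the diagonal linearization through $G'_{\mathrm{ad}}$, using that $\mathbf{G}_{\mathrm{ad}}$ descends to $G'_{\mathrm{ad}}$. The smoothness and fiberwise density of $G'\subset\overline{G'}$ also descend.

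I expect the main obstacle to be the $\Gamma$-invariance of the polarization: one must check that Nath's ampleness criterion for $\rho$ (strict convexity of the associated support function on $\Sigma$) survives replacing $\rho$ by its $\Gamma$-orbit product. One must also check that $\Gamma$ acts on the pair (trivial bundle, $\rho$-linearization) without a nontrivial cocycle twist. I would first verify both directly from the explicit formula \eqref{linearization_character}.
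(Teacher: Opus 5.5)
Your proposal is correct and shares the paper's architecture. You do Galois descent along a splitting cover $S'\to S$ of Nath's quotient $V_{\mathbf{G},\beta}/\!/_{\rho}(\mathbb{G}_m)^I$. The trivial $(\mathbf{G}\times\mathbf{G})$-linearization of \Cref{G_X_G_linearization} passes to the tautological bundle and descends together with the diagonal factorization through the adjoint group.

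Where you differ is in how the polarization is made $\Gamma$-invariant.
\begin{itemize}
\item The paper keeps Nath's $\rho$, takes $\mathcal{O}(n)$ on $\overline{\mathbf{G}}_{S'}$ for $n$ sufficiently divisible, and passes to the norm $\mathcal{L}'_\Gamma=\bigotimes_{\gamma\in\Gamma}\gamma^*\mathcal{O}(n)$. Its descent datum (permutation of the factors) and its $\Gamma$-equivariant linearization come for free. This needs only that $\Gamma$ acts on the quotient, which is quoted from Nath along with effectivity of the descent. No analysis of the GIT chamber of $\rho$ is involved.
\item You instead average $\rho$ upstairs. This reduces compatibility of $\Gamma$ with both linearizations to a one-line check on the trivial bundle over $V_{\mathbf{G},\beta}$. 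It also lets effectivity follow from the $\Gamma$-linearized ample bundle itself.
\end{itemize}

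The price of your route is the check you flag: that $\prod_\gamma\gamma^*\rho$ has the same semistable locus as $\rho$. That check does go through. $\Gamma$-stability of $V_{\mathbf{G},\beta}^{ss}(\rho)$ is exactly what is needed for $\Gamma$ to act on the quotient at all, and it makes $\rho$ and every $\gamma^*\rho$ GIT-equivalent. GIT-equivalence classes of characters for a torus acting on an affine scheme are relatively open convex cones, so the product lies in the same class. Up to passing to multiples, the two constructions then yield the same polarization.

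One step you leave implicit is that ``replacing $\rho$ by a power'' gives a relatively very ample bundle. This uses finite generation of the invariant algebra. The paper obtains it from Seshadri's theorem after reducing to Noetherian $S$ by absolute Noetherian approximation.
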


\begin{proof}
By absolute Noetherian approximation \cite[Proposition 01ZA]{stacks-project}, we may assume that $S$ is Noetherian.

Let $S' \to S$ be a finite \'etale Galois cover with Galois group $\Gamma$ splitting $G'$. Let $\mathbf{G}$ be the split reductive $S$-group corresponding to $G'$. By descent, this induces an action of $\Gamma$ on $\mathbf{G}_{S'}$, which extends to $\overline{\mathbf{G}}_{S'}$ by \cite[Section~3.3]{nath2026compactification}, making the $(\mathbf{G}_{S'} \times_{S'} \mathbf{G}_{S'})$-equivariant open immersion
\[
\mathbf{G}_{S'} \hookrightarrow \overline{\mathbf{G}}_{S'}
\]
compatible with the action of $\Gamma$. Let $p \colon V_{\mathbf{G}_{S'},\beta} \to S'$ be the structural morphism. We have
\[
    \overline{\mathbf{G}}_{S'}
    =
    \operatorname{Proj}\!\left(
        \bigoplus_{n \ge 0}
        p_*\bigl( \mathcal{M}^{\otimes n}\bigr)^{(\mathbb{G}_m)^I}
    \right).
\]
By \Cref{G_X_G_linearization}, there exists a $(\mathbf{G}_{S'} \times_{S'} \mathbf{G}_{S'})$-linearization of $\mathcal{M}$ commuting with the $(\mathbb{G}_m)^I$-linearization. Since this linearization commutes with the $(\mathbb{G}_m)^I$-linearization, it induces an action of $\mathbf{G}_{S'} \times_{S'} \mathbf{G}_{S'}$ on
\[
    \mathcal{A} \coloneqq \bigoplus_{n \ge 0}
    p_*\bigl( \mathcal{M}^{\otimes n}\bigr)^{(\mathbb{G}_m)^I},
\]
and therefore, for every $n \in \mathbb{Z}$, the tautological sheaf $\mathcal{O}(n)$ is endowed with a $(\mathbf{G}_{S'} \times_{S'} \mathbf{G}_{S'})$-linearization. Moreover, by the last point of \Cref{G_X_G_linearization}, these linearizations can be chosen so that the $\mathbf{G}_{S'}$-linearization induced by the morphism
\[
    \mathbf{G}_{S'} \to \mathbf{G}_{S'} \times_{S'} \mathbf{G}_{S'}, \quad g \mapsto (g,g)
\]
factors through a $(\mathbf{G}_{S'})_{\mathrm{ad}}$-linearization.
It follows that, for every $n \in \mathbb{Z}$ and every $\gamma \in \Gamma$, $\gamma^*\mathcal{O}(n)$ on $\overline{\mathbf{G}}_{S'}$ is also endowed with a $(\mathbf{G}_{S'} \times_{S'} \mathbf{G}_{S'})$-linearization.

By \cite[Theorem 2(i)]{Seshadri1977GeometricReductivity}, the graded $\mathcal{O}_{S'}$-algebra
$\mathcal{A}$
is of finite type. Hence, for $n$ sufficiently divisible, $\mathcal{O}(n)$ is a relatively very ample line bundle.
Setting
\[
    \mathcal{L}'_{\Gamma} \coloneqq \bigotimes_{\gamma \in \Gamma} \gamma^*\mathcal{O}(n),
\]
we obtain a line bundle on $\overline{\mathbf{G}}_{S'}$ equipped with a $\Gamma$-descent datum and a $\Gamma$-equivariant $(\mathbf{G}_{S'} \times_{S'} \mathbf{G}_{S'})$-linearization, with the additional property that the induced $\mathbf{G}_{S'}$-linearization via
$g \mapsto (g,g)$ factors through a $(\mathbf{G}_{S'})_{\mathrm{ad}}$-linearization. By \cite[Section~3.3]{nath2026compactification}, the descent datum on $\overline{\mathbf{G}}_{S'}$ is effective and defines a $(G' \times_S G')$-compactification $\overline{G'}$, which is a projective $S$-scheme. The $(\mathbf{G}_{S'} \times_{S'} \mathbf{G}_{S'})$-equivariant open immersion
$\mathbf{G}_{S'} \hookrightarrow \overline{\mathbf{G}}_{S'}$ moreover descends to a $G' \times_S G'$-equivariant open immersion
$G' \hookrightarrow \overline{G'}$. Finally, since $\overline{G'}$ is obtained by descent along the $\Gamma$-action, the line bundle $\mathcal{L}'_{\Gamma}$ descends to a line bundle $\mathcal{L}'$ on $\overline{G'}$ equipped with a $(G' \times_S G')$-linearization such that the $G'$-linearization induced by the map
$G' \to G' \times_S G', \quad g \mapsto (g,g)$
factors through a $G'_{\mathrm{ad}}$-linearization. Since relative very ampleness is fppf-local on the base, $\mathcal{L}'$ is relatively very ample.
\end{proof}

\begin{proof}[Proof of \Cref{projectivity_compactification_G}]
Let $G'$ be a quasi-split group equipped with a quasi-pinning such that $G$ is an
inner form of $G'$. Let $P$ be the $G'_{\mathrm{ad}}$-torsor over $S$ such that
$G \cong P \times^{G'_{\mathrm{ad}}} G'$, where $G'_{\mathrm{ad}}$ acts on $G'$ by conjugation. Then, by the proof of \Cref{compactification_G},
the group $G'$ is isotrivial.
Define $\overline{G} \coloneqq P \times^{G'_{\mathrm{ad}}} \overline{G'}$, where
$\overline{G'}$ is the compactification constructed in \Cref{line_bundle_quasi_split}.
This defines a proper $S$-algebraic space.
Since the $G'$-linearization of $\mathcal{L}'$ factors through $G'_{\mathrm{ad}}$, we may twist the line bundle $\mathcal{L}'$ of
\Cref{line_bundle_quasi_split} by $P$. This defines a relatively very ample line bundle on $\overline{G}$ equipped with a $G \times_S G$-linearization. In particular, by \cite[Lemma 03D4]{stacks-project}, $\overline{G}$ is a scheme.
Since $\overline{G}$ is proper over $S$ and carries a relatively very ample line bundle, it follows that $\overline{G}$ is a projective $S$-scheme.
\end{proof}

As a consequence of \Cref{projectivity_compactification_G}, we may refine \Cref{compactification_torsor} as follows.

\begin{cor}\label{compactification-is-projective}
    Let $S$ be a quasi-compact quasi-separated scheme, and let $G$ be a reductive $S$-group scheme with isotrivial
    radical torus. Let $E$ be a $G$-torsor over $S$. Then there exists a $G$-equivariant compactification $\overline{E}$ of $E$ endowed with a relatively very ample line bundle $\mathcal{L}_{E}$ equipped with a
    $G$-linearization for the right action of $G$ on $\overline{E}$. In particular,
    $\overline{E}$ is a projective $S$-scheme.
\end{cor}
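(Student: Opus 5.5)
The approach is the same as for \Cref{compactification_torsor}, now applied to the projective compactification of \Cref{projectivity_compactification_G}. Let $\overline{G}$ be the $(G\times_S G)$-equivariant compactification given there, with relatively very ample line bundle $\mathcal{L}$ carrying a $(G\times_S G)$-linearization. I will set
\[
\overline{E}\coloneqq E\times^{G}\overline{G},
\]
where $G$ acts on $\overline{G}$ through the first (left) factor. The second factor of $G\times_S G$ commutes with the first, so it descends to a right $G$-action on $\overline{E}$. As in \Cref{compactification_torsor}, $\overline{E}$ is a smooth proper $S$-algebraic space, and $E=E\times^G G\hookrightarrow \overline{E}$ is a $G$-equivariant open immersion. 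Fiberwise density can be checked fppf-locally on $S$, and after a trivializing cover $S'\to S$ of $E$ it becomes the density of $G_{S'}$ in $\overline{G}_{S'}$.

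Next I twist the line bundle. The left $G$-linearization of $\mathcal{L}$ gives $\mathbb{V}(\mathcal{L})\to\overline{G}$ the structure of a $G$-equivariant morphism, so I can form
\[
\mathcal{L}_E\coloneqq E\times^G\mathbb{V}(\mathcal{L})\to E\times^G\overline{G}=\overline{E},
\]
which is a line bundle on $\overline{E}$, by fppf descent of the line bundle $\mathrm{pr}_2^*\mathcal{L}$ on $E\times_S\overline{G}$ along the $G$-torsor $E\times_S\overline{G}\to\overline{E}$. The right $G$-linearization of $\mathcal{L}$ commutes with the left one, since it comes from a $(G\times_S G)$-linearization. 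It therefore descends to a $G$-linearization of $\mathcal{L}_E$ for the right action on $\overline{E}$.

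The remaining point is that $\mathcal{L}_E$ is relatively very ample over $S$. Relative very ampleness is fppf-local on the base, as already used at the end of the proof of \Cref{line_bundle_quasi_split}. After base change to an fppf cover $S'\to S$ trivializing $E$, say $E_{S'}\cong G_{S'}$, we get $(\overline{E}_{S'},(\mathcal{L}_E)_{S'})\cong(\overline{G}_{S'},\mathcal{L}_{S'})$, and $\mathcal{L}_{S'}$ is relatively very ample. By \cite[Lemma 03D4]{stacks-project}, $\overline{E}$ is then a scheme, and being proper with a relatively very ample line bundle, it is a projective $S$-scheme; here I use that $S$ is quasi-compact and quasi-separated, exactly as in the proof of \Cref{projectivity_compactification_G}.

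I expect the main obstacle to be bookkeeping rather than substance. One has to check that the descended trivialization isomorphism really identifies $\mathcal{L}_E$ with $\mathcal{L}$ after the base change. This works because the identification $E_{S'}\cong G_{S'}$ is an isomorphism of left $G$-torsors and $\mathcal{L}$ is $G$-linearized, so it holds up to the choice of trivialization. One also has to make sure that fppf-local very ampleness is being applied to an algebraic space which is a priori not known to be a scheme. If that step is delicate, my fallback is to first show that $\overline{E}$ is a scheme by descending the projective embedding $\overline{G}\hookrightarrow\mathbb{P}(\pi_*\mathcal{L})$. That embedding is $G$-equivariant, so $\overline{E}$ embeds as a closed subspace of the projective bundle $\mathbb{P}(E\times^G\pi_*\mathcal{L})$ over $S$.
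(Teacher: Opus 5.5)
Your proposal is correct and is essentially the paper's proof: it also sets $\overline{E}=E\times^G\overline{G}$ with $G$ acting through the first factor, twists the $(G\times_S G)$-linearized relatively very ample bundle $\mathcal{L}$ by $E$, descends the commuting linearization of the second factor, and concludes as in the proof of \Cref{projectivity_compactification_G}. Your extra details (the fppf-local check after trivializing $E$, and the fallback of descending the equivariant embedding into $\mathbb{P}(\pi_*\mathcal{L})$) just spell out what the paper compresses into ``by the same argument''.
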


\begin{proof}
Let $\overline{G}$ be a $(G \times_S G)$-equivariant compactification of
$G$ given by \Cref{projectivity_compactification_G}. In particular,
$\overline{G}$ is a projective $S$-scheme, and it is equipped with a
line bundle $\mathcal{L}$ endowed with a $(G \times_S G)$-linearization.
Let $\overline{E} \coloneqq E \times^{G} \overline{G}$ be the contracted
product, where $G$ acts on $\overline{G}$ through the first factor
$G \times_S \{e_G\}$ of $G \times_S G$. By the same argument as in the
proof of \Cref{projectivity_compactification_G}, descent yields a
$S$-relatively very ample line bundle on $\overline{E}$, equipped with a
$G$-linearization for the residual action of the second factor
$\{e_G\} \times_S G$.
\end{proof}

\section*{Acknowledgments}

We thank Paolo Bravi, Jean-Louis Colliot-Th\'el\`ene, Philippe Gille, J\'anos Koll\'ar, Alena Pirutka, Jason Starr, and Zhiyu Tian for helpful comments.

\newcommand{\etalchar}[1]{$^{#1}$}


\begin{thebibliography}{CTOP02}


\bibitem[AK03]{AraujoKollar2003}
Carolina Araujo and J\'anos Koll\'ar.
\newblock Rational curves on varieties.
\newblock In {\em Higher dimensional varieties and rational points ({B}udapest, 2001)}, volume~12 of {\em Bolyai Soc. Math. Stud.}, pages 13--68. Springer, Berlin, 2003.

\bibitem[Aok06a]{aoki2006hom}
Masao Aoki.
\newblock Hom stacks.
\newblock {\em Manuscripta Math.}, 119(1):37--56, 2006.

\bibitem[Aok06b]{aoki2006hom-erratum}
Masao Aoki.
\newblock Erratum: ``{H}om stacks'' [{M}anuscripta {M}ath. {\bf 119} (2006), no. 1, 37--56.
\newblock {\em Manuscripta Math.}, 121(1):135, 2006.

\bibitem[BL94]{beauville1994conformal}
Arnaud Beauville and Yves Laszlo.
\newblock Conformal blocks and generalized theta functions.
\newblock {\em Comm. Math. Phys.}, 164(2):385--419, 1994.

\bibitem[CTOP02]{ColliotTheleneOjangurenParimala2002}
Jean-Louis Colliot-Th{\'e}l{\`e}ne, Manuel Ojanguren, and Raman Parimala.
\newblock Quadratic forms over fraction fields of two-dimensional {H}enselian rings and {B}rauer groups of related schemes.
\newblock In {\em Algebra, Arithmetic and Geometry, {P}art {I}, {II} ({M}umbai, 2000)}, volume~16 of {\em Tata Inst. Fund. Res. Stud. Math.}, pages 185--217. Tata Inst. Fund. Res., Bombay, 2002.

\bibitem[CTPS12]{colliot2012patching}
Jean-Louis Colliot-Th\'el\`ene, Raman Parimala, and Venapally Suresh.
\newblock Patching and local-global principles for homogeneous spaces over function fields of {$p$}-adic curves.
\newblock {\em Comment. Math. Helv.}, 87(4):1011--1033, 2012.

\bibitem[CTS21]{colliot2021brauer}
Jean-Louis Colliot-Th\'el\`ene and Alexei~N. Skorobogatov.
\newblock {\em The {B}rauer-{G}rothendieck group}, volume~71 of {\em Ergebnisse der Mathematik und ihrer Grenzgebiete. 3. Folge.}.
\newblock Springer, Cham, 2021.

\bibitem[Deb11]{Debarre2010GAeL}
Olivier Debarre.
\newblock Rational curves on algebraic varieties.
\newblock \url{https://www.math.ens.psl.eu/~debarre/NotesGAEL.pdf}, 2011.
\newblock Lecture notes for the GAeL XVIII conference, Coimbra, Portugal, June 6--11, 2010, and for the Semaine sp\'eciale Master de math\'ematiques, Universit\'e de Strasbourg, May 2--6, 2011.

\bibitem[DG70]{SGA3}
M.~Demazure and A.~Grothendieck, editors.
\newblock {\em Schémas en groupes}, volumes 151--153 of {\em Lecture Notes in Mathematics}.
\newblock Springer-Verlag, Berlin, 1970.
\newblock Séminaire de géométrie algébrique du Bois Marie 1962--64 (SGA 3).

\bibitem[DS95]{drinfeld1995b-structures}
V.~G. Drinfeld and Carlos Simpson.
\newblock {$B$}-structures on {$G$}-bundles and local triviality.
\newblock {\em Math. Res. Lett.}, 2(6):823--829, 1995.

\bibitem[FGI{\etalchar{+}}05]{fantechi2005fundamental}
Barbara Fantechi, Lothar G\"ottsche, Luc Illusie, Steven~L. Kleiman, Nitin Nitsure, and Angelo Vistoli.
\newblock {\em Fundamental algebraic geometry}, volume 123 of {\em Mathematical Surveys and Monographs}.
\newblock American Mathematical Society, Providence, RI, 2005.
\newblock Grothendieck's FGA explained.

\bibitem[FP15]{fedorov2015proof}
Roman Fedorov and Ivan Panin.
\newblock A proof of the {G}rothendieck-{S}erre conjecture on principal bundles over regular local rings containing infinite fields.
\newblock {\em Publ. Math. Inst. Hautes \'Etudes Sci.}, 122:169--193, 2015.

\bibitem[GD61]{ega2}
Alexander Grothendieck and Jean Dieudonn{\'e}.
\newblock {\em {\'E}l{\'e}ments de g{\'e}om{\'e}trie alg{\'e}brique {II}}, volume~8 of {\em Publications {M}ath{\'e}matiques}.
\newblock Institut des {H}autes {\'E}tudes {S}cientifiques, 1961.

\bibitem[GHS03]{graber2003families}
Tom Graber, Joe Harris, and Jason Starr.
\newblock Families of rationally connected varieties.
\newblock {\em J. Amer. Math. Soc.}, 16(1):57--67, 2003.

\bibitem[GLL13]{gabber2013index}
Ofer Gabber, Qing Liu, and Dino Lorenzini.
\newblock The index of an algebraic variety.
\newblock {\em Invent. Math.}, 192(3):567--626, 2013.

\bibitem[GPS21]{Gille2021}
P.~Gille, R.~Parimala, and V.~Suresh.
\newblock Local triviality for $G$-torsors.
\newblock {\em Mathematische Annalen}, 380(1):539--567, 2021.

\bibitem[Gro57]{grothendieck1957classification}
A.~Grothendieck.
\newblock Sur la classification des fibr\'es holomorphes sur la sph\`ere de {R}iemann.
\newblock {\em Amer. J. Math.}, 79:121--138, 1957.

\bibitem[GW23]{GortzWedhornalgebraic}
U.~G\"ortz and T.~Wedhorn.
\newblock {\em Algebraic geometry {II}: {C}ohomology of schemes---with examples and exercises}.
\newblock Springer Studium Mathematik---Master. Springer Spektrum, Wiesbaden, [2023] \copyright 2023.

\bibitem[Har77]{Hartshorne1977algebraic}
Robin Hartshorne.
\newblock {\em Algebraic geometry}, volume No. 52 of {\em Graduate Texts in Mathematics}.
\newblock Springer-Verlag, New York-Heidelberg, 1977.

\bibitem[Har10]{hartshorne2010deformation}
Robin Hartshorne.
\newblock {\em Deformation theory}, volume 257 of {\em Graduate Texts in Mathematics}.
\newblock Springer, New York, 2010.

\bibitem[Hei10]{heinloth2010uniformization}
Jochen Heinloth.
\newblock Uniformization of $\mathcal{G}$-bundles.
\newblock {\em Math. Ann.}, 347(3):499--528, 2010.

\bibitem[HHK09]{harbater2009applications}
David Harbater, Julia Hartmann, and Daniel Krashen.
\newblock Applications of patching to quadratic forms and central simple algebras.
\newblock {\em Invent. Math.}, 178(2):231--263, 2009.

\bibitem[HM82]{hazewinkel1982short}
Michiel Hazewinkel and Clyde~F. Martin.
\newblock A short elementary proof of {G}rothendieck's theorem on algebraic vectorbundles over the projective line.
\newblock {\em J. Pure Appl. Algebra}, 25(2):207--211, 1982.

\bibitem[Kol96]{kollar1996rational}
J.~Koll\'ar.
\newblock {\em Rational curves on algebraic varieties}, volume~32 of {\em Ergebnisse der Mathematik und ihrer Grenzgebiete. 3. Folge. A Series of Modern Surveys in Mathematics [Results in Mathematics and Related Areas. 3rd Series. A Series of Modern Surveys in Mathematics]}.
\newblock Springer-Verlag, Berlin, 1996.

\bibitem[Kol04]{Kollar2004Specialisation}
J\'anos Koll\'ar.
\newblock Specialization of zero cycles.
\newblock {\em Publ. Res. Inst. Math. Sci.}, 40(3):689--708, 2004.

\bibitem[Nat26]{nath2026compactification}
Ayan Nath.
\newblock Compactification of reductive group schemes.
\newblock {\em arXiv:2601.22462}, 2026.

\bibitem[Ols06a]{olsson2006deformation}
Martin~C. Olsson.
\newblock Deformation theory of representable morphisms of algebraic stacks.
\newblock {\em Math. Z.}, 253(1):25--62, 2006.

\bibitem[Ols06b]{Olsson2006underline}
Martin~C. Olsson.
\newblock {$\underline {\rm Hom}$}-stacks and restriction of scalars.
\newblock {\em Duke Math. J.}, 134(1):139--164, 2006.

\bibitem[Pop14]{pop2014little}
Florian Pop.
\newblock Little survey on large fields---old \& new.
\newblock In {\em Valuation theory in interaction}, EMS Ser. Congr. Rep., pages 432--463. Eur. Math. Soc., Z\"urich, 2014.

\bibitem[Ryd10]{Rydh2010Submersions}
David Rydh.
\newblock Submersions and effective descent of \'etale morphisms.
\newblock {\em Bull. Soc. Math. France}, 138(2):181--230, 2010.

\bibitem[Ser97]{Serre1997GaloisCohomology}
Jean-Pierre Serre.
\newblock {\em Galois Cohomology}.
\newblock Springer Monographs in Mathematics. Springer Berlin Heidelberg, Berlin, Heidelberg, first edition, 1997.
\newblock Originally published as a monograph.

\bibitem[SGA3$_{\mathrm{II}}$]{SGA3IInew}
Philippe Gille and Patrick Polo, editors.
\newblock {\em Sch\'emas en groupes ({SGA} 3). Tome II. Groupes de
  type multiplicatif, et structure des sch\'emas en groupes g\'en\'eraux}.
\newblock S\'eminaire de G\'eom\'etrie Alg\'ebrique du Bois Marie
  1962--64. A seminar directed by Michel Demazure and Alexander
  Grothendieck with the collaboration of Michael Artin, Jean-\'Etienne
  Bertin, Pierre Gabriel, Michel Raynaud, and Jean-Pierre Serre.
\newblock Revised and annotated edition of the 1970 French original, 2009.
\newblock \url{https://webusers.imj-prg.fr/~patrick.polo/SGA3/}.

\bibitem[SGA4$_{\mathrm{III}}$]{SGA4compact}
A.~Grothendieck, M.~Artin, and J.-L. Verdier, editors.
\newblock {\em Séminaire de Géométrie Algébrique du Bois Marie 1963--64: Théorie des topos et cohomologie étale des schémas}, volume 305 of {\em Lecture Notes in Mathematics}.
\newblock Springer, 1973.

\bibitem[Ses77]{Seshadri1977GeometricReductivity}
C.~S. Seshadri.
\newblock Geometric reductivity over arbitrary bases.
\newblock {\em Advances in Mathematics}, 26(3):225--274, 1977.

\bibitem[{Sta}18]{stacks-project}
The {Stacks Project Authors}.
\newblock \textit{Stacks Project}.
\newblock \url{https://stacks.math.columbia.edu}, 2018.

\bibitem[Sta23]{starr2023local}
Jason Starr.
\newblock Local triviality of torsors for relative reductive groups.
\newblock MathOverflow, \url{https://mathoverflow.net/q/460963}, 2023.
\newblock Version: 2023-12-24.

\bibitem[Zhu17]{zhu2017introduction}
Xinwen Zhu.
\newblock An introduction to affine {G}rassmannians and the geometric {S}atake equivalence.
\newblock In {\em Geometry of moduli spaces and representation theory}, volume~24 of {\em IAS/Park City Math. Ser.}, pages 59--154. Amer. Math. Soc., Providence, RI, 2017.

\end{thebibliography}
\end{document}